\documentclass[11pt,a4paper,
]{amsart}                 
\usepackage[T1]{fontenc}             
\usepackage[utf8]{inputenc}          \usepackage[english]{babel}       
\usepackage{graphicx}                      %
\usepackage[font=small]{quoting}            %
\usepackage{caption}  
\usepackage[colorlinks=true,linkcolor=blue]{hyperref}
\usepackage{amsmath}
\DeclareMathOperator{\Vol}{Vol}
\usepackage[top=3.5cm, bottom=4cm, right=2.5cm, left=2.5cm]{geometry}

\usepackage{verbatim}
\usepackage{color}
\usepackage{picture}
\usepackage{amsmath,amsthm,amsfonts,amssymb}
\usepackage{comment}
\usepackage[colorlinks=true]{hyperref}
\hypersetup{
	colorlinks=true,%
	citecolor=red,%
	filecolor=black,%
	linkcolor=blue,%
	urlcolor=blue
}

\allowdisplaybreaks

\newdimen\AAdi%
\newbox\AAbo%
\def\AAk#1#2{\s_etbox\AAbo=\hbox{#2}\AAdi=\wd\AAbo\kern#1\AAdi{}}%
\def\AAr#1#2#3{\s_etbox\AAbo=\hbox{#2}\AAdi=\ht\AAbo\raise#1\AAdi\hbox{#3}}%
\font\tenmsb=msbm10 at 12pt
\font\sevenmsb=msbm7 at 8pt
\font\fivemsb=msbm5 at 6pt
\newfam\msbfam
\textfont\msbfam=\tenmsb
\scriptfont\msbfam=\sevenmsb
\scriptscriptfont\msbfam=\fivemsb
\def\Bbb#1{{\tenmsb\fam\msbfam#1}}

\newtheorem{thm}{Theorem}[section] 
 
\newtheorem{lem}[thm]{Lemma} 
 
\newtheorem{cor}[thm]{Corollary} 
\newtheorem{corollary}[thm]{Corollary} 
\newtheorem{prop}[thm]{Proposition} 
\newtheorem{proposition}[thm]{Proposition}

    \newtheorem{rem}[thm]{Remark} 
		\newtheorem{remark}[thm]{Remark} 
\theoremstyle{remark}

\theoremstyle{definition}

\newcommand{\beq}{\begin{equation}}
\newcommand{\eeq}{\end{equation}}
\newcommand{\beqr}{\begin{eqnarray}}
\newcommand{\eeqr}{\end{eqnarray}}
\newcommand{\ba}{\begin{array}}
\newcommand{\ea}{\end{array}}

\begin{document}

\newtheorem{pro}{Proposition}
\newtheorem{defi}{Definition}
\newcommand{\noi}{\noindent}
\newcommand{\dis}{\displaystyle}
\newcommand{\mint}{-\!\!\!\!\!\!\int}

\def \bx{\hspace{2.5mm}\rule{2.5mm}{2.5mm}} \def \vs{\vspace*{0.2cm}}
\def\hs{\hspace*{0.6cm}}
\def \ds{\displaystyle}
\def \p{\partial}
\def \O{\Omega}
\def \o{\omega}
\def \b{\beta}
\def \m{\mu}
\def \l{\lambda}
\def\L{\Lambda}
\def \ul{u_\lambda}
\def \D{\Delta}
\def \d{\delta}
\def \k{\kappa}
\def \s{\sigma}
\def \e{\epsilon}
\def \a{\alpha}
\def \tf{\tilde{f}}
\def\cqfd{%
\mbox{ }%
\nolinebreak%
\hfill%
\rule{2mm} {2mm}%
\medbreak%
\par%
}
\def \pr {\noindent {\it Proof.} }
\def \rmk {\noindent {\it Remark} }
\def \esp {\hspace{4mm}}
\def \dsp {\hspace{2mm}}
\def \ssp {\hspace{1mm}}

\def \u{u_+^{p^*}}
\def \ui{(u_+)^{p^*+1}}
\def \ul{(u^k)_+^{p^*}}
\def \energy{\int_{\R^n}\u }
\def \sk{\s_k}
\def \mo{\mu_k}
\def\cal{\mathcal}
\def \I{{\cal I}}
\def \J{{\cal J}}
\def \K{{\cal K}}
\def \OM{\overline{M}} 
\def\CC {{\mathcal{C}}}

\def\fk{{{\cal F}}_k}
\def\M1{{{\cal M}}_1}
\def\Fk{{\cal F}_k}
\def\Fl{{\cal F}_l}
\def\FF{\cal F}
\def\Gk{{\Gamma_k^+}}
\def\n{\nabla}
\def\uuu{{\n ^2 u+du\otimes du-\frac {|\n u|^2} 2 g_0+S_{g_0}}}
\def\uuug{{\n ^2 u+du\otimes du-\frac {|\n u|^2} 2 g+S_{g}}}
\def\sku{\sk\left(\uuu\right)}
\def\qed{\cqfd}
\def\vvv{{\frac{\n ^2 v} v -\frac {|\n v|^2} {2v^2} g_0+S_{g_0}}}
\def\vvs{{\frac{\n ^2 \tilde v} {\tilde v}
 -\frac {|\n \tilde v|^2} {2\tilde v^2} g_{S^n}+S_{g_{S^n}}}}
\def\skv{\sk\left(\vvv\right)}
\def\tr{\hbox{tr}\,}
\def\pO{\partial \Omega}
\def\dist{\hbox{dist}}
\def\RR{\Bbb R}\def\R{\Bbb R}
\def\C{\Bbb C}
\def\B{\Bbb B}
\def\N{\Bbb N}
\def\Q{\Bbb Q}
\def\Z{\Bbb Z}
\def\PP{\Bbb P}
\def\EE{\Bbb E}
\def\F{\Bbb F}
\def\G{\Bbb G}
\def\H{\Bbb H}
\def\S{\mathbb{S}} 

\def\lcf{{locally conformally flat} }

\def\circledwedge{\setbox0=\hbox{$\bigcirc$}\relax \mathbin {\hbox
to0pt{\raise.5pt\hbox to\wd0{\hfil $\wedge$\hfil}\hss}\box0 }}

\def\sss{\frac{\s_2}{\s_1}}

\def\mathscr{\mathcal}

\newcommand{\eq}[1]{\begin{equation}\allowdisplaybreaks\begin{alignedat}{2} #1 \end{alignedat}\end{equation}}

\def\rho{\epsilon}

\numberwithin{equation} {section}

\date{}

\title{Optimal geometric  inequalities and fully nonlinear conformal flows }
\keywords{sharp Sobolev inequalities, conformal flows, total $\sigma_k$ curvature,  second derivative estimates, parabolicity}

\author{Yuxin Ge}
\address{Institut de Math\'ematiques de Toulouse,\\
Universit\'e Paul Sabatier,\\
118, route de Narbonne,\\
31062 Toulouse Cedex, France}
\email{yge@math.univ-toulouse.fr}

\author{Guofang Wang}
\address{ Albert-Ludwigs-Universit\"at Freiburg,
Mathematisches Institut,
Eckerstr. 1,
D-79104 Freiburg, Germany}
\email{guofang.wang@math.uni-freiburg.de}

\author{Wei Wei}
\address{School of Mathematics, Nanjing University, Nanjing 210093, P.R. China}
\email{wei\_wei@nju.edu.cn}

\begin{abstract}
We establish sharp Sobolev-type geometric inequalities on $\mathbb{S}^n$ involving the total $\sigma_k$-curvatures $\int_{\mathbb{S}^n}\sigma_k(g)\,dv_g$. These results extend the optimal inequalities of Guan--Wang~\cite{GWDuke} from the cone $\mathcal{C}_k$ to the strictly larger cone $\mathcal{C}_{k-1}$, thereby enlarging the range of admissible conformal metrics.
Our approach is variational and is implemented through a fully nonlinear conformal flow. Working in $\mathcal{C}_{k-1}$ introduces substantial analytic difficulties; in particular, one must obtain $C^2$ a priori estimates while simultaneously verifying that the flow remains parabolic. We resolve these issues via a carefully designed test function and by applying the maximum principle to the maximal eigenvalue of the Hessian matrix. As applications, we solve two open problems in dimensions 3 and 4. Finally, we give examples to show that these inequalities cannot be extended to $\mathcal{C}_{k-2}$.
\end{abstract}

\maketitle
\tableofcontents
\section{Introduction}

The classical Sobolev inequality on $\mathbb{S}^n$ states that for $n\ge 3$
\eq{\label{Sobolev_0}
\int_{\mathbb{S}^n}\left(|\nabla u|^2+\frac{n(n-2)}{4}u^2\right)\,dv_{g_{\mathbb{S}^n}}
\ge \frac{n(n-2)}{4}\,\omega_n^{\frac{2}{n}}
\left(\int_{\mathbb{S}^n}u^{\frac{2n}{n-2}}\,dv_{g_{\mathbb{S}^n}}\right)^{\frac{n-2}{n}}.
}
Equivalently, for any conformal metric $g\in[g_{\mathbb{S}^n}]$,
\eq{\label{Soboleb_1}
\frac{\int_{\mathbb{S}^n} R_g\,dv_g}{\bigl(\mathrm{Vol}(g)\bigr)^{\frac{n-2}{n}}}
\ge
\frac{\int_{\mathbb{S}^n} R_{g_{\mathbb{S}^n}}\,dv_{g_{\mathbb{S}^n}}}{\bigl(\mathrm{Vol}(g_{\mathbb{S}^n})\bigr)^{\frac{n-2}{n}}}
= n(n-1)\,\omega_n^{\frac{2}{n}}.
}
Here, $g_{\mathbb{S}^n}$ denotes the round metric, and $\omega_n=\mathrm{Vol}(\mathbb{S}^n,g_{\mathbb{S}^n})$. Geometrically, \eqref{Soboleb_1} is a sharp isoperimetric statement: among conformal metrics with fixed volume, the round metric minimizes the total scalar curvature.

Motivated by this perspective, we study sharp inequalities for the total $\sigma_k$-curvatures. Let $S_g$ be the Schouten tensor,
\[
S_g:=\frac{1}{n-2}\left(\mathrm{Ric}_g-\frac{R_g}{2(n-1)}\,g\right).
\]
Following \cite{ViacDuke}, define
\[
\sigma_k(g):=\sigma_k\bigl(g^{-1}S_g\bigr),
\]
where $\sigma_k$ is the $k$th elementary symmetric function of the eigenvalues $\lambda=(\lambda_1,\dots,\lambda_n)$,
\[
\sigma_k(\lambda)=\sum_{1\le i_1<\cdots<i_k\le n}\lambda_{i_1}\cdots\lambda_{i_k}.
\]
Since $\sigma_1(g)=\frac{1}{2(n-1)}R_g$, the $\sigma_k$-curvatures extend scalar curvature.

We introduce the positivity cones
\[
\mathcal C_k:=\{g:\ \sigma_j(g)>0\ \text{for all }1\le j\le k\},
\qquad
\mathcal C_k[g_0]:=\{g\in[g_0]:\ \sigma_j(g)>0\ \text{for all }1\le j\le k\}.
\]
By convention, $\sigma_0\equiv 1$ and $\mathcal C_0[g_0]=[g_0]$. The total $\sigma_k$-curvature functional is
\eq{\label{eq:Fk-intro}
\Fk(g):=\int_M \sigma_k(g)\,dv_g,
}
where $M$ is the underlying manifold (in this paper, $M=\mathbb{S}^n$ or is a locally conformally flat manifold). In particular, $\mathcal{F}_0(g)=\mathrm{Vol}(g)$. In terms of the $\sigma_k$-notation, the classical Sobolev inequality can be written as
\[
\frac{\int_{\mathbb{S}^n}\sigma_1(g)\,dv_g}{\left(\int_{\mathbb{S}^n}\sigma_0(g)\,dv_g\right)^{\frac{n-2}{n}}}
\ge
\frac{\int_{\mathbb{S}^n} \sigma_1(g_{\mathbb{S}^n})\,dv_{g_{\mathbb{S}^n}}}{\left(\int_{\mathbb{S}^n}\sigma_0(g_{\mathbb{S}^n})\,dv_{g_{\mathbb{S}^n}}\right)^{\frac{n-2}{n}}},
\qquad g\in \mathcal C_0[g_{\mathbb{S}^n}].
\]

A natural problem, generalizing \eqref{Soboleb_1}, is to find sharp inequalities relating $\Fk$ and $\Fl$ for $n\ge k>l\ge 0$. In the cone $\mathcal C_k$ this was largely resolved by Guan--Wang~\cite{GWDuke}; see also \cite{GWCrelle,GVW,GLWJDG,CYCPAM,BVCVPDE} and the discussion at the end of the introduction.

The main goal of this paper is to prove that these sharp inequalities remain valid in the strictly larger cone $\mathcal C_{k-1}$, thereby substantially enlarging the admissible class of conformal metrics.

\begin{thm}
\label{thm:generalized Sobolev inequality-high dim}
Let $0\le l<k\le n$ and let $g\in \mathcal{C}_{k-1}[g_{\mathbb{S}^n}]$ on $(\mathbb{S}^{n},g_{\mathbb{S}^{n}})$. Then:
\begin{enumerate}
\item[\textup{(1)}] \textup{(Sobolev inequality)} If $2k<n$, then
\[
\frac{\int_{\mathbb{S}^n} \sigma_k(g)\,dv_{g}}{\left(\int_{\mathbb{S}^n} \sigma_{l}(g)\,dv_{g}\right)^{\frac{n-2k}{n-2l}}}
\ge
\frac{\int_{\mathbb{S}^n} \sigma_k(g_{\mathbb{S}^n})\,dv_{g_{\mathbb{S}^n}}}{\left(\int_{\mathbb{S}^n} \sigma_{l}(g_{\mathbb{S}^n})\,dv_{g_{\mathbb{S}^n}}\right)^{\frac{n-2k}{n-2l}}}.
\]

\item[\textup{(2)}] \textup{(Moser--Trudinger inequality)} If $2k=n$, then
\[
\mathscr{E}_{n/2}(g)
\ge
\frac{1}{n-2l}\, C_{MT}\left(
\log \int_{\mathbb{S}^n} \sigma_{l}(g)\,dv_g-\log \int_{\mathbb{S}^n}\sigma_{l}(g_{\mathbb{S}^n})\,dv_{g_{\mathbb{S}^n}}
\right),
\]
where $C_{MT}=\frac{\omega_n}{2^{n/2}}\binom{n}{n/2}$ is the sharp constant (achieved by $g=g_{\mathbb{S}^n}$) and
\[
\mathscr{E}_{n/2}(g):=-\int_0^1\int_{\mathbb{S}^n}\sigma_{n/2}(g_t)\,u\,dv_{g_t}\,dt,
\qquad g_t:=e^{-2tu}g_{\mathbb{S}^n}.
\]

\item[\textup{(3)}] \textup{(Reverse inequalities)} If $2k>n$, then:
\begin{enumerate}
\item[\textup{(3.1)}] if $2l\neq n$, the reverse Sobolev inequality holds,
\[
\frac{\int_{\mathbb{S}^n} \sigma_k(g)\,dv_{g}}{\left(\int_{\mathbb{S}^n} \sigma_{l}(g)\,dv_{g}\right)^{\frac{n-2k}{n-2l}}}
\le
\frac{\int_{\mathbb{S}^n} \sigma_k(g_{\mathbb{S}^n})\,dv_{g_{\mathbb{S}^n}}}{\left(\int_{\mathbb{S}^n} \sigma_{l}(g_{\mathbb{S}^n})\,dv_{g_{\mathbb{S}^n}}\right)^{\frac{n-2k}{n-2l}}};
\]
\item[\textup{(3.2)}] if $2l=n$, the reverse Moser--Trudinger inequality holds {  for metric $g$ with  $\int_{\mathbb{S}^n} \sigma_k(g) dv_g>0$}:
\[
\mathscr{E}_{n/2}(g)
\le
-\frac{1}{2k-n}\, C_{MT}\left(
\log \int_{\mathbb{S}^n} \sigma_{k}(g)\,dv_g-\log \int_{\mathbb{S}^n}\sigma_{k}(g_{\mathbb{S}^n})\,dv_{g_{\mathbb{S}^n}}
\right),
\]
or, equivalently, $$e^{-(2k-n)\mathscr{E}_{n/2}(g)/C_{MT}}\ge \frac{\int_{\mathbb{S}^n} \sigma_k(g)dv_g}{\int_{\mathbb{S}^n}\sigma_k(g_{\mathbb{S}^n})\,dv_{g_{\mathbb{S}^n}}}.$$
\end{enumerate}
\end{enumerate}
Moreover, equality holds in each case if and only if $g$ has constant sectional curvature.
\end{thm}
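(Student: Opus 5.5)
We prove the theorem variationally, by running a fully nonlinear conformal flow inside $\mathcal C_{k-1}[g_{\mathbb{S}^n}]$ along which the quotient functional is monotone, and showing that the flow converges to a round metric.

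\textbf{Reduction to consecutive indices.} It suffices to treat the pair $(k,k-1)$ and then chain the inequalities. Once the consecutive inequality is known on $\mathcal C_{k-1}$, the case $(k,l)$ with $l<k-1$ follows by composing the inequalities for $(k,k-1),(k-1,k-2),\dots,(l+1,l)$. This uses $\mathcal C_{k-1}\subset\mathcal C_{j}$ for $j\le k-1$, and for the lower pairs the Guan--Wang inequalities in $\mathcal C_j$ apply. The exponents compose correctly because
\[
\frac{n-2k}{n-2(k-1)}\cdot\frac{n-2(k-1)}{n-2(k-2)}\cdots=\frac{n-2k}{n-2l}.
\]
When a factor $n-2j$ vanishes, that link of the chain is replaced by the Moser--Trudinger functional $\mathscr E_{n/2}$, using the usual fact that $\frac{d}{dt}\mathscr E_{n/2}(g_t)$ is $\int\sigma_{n/2}(g_t)\,u_t\,dv_{g_t}$. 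The sign bookkeeping in the chain then produces the Sobolev, Moser--Trudinger or reverse versions in cases (1)--(3).

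\textbf{The flow.} Write $g=e^{-2u}g_{\mathbb{S}^n}$ and run
\[
\partial_t u=\frac{1}{n-2k}\Big(\log\frac{\sigma_k(g)}{\sigma_{k-1}(g)}-\log r(t)\Big),
\]
or the quotient flow $\partial_t u=\frac{\sigma_k}{\sigma_{k-1}}-r(t)$, where $r(t)$ is chosen so that $\int\sigma_{k-1}\,dv_g$ is preserved. On the locally conformally flat sphere the variational identity $\frac{d}{dt}\mathcal F_j=(n-2j)\int\sigma_j\,u_t\,dv_g$ holds. Combined with Newton--Maclaurin, $\sigma_k\sigma_{k-2}\le c\,\sigma_{k-1}^2$, which is valid in $\mathcal C_{k-1}$, it shows that $\mathcal F_k/\mathcal F_{k-1}^{(n-2k)/(n-2k+2)}$ is nonincreasing.

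The difficulty is that $\sigma_k$ may change sign in $\mathcal C_{k-1}$. For that reason the operator $\sigma_k/\sigma_{k-1}$ is used rather than $\sigma_k^{1/k}$: it is elliptic and concave on the whole of $\Gamma_{k-1}^+$. We must also show that $\mathcal C_{k-1}$ is preserved along the flow. The plan is to apply the maximum principle to the evolution of $\sigma_{k-1}$ (or of $\sigma_{k-1}/\sigma_{k-2}$), using the parabolic structure.

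\textbf{A priori estimates and convergence.} On the sphere, the Kazdan--Warner/Obata-type identity rules out bubbling, after normalizing by conformal transformations via the standard center-of-mass trick. The $C^0$ and $C^1$ bounds then come from the gradient estimates for locally conformally flat metrics known in $\Gamma_{k-1}^+$.

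The main obstacle is the $C^2$ estimate together with keeping the flow uniformly parabolic, that is, keeping $\lambda(S_g)$ at a positive distance from $\partial\Gamma_{k-1}^+$. Here I would apply the maximum principle to
\[
\log\lambda_{\max}(\nabla^2u)+\alpha|\nabla u|^2+\beta u ,
\]
where $\lambda_{\max}$ is the largest eigenvalue. The largest eigenvalue is handled by the perturbation argument, and the concavity of $\sigma_k/\sigma_{k-1}$ absorbs the third-order terms. A bound on $\lambda_{\max}$, combined with membership in $\Gamma_{k-1}^+$, bounds all eigenvalues and hence gives uniform parabolicity. Krylov--Safonov then yields higher regularity, so the flow converges to a metric with $\sigma_k/\sigma_{k-1}$ constant. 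By the Obata-type classification on $\mathbb{S}^n$ (Viaclovsky/Li--Li), this metric is round, which gives the inequality.

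\textbf{Equality.} In the equality case the monotone quantity is constant along the flow, so $\partial_t u\equiv$ const. Hence $\sigma_k/\sigma_{k-1}$ is constant from the start, and the classification forces $g$ to have constant sectional curvature.
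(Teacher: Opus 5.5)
Your reduction to consecutive indices is the same as the paper's iteration. The flow step for the pair $(k,k-1)$ on $\mathcal C_{k-1}$, however, has a genuine gap: neither flow you propose works there. The logarithmic flow is undefined wherever $\sigma_k(g)\le 0$, and $\mathcal C_{k-1}$ allows this. The quotient flow $u_t=\sigma_k/\sigma_{k-1}-r$ is only degenerately parabolic on $\Gamma_{k-1}^+$. The degeneracy occurs in the interior of the cone, not only at its boundary, since
$\partial_{\lambda_i}(\sigma_k/\sigma_{k-1})=\bigl(\sigma_{k-1}(\lambda\mid i)^2-\sigma_k(\lambda\mid i)\sigma_{k-2}(\lambda\mid i)\bigr)/\sigma_{k-1}(\lambda)^2$
vanishes at, e.g., $\lambda=(1,0,\dots,0)\in\Gamma_1^+$ for $k=2$, or $\lambda=(a,1,0,\dots,0)\in\Gamma_2^+$ with $a>0$ for $k=3$. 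These are points where $\sigma_k=0$, and nothing keeps the flow away from them. Consequently:
\begin{itemize}
\item your claim that a bound on $\lambda_{\max}$ plus a positive distance from $\partial\Gamma_{k-1}^+$ gives uniform parabolicity is false;
\item short-time existence from an arbitrary $g\in\mathcal C_{k-1}$ is not available;
\item your plan to preserve $\mathcal C_{k-1}$ by a maximum principle has no mechanism behind it. For instance, for $k=2$ a lower bound $\sigma_2/\sigma_1\ge -C$ is compatible with $\lambda\to 0\in\partial\Gamma_1^+$.
\end{itemize}

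The missing idea is the paper's $\epsilon$-perturbation $u_t=(\sigma_k(g)-\epsilon)/\sigma_{k-1}(g)-s_\epsilon$, with $s_\epsilon$ chosen so that $\int\sigma_{k-1}\,dv_g$ is preserved. The extra term adds $\epsilon e^{-2ku}T_{k-2}(W)/\sigma_{k-1}^2>0$ to the linearization, which gives strict parabolicity on $\Gamma_{k-1}^+$. In the maximum principle for $H=\frac{\sigma_k-\epsilon}{\sigma_{k-1}}e^{\varphi(u)}$ it also produces the good term $\epsilon(k-1)e^{2u}\sigma_{k-1}(g)^{-k/(k-1)}\sigma_{k-1}(g_0)^{1/(k-1)}$. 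Combined with Maclaurin, this yields $\sigma_{k-1}(g)\ge C_0(\epsilon)>0$.

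Because this bound degenerates as $\epsilon\to0$, the paper never lets $\epsilon\to0$ in the flow itself. For fixed small $\epsilon$ the flow converges to $g^\epsilon_\infty$ with $\sigma_k-\epsilon=s_\epsilon\sigma_{k-1}$ and $s_\epsilon>0$, so $g^\epsilon_\infty\in\mathcal C_k$. One then applies the Guan--Wang inequality in $\mathcal C_k$ to $g^\epsilon_\infty$, uses the monotonicity of $J^k_\epsilon=\int\sigma_k\,dv_g+\frac{(2k-n)\epsilon}{n}\mathrm{Vol}(g)$ and the uniform volume bound, and sends $\epsilon\to0$ only in the resulting inequality.

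This route requires $C^2$ bounds uniform in $\epsilon$, and here your sketch is also inaccurate. Concavity does not absorb the term $-4e^{2u}u_1F_1$, which is linear in third derivatives and arises from differentiating $u_t=e^{2u}F(W,u)-s_\epsilon$ twice. The paper controls it with the extra term $2\sum_{p>1}F^{ii}W_{1p,i}^2/(\lambda_1-\widetilde\lambda_p)$ coming from the Hessian of $\lambda_{\max}$, together with the critical-point condition. It must also check separately that the accumulated $\epsilon$-dependent terms have a favorable sign.
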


For $k=1$, $l=0$, and $n\ge 3$, Theorem~\ref{thm:generalized Sobolev inequality-high dim} reduces to the classical Sobolev inequality \eqref{Sobolev_0}. For $k=2$ and $l=0$, it was proved in \cite{GWCAG} for $n\ge 5$. Thus, in the sequel we focus on $k\ge 2$ and $l\ge 1$.

In low dimensions, Theorem~\ref{thm:generalized Sobolev inequality-high dim} resolves the following open questions. When $k=2$ and $n=4$, it answers a question raised in \cite{CYCPAM}.

\begin{corollary}\label{k=2andn=4}
On $(\mathbb{S}^{4},g_{\mathbb{S}^{4}})$, let $g=e^{-2u}g_{\mathbb{S}^{4}}\in \mathcal C_{1}$ with $u\in C^{\infty}(\mathbb{S}^4)$. Then
\eq{ \label{ineq_4D}
-\int_{0}^{1}\int_{\mathbb{S}^4}\sigma_{2}(g_{t})\,u\,dv_{g_{t}}\,dt
\ge \frac{1}{4} C_{MT}\log \frac{\mathrm{Vol}(g)}{\mathrm{Vol}(g_{\mathbb S^4})},
}
and
\eq{ \label{ineq_4Db}
-\int_{0}^{1}\int_{\mathbb{S}^4}\sigma_{2}(g_{t})\,u\,dv_{g_{t}}\,dt
\ge \frac{1}{2} C_{MT}\log \frac{\int_{\mathbb{S}^4} \sigma_1(g)\,dv_g}{\int_{\mathbb{S}^4} \sigma_1(g_{\mathbb{S}^4})\,dv_{g_{\mathbb{S}^4}}},
}
where $g_{t}=e^{-2tu}g_{\mathbb{S}^4}$ and $C_{MT}=\frac{3}{2}\omega_4$.
\end{corollary}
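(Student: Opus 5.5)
This corollary is the specialization of Theorem~\ref{thm:generalized Sobolev inequality-high dim}(2) to $n=4$, $k=2$, so $2k=n$ and the admissible cone is $\mathcal C_{k-1}=\mathcal C_1$. I take $l=0$ for \eqref{ineq_4D} and $l=1$ for \eqref{ineq_4Db}. In both cases the hypothesis $g=e^{-2u}g_{\mathbb{S}^4}\in\mathcal C_1[g_{\mathbb{S}^4}]$ with $u$ smooth is exactly what the theorem requires.

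First I check the constants. For $n=4$,
\[
C_{MT}=\frac{\omega_4}{2^{2}}\binom{4}{2}=\frac{6}{4}\,\omega_4=\frac32\,\omega_4,
\]
which matches the statement. The left-hand side of both inequalities is $\mathscr E_{2}(g)$ by definition, with $g_t=e^{-2tu}g_{\mathbb{S}^4}$.

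\emph{Case $l=0$.} The prefactor is $\frac{1}{n-2l}=\frac14$. Since $\sigma_0\equiv1$, we have $\int_{\mathbb{S}^4}\sigma_0(g)\,dv_g=\mathrm{Vol}(g)$ and $\int_{\mathbb{S}^4}\sigma_0(g_{\mathbb{S}^4})\,dv_{g_{\mathbb{S}^4}}=\mathrm{Vol}(g_{\mathbb{S}^4})$. Substituting these into part (2) of the theorem gives \eqref{ineq_4D}.

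\emph{Case $l=1$.} The prefactor is $\frac{1}{n-2l}=\frac12$, and substituting into part (2) gives \eqref{ineq_4Db}.

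The logarithm $\log\int_{\mathbb{S}^4}\sigma_1(g)\,dv_g$ is well defined because $g\in\mathcal C_1$ gives $\sigma_1(g)>0$ pointwise, so the integral is positive. Similarly $\mathrm{Vol}(g)>0$ trivially.

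There is no real obstacle at the level of the corollary: all the analytic difficulty, namely the fully nonlinear flow in $\mathcal C_{k-1}$, the $C^2$ estimates and the preservation of parabolicity, is contained in Theorem~\ref{thm:generalized Sobolev inequality-high dim}(2), which I take as given. The only points to verify are the constant $C_{MT}$ and the positivity of the quantities inside the logarithms, both done above.
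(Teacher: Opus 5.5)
Your proposal is correct and matches the paper: the corollary is Theorem~\ref{thm:generalized Sobolev inequality-high dim}(2) specialized to $n=4$, $k=2$, with $l=1$ giving \eqref{ineq_4Db} and $l=0$ giving \eqref{ineq_4D}, and your checks of $C_{MT}$ and the prefactors are right. The paper also notes that \eqref{ineq_4D} follows from \eqref{ineq_4Db} via the classical Sobolev inequality $\int\sigma_1(g)\,dv_g/\int\sigma_1(g_{\mathbb S^4})\,dv_{g_{\mathbb S^4}}\ge(\mathrm{Vol}(g)/\mathrm{Vol}(g_{\mathbb S^4}))^{1/2}$, which is exactly how the theorem's $l=0$ case is obtained from its $l=k-1$ case, so this is only a difference of packaging.
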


When $k=2$ and $n=3$, Theorem~\ref{thm:generalized Sobolev inequality-high dim} answers an open question in \cite{GWAdv} on the standard sphere.

\begin{corollary}\label{k=2andn=3}
On $(\mathbb{S}^{3},g_{\mathbb{S}^{3}})$, let $g\in \mathcal C_{1}[g_{\mathbb{S}^3}]$. Then
\beq \label{eq_a1}
\bigl(\mathrm{Vol}(g)\bigr)^{\frac 1 3}\int_{\mathbb{S}^3} \sigma_2(g)\, dv_g
\le
\bigl(\mathrm{Vol}(g_{\mathbb{S}^{3}})\bigr)^{\frac 1 3}\int_{\mathbb{S}^3} \sigma_2(g_{\mathbb{S}^{3}})\, dv_{g_{\mathbb{S}^{3}}}
=\frac 3 4\,\omega_3^{\frac 43},
\eeq
and
\beq \label{eq_a2}
\left(\int_{\mathbb{S}^3} \sigma_1(g)\, dv_g\right)\left(\int_{\mathbb{S}^3} \sigma_2(g)\, dv_g\right)
\le
\left(\int_{\mathbb{S}^3} \sigma_1(g_{\mathbb{S}^{3}})\, dv_{g_{\mathbb{S}^{3}}}\right)
\left(\int_{\mathbb{S}^3} \sigma_2(g_{\mathbb{S}^{3}})\, dv_{g_{\mathbb{S}^{3}}}\right)
= \frac 9 8\,\omega_3 ^2.
\eeq
\end{corollary}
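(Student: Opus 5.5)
We derive Corollary~\ref{k=2andn=3} from Theorem~\ref{thm:generalized Sobolev inequality-high dim}, part (3.1), with $n=3$, $k=2$, so $2k=4>3=n$. For $g\in\mathcal C_1[g_{\mathbb S^3}]=\mathcal C_{k-1}[g_{\mathbb S^3}]$, the theorem gives for $l\in\{0,1\}$ (both satisfy $2l\neq 3$)
\[
\frac{\int_{\mathbb{S}^3}\sigma_2(g)\,dv_g}{\left(\int_{\mathbb{S}^3}\sigma_l(g)\,dv_g\right)^{\frac{3-4}{3-2l}}}
\le
\frac{\int_{\mathbb{S}^3}\sigma_2(g_{\mathbb S^3})\,dv_{g_{\mathbb S^3}}}{\left(\int_{\mathbb{S}^3}\sigma_l(g_{\mathbb S^3})\,dv_{g_{\mathbb S^3}}\right)^{\frac{3-4}{3-2l}}}.
\]

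\emph{Case $l=0$.} Here $\int\sigma_0(g)\,dv_g=\mathrm{Vol}(g)$ and the exponent is $-\tfrac13$, so the inequality reads $\mathrm{Vol}(g)^{1/3}\int\sigma_2(g)\,dv_g\le \mathrm{Vol}(g_{\mathbb S^3})^{1/3}\int\sigma_2(g_{\mathbb S^3})\,dv_{g_{\mathbb S^3}}$. This is \eqref{eq_a1}.

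\emph{Case $l=1$.} The exponent is $\tfrac{-1}{1}=-1$, so the inequality reads $\left(\int\sigma_1\right)\left(\int\sigma_2\right)\le$ (the same product for the round metric). This is \eqref{eq_a2}.

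In both cases the denominators are positive: volume is always positive, and $\int\sigma_1(g)\,dv_g>0$ because $g\in\mathcal C_1$. This positivity is what lets us rewrite the quotients as products without reversing the inequalities.

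It remains to evaluate the constants. On the round $\mathbb S^3$ we have $\mathrm{Ric}=2g$ and $R=6$, so
\[
S=\tfrac{1}{1}\left(2g-\tfrac{6}{4}g\right)=\tfrac12 g .
\]
The eigenvalues of $g^{-1}S$ are therefore all $\tfrac12$, which gives $\sigma_1=\tfrac32$ and $\sigma_2=3\cdot\tfrac14=\tfrac34$. Hence:
\begin{itemize}
\item $\mathrm{Vol}^{1/3}\int\sigma_2=\omega_3^{1/3}\cdot\tfrac34\omega_3=\tfrac34\omega_3^{4/3}$;
\item $\left(\int\sigma_1\right)\left(\int\sigma_2\right)=\tfrac32\omega_3\cdot\tfrac34\omega_3=\tfrac98\omega_3^2$.
\end{itemize}

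There is no real obstacle: all the analytic difficulty is contained in Theorem~\ref{thm:generalized Sobolev inequality-high dim}. The only points to check are the sign of the exponent, the positivity of the denominators, and the arithmetic of the constants.
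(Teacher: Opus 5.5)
Your proof is correct and matches the paper: the corollary is exactly Theorem~\ref{thm:generalized Sobolev inequality-high dim}(3.1) with $n=3$, $k=2$, $l\in\{0,1\}$ (exponents $-\tfrac13$ and $-1$), and your computation $S_{g_{\mathbb S^3}}=\tfrac12 g_{\mathbb S^3}$, $\sigma_1=\tfrac32$, $\sigma_2=\tfrac34$ gives the stated constants. The paper also remarks that \eqref{eq_a1} follows from \eqref{eq_a2} via the classical Sobolev inequality; that is precisely how it obtains the $l=0$ case of the theorem from the $l=1$ case, so invoking the theorem directly with $l=0$, as you do, amounts to the same argument.
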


By the classical Sobolev inequality \eqref{Soboleb_1}, \eqref{ineq_4Db} implies \eqref{ineq_4D}, and \eqref{eq_a2} implies \eqref{eq_a1}.

\begin{rem}\label{rem:k2comments}
\begin{enumerate}
\item[\textup{(a)}] For $k=2$ and $n=4$, inequality \eqref{ineq_4D} answers a question raised by Chang--Yang~\cite{CYCPAM}; see the remark following Theorem~4.3 in \cite{CYCPAM}.
\item[\textup{(b)}] For $k=2$ and $n=3$, the inequalities in Corollary~\ref{k=2andn=3} hold not only on $\mathbb{S}^3$ but also on locally conformally flat $3$-manifolds, answering Conjectures 3 and 4 in \cite{GWAdv} in that setting (related to a question of Viaclovsky). It remains open whether the conformal flatness assumption can be removed.
\end{enumerate}
\end{rem}

\begin{rem}
It is natural to ask whether the inequalities above can be further extended to the larger cone $\mathcal{C}_{k-2}$. In general, they cannot.

For $k=2$, the admissibility condition $g\in\mathcal{C}_1$ is essential. When $n=4$, Chang--Yang~\cite{CYCPAM} constructed examples (large multiples of first eigenfunctions on $\mathbb{S}^4$) for which the inequality fails. When $n\ge 5$, one may take $g=e^{-2\ell x_{n+1}^2}g_{\mathbb S^n}$ (where $x_{n+1}$ is the last coordinate function), so that $\int \sigma_2(g)\,dv_g<0$ for large $\ell$. When $n=3$, the same choice yields
$\bigl(\mathrm{Vol}(g)\bigr)^{\frac 1 3}\int \sigma_2(g)\,dv_g\to +\infty$ as $\ell\to\infty$.
See the Appendix for computations. For $k\ge 3$, one expects analogous failures as well.
\end{rem}

To  state our results more precisely, assume $l\neq\frac n2$ and $k\neq\frac n2$, and recall the following Yamabe-type constants:
\[
\bar Y_{k,l}(M,[g_0]) :=
\begin{cases}
\displaystyle \inf\limits_{g\in \mathcal C_{k}[g_0]} \frac{\int_M \sigma_k(g)\, dv_g}{\left(\int_M \sigma_{l}(g)\, dv_g\right)^{\frac{n-2 k}{n-2l}}}, & \text{if } n>2k,\\
\displaystyle \sup\limits_{g\in \mathcal C_{k}[g_0]} \frac{\int_M \sigma_k(g)\, dv_g}{\left(\int_M \sigma_{l}(g)\, dv_g\right)^{\frac{n-2 k}{n-2l}}}, & \text{if } n<2k,
\end{cases}
\]
\[
Y_{k,l}(M,[g_0]) :=
\begin{cases}
\displaystyle \inf\limits_{g\in \mathcal C_{k-1}[g_0]} \frac{\int_M \sigma_k(g)\, dv_g}{\left(\int_M \sigma_{l}(g)\, dv_g\right)^{\frac{n-2 k}{n-2l}}}, & \text{if } n>2k,\\
\displaystyle \sup\limits_{g\in \mathcal C_{k-1}[g_0]} \frac{\int_M \sigma_k(g)\, dv_g}{\left(\int_M \sigma_{l}(g)\, dv_g\right)^{\frac{n-2 k}{n-2l}}}, & \text{if } n<2k.
\end{cases}
\]
It is clear that $Y_{k,l} \le \bar Y_{k,l}$ for $k<\frac n2$, while $Y_{k,l} \ge \bar Y_{k,l}$ for $k>\frac n2$, and strict inequality can occur. The constant $Y_{k,k-1}(M,[g_0])$ was introduced in \cite{GLWJDG}. By definition, if $\mathcal C_{k}[g_0]=\emptyset$, then $\bar Y_{k,k-1}(M,[g_0])=+\infty$ for $k<\frac{n}{2}$ and $\bar Y_{k,k-1}(M,[g_0])=-\infty$ for $k>\frac{n}{2}$.

One of our main theorems is the following.
\begin{thm}\label{thm:minimizer in weak cone}
Let $n\ge 3$ and $k\ge 2$, and let $(M^{n},g_{0})$ be a  closed locally conformally flat manifold with $g_0\in \mathcal C_{k-1}$. Assume that $k\notin\left\{\frac n2,\frac n2+1\right\}$. If $Y_{k,k-1}(M,[g_0])>0$, then it is achieved by a conformal metric $g\in \mathcal{C}_k[g_0]$, and
\[
Y_{k,k-1}(M,[g_0])=\bar Y_{k,k-1}(M,[g_0])>0.
\]
Moreover, $Y_{k,k-1}(M,[g_0])>0$ if and only if $\mathcal C_{k}[g_0]\neq\emptyset$.
\end{thm}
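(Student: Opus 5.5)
We prove the theorem with a fully nonlinear conformal flow run inside $\mathcal C_{k-1}[g_0]$ that decreases (for $n>2k$) or increases (for $n<2k$) the normalized functional
\[
\mathcal Q(g)=\frac{\Fk(g)}{\bigl(\mathcal F_{k-1}(g)\bigr)^{\frac{n-2k}{n-2k+2}}} .
\]
Write $g=e^{-2u}g_0$. On a locally conformally flat manifold, $\Fk$ is variational with $\frac{d}{dt}\Fk(g_t)=(n-2k)\int \sigma_k(g)\,(-u_t)\,dv_g$. We use a flow of quotient type,
\[
u_t=\log\frac{\sigma_k(g)}{\sigma_{k-1}(g)}-\log r_k(t),
\]
with $r_k$ the ratio $\Fk/\mathcal F_{k-1}$, or a suitable power-type variant. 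It has three properties. It is parabolic exactly where $\sigma_k>0$, since the quotient $\sigma_k/\sigma_{k-1}$ is elliptic and concave on $\Gamma_k^+$. Its volume normalization keeps $\mathcal F_{k-1}$ fixed. And $\mathcal Q$ is monotone along it, by a Newton–Maclaurin-type inequality $\sigma_k\sigma_{k-2}\le c\,\sigma_{k-1}^2$ combined with Cauchy–Schwarz.

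The plan is as follows.

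Step 1 is the easy direction of the last claim. If $\mathcal C_k[g_0]\neq\emptyset$, then $\bar Y_{k,k-1}>0$: on locally conformally flat manifolds in $\mathcal C_k$ this follows from Guan–Wang type results. We must also show $Y_{k,k-1}>0$; this comes out of the argument below. Conversely, suppose $Y_{k,k-1}>0$ while $\mathcal C_k[g_0]=\emptyset$. We derive a contradiction by showing that any $g\in\mathcal C_{k-1}$ with $\Fk(g)>0$ can be pushed by the flow into $\mathcal C_k$.

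Step 2 is a minimizing sequence argument. Take $g_j\in \mathcal C_{k-1}[g_0]$ with $\mathcal Q(g_j)\to Y_{k,k-1}>0$; in particular $\Fk(g_j)>0$. Starting from $g_j$, first run an auxiliary flow, for instance a $\sigma_{k-1}$-type flow or a deformation $g\mapsto$ solution of $\sigma_k(g)=\epsilon\,\sigma_{k-1}(g)f$, that does not increase $\mathcal Q$ beyond $o(1)$ and lands in $\mathcal C_k$. Then apply the Guan–Wang existence theory in $\mathcal C_k$ (Sheng–Trudinger–Wang, Ge–Wang type results for quotient equations). This gives $Y_{k,k-1}=\bar Y_{k,k-1}$ and a minimizer $g\in\mathcal C_k[g_0]$ solving $\sigma_k/\sigma_{k-1}=\mathrm{const}$.

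Step 3 handles noncompactness. Since $k\neq n/2$, the exponent is non-degenerate. Blow-up of a minimizing sequence for $\mathcal Q$ on a locally conformally flat manifold produces a bubble, that is, the round sphere. We use local estimates of Guan–Wang type for the quotient equation, and the positive-mass/Liouville rigidity on $\mathbb S^n$ from Theorem~\ref{thm:generalized Sobolev inequality-high dim}, to show that blow-up only occurs when $(M,g_0)$ is conformal to $\mathbb S^n$. In that case the minimizer is explicit. The extra exclusion $k\neq n/2+1$ enters at the step where the exponent relation between $\Fk$ and $\mathcal F_{k-1}$ degenerates, namely $n-2(k-1)=0$.

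The main obstacle is the flow analysis inside $\mathcal C_{k-1}$. We need a priori $C^2$ estimates while simultaneously ensuring that $\sigma_k$ stays positive, or becomes positive, so that the equation remains parabolic. To handle this we would apply the maximum principle to a test function
\[
\log\lambda_{\max}\bigl(\nabla^2u+du\otimes du-\tfrac{|\nabla u|^2}{2}g_0+S_{g_0}\bigr)+\phi(|\nabla u|^2)+\psi(u),
\]
where $\lambda_{\max}$ is the maximal eigenvalue, treated via perturbation to handle non-smoothness. For the lower bound on $\sigma_k/\sigma_{k-1}$ we would use a separate maximum-principle computation on $u_t$.
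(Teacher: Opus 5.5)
Your proposal has a genuine gap at exactly the point the theorem is about. Your flow $u_t=\log\frac{\sigma_k(g)}{\sigma_{k-1}(g)}-\log r_k$ is defined and parabolic only where $\sigma_k(g)>0$, i.e.\ in $\mathcal C_k$. A minimizing sequence $g_j$ for $Y_{k,k-1}$ lies in $\mathcal C_{k-1}$ and in general has $\sigma_k(g_j)<0$ somewhere; you only know $\int\sigma_k(g_j)\,dv_{g_j}>0$. So the flow cannot even be started from $g_j$. The step meant to bridge this, ``first run an auxiliary flow \dots\ that does not increase $\mathcal Q$ beyond $o(1)$ and lands in $\mathcal C_k$'', is left as a ``for instance''. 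Yet that step amounts to $Y_{k,k-1}\ge\bar Y_{k,k-1}$, which is the whole content of the theorem.

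The missing idea is that the restriction to $\Gamma_k^+$ comes from the logarithm, not from the quotient. The quotient $\sigma_k/\sigma_{k-1}$ is concave and degenerate elliptic on the larger cone $\Gamma_{k-1}^+$ (Lemma~\ref{lem:concavity quotient}). The paper therefore runs $u_t=\frac{\sigma_k(g)-\epsilon}{\sigma_{k-1}(g)}-s_\epsilon$, see \eqref{eq:pertubed flow}:
the term $-\epsilon/\sigma_{k-1}$ gives strict parabolicity on $\mathcal C_{k-1}$; $s_\epsilon$ keeps $\int\sigma_{k-1}\,dv_g$ fixed; and $J^k_\epsilon=\int\sigma_k\,dv_g+\frac{(2k-n)\epsilon}{n}\operatorname{Vol}(g)$ is monotone.

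The limit satisfies $\sigma_k-\epsilon=s_\epsilon\sigma_{k-1}$. Positivity of $Y_{k,k-1}$ (or monotonicity of $J^k_\epsilon$ when $k>\frac n2$), together with an $\epsilon$-independent volume bound, forces $s_\epsilon>0$ for small $\epsilon$, hence $\sigma_k>\epsilon$ pointwise. This is what lands the flow in $\mathcal C_k$ without increasing the functional. Letting $\epsilon\to0$ then gives $Y_{k,k-1}\ge\bar Y_{k,k-1}$, and attainment is Guan--Wang's result for $\bar Y_{k,k-1}$.

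Accordingly, the estimates you need are not a lower bound on $\sigma_k/\sigma_{k-1}$, which is false at $t=0$. They are:
\begin{itemize}
\item $C^2$ bounds uniform in $\epsilon$, where the mixed term $u_iF_i$ (absent for the logarithmic flow) is absorbed by the extra term in the second derivative of $\lambda_{\max}$;
\item a $C^0$ bound from the preserved $\int\sigma_{k-1}\,dv_g$ and Guan--Wang's inequalities for $\sigma_{k-1}$;
\item an $\epsilon$-dependent lower bound on $\sigma_{k-1}$ that keeps the flow in $\mathcal C_{k-1}$.
\end{itemize}

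Two further steps fail. In Step~3 you use the rigidity in Theorem~\ref{thm:generalized Sobolev inequality-high dim}, but that theorem is derived from the present one (Remark~\ref{Yamabeconstantforsphere}), so the argument is circular. No blow-up analysis is needed: Ye's moving-plane argument gives the gradient bound along the flow (Lemma~\ref{lem:gradient est}).

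In Step~1, consider the implication $\mathcal C_k[g_0]\neq\emptyset\Rightarrow Y_{k,k-1}>0$. It is nontrivial for $k<\frac n2$, where $Y_{k,k-1}\le\bar Y_{k,k-1}$, so $\bar Y_{k,k-1}>0$ does not help. It cannot ``come out of the argument below'', because that argument assumes $Y_{k,k-1}>0$ from the start. The paper obtains it from the finiteness of $Y_{k,k-1}$ (Proposition~\ref{prop6.1}) and Theorem~\ref{Y_quo}. If $Y_{k,k-1}\le0$, the $\epsilon\to0$ limits of the same perturbed flow give a $C^{1,1}$ metric in $\overline{\mathcal C}_{k-1}[g_0]$ with $\sigma_k=s_0\sigma_{k-1}$ and $s_0\le0$. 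By the maximum principle this contradicts $\mathcal C_k[g_0]\neq\emptyset$.
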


When $n\ge 5$, the case $k=2$ was proved by Ge--Wang~\cite{GWCAG} using a perturbed heat flow and a technique developed in \cite{GLWJDG}. Those methods do not apply to the low-dimensional cases $n=3,4$. In fact, Theorem~1 in \cite{GWCAG} requires (when $n=3$) an additional hypothesis that $Y_{2,1}(M,[g_0])$ is bounded from above; this boundedness is automatic in dimensions $n\ge 5$. This is one of the main difficulties in dimension~$3$.

It remains open in general whether, for $n=3$, the quantities $Y_{2,1}(M,[g])$ and $Y_{2,0}(M,[g])$ are bounded from above. The analogous questions for $\bar Y_{2,1}$ and $\bar Y_{2,0}$ are also open. Theorem~\ref{thm:minimizer in weak cone} gives an affirmative answer for locally conformally flat manifolds. Recall from Remark~1.5 that there exists a sequence of conformal metrics on $\mathbb{S}^3$ with $({\rm Vol}(g_\ell))^{\frac 13}\int\sigma_2(g_\ell)\,dv_{g_\ell}\to +\infty$.

We also address the cases $Y_{k,k-1}(M,[g_0])<0$ and $Y_{k,k-1}(M,[g_0])=0$ in Section~\ref{Section: Appendix}.

\begin{rem}\label{Yamabeconstantforsphere}
When $M$ is the round sphere, $\mathcal C_k[g_{\mathbb{S}^n}]$ is non-empty for every $k$. Theorem~\ref{thm:minimizer in weak cone} therefore yields
\eq{\label{eq_a0}
Y_{k,k-1}(\mathbb{S}^n,[g_{\mathbb{S}^n}])=\bar Y_{k,k-1}(\mathbb{S}^n,[g_{\mathbb{S}^n}])>0.
}
Hence, when $k\notin \left\{\frac n2,\frac n2+1\right\}$, Theorem~\ref{thm:generalized Sobolev inequality-high dim} follows from Theorem~\ref{thm:minimizer in weak cone} together with the sharp inequalities of Guan--Wang~\cite{GWDuke}, for $l=k-1$. For general $l$ one uses an iteration to prove it.
\end{rem}

The main novelty of this paper is that the geometric inequalities involving $\s_k$ hold in the larger cone $\mathcal{C}_{k-1}$. This enlargement makes the inequalities applicable in a wider range of conformal classes and is   important  in applications. See \cite{GWW1}, \cite{frank2024sharp}, and \cite{Konig-P}. 
Our approach to prove these inequalities is to use fully nonlinear conformal flows. In establishing required  a priori estimates, we encounter several new difficulties, including issues that arose in \cite{GWAdv} but were not resolved there.

As in \cite{GWDuke}, using conformally invariant flow, there is  a crucial advantage, especially on conformally flat manifolds.  One can obtain a uniform gradient bound
\eq{\label{eq:grad-bound-intro}
|\nabla u|\le C,
}
following Ye~\cite{YeJDG}. Consequently, the main analytic task is to establish global second-derivative estimates and to verify that the flow preserves both admissibility and parabolicity. Related ideas were used in \cite{GWCrelle} for a conformal logarithmic $\sigma_k$-Yamabe flow and in \cite{GWDuke} for a conformal logarithmic quotient flow.

We begin by identifying a conformal flow suitable for metrics in the larger cone $\mathcal C_{k-1}$. Recall that in \cite{GWCrelle,GWDuke}, Guan--Wang considered the ``logarithmic'' conformal flows
\begin{equation}\label{flow_log}
-\frac{1}{2}g^{-1}\frac{\partial g}{\partial t}=\log {\sigma_k(g)}-\log r_{k}(g),
\qquad
-\frac{1}{2}g^{-1}\frac{\partial g}{\partial t}=\log \frac{\sigma_k(g)}{\sigma_{k-1}(g)}-\log r_{k,k-1}(g).
\end{equation}
Another natural candidate is
\eq{\label{flow_sigmsk}
-\frac 12 g^{-1}\frac{\partial g}{\partial t}=\sigma_k(g)^{1/k},
\qquad
-\frac 12 g^{-1}\frac{\partial g}{\partial t}=\left(\frac{\sigma_k(g)}{\sigma_{l}(g)}\right)^{\frac 1{k-l}}.
}
However, for conformal metrics lying only in $\mathcal C_{k-1}$, neither of these flows is known to be suitable (for example, admissibility or parabolicity may fail).

To access $\mathcal C_{k-1}$ we remove the logarithmic nonlinearity and introduce, for $\epsilon>0$,
\eq{\label{our flow}
 u_t=-\frac{1}{2}g^{-1}\frac{\partial g}{\partial t}=\frac{\sigma_k(g)-\epsilon}{\sigma_{k-1}(g)}-s,
}
or, equivalently,
\begin{align}
 u_{t} & =e^{2u}\frac{\sigma_{k}(W)-\epsilon e^{-2ku}}{\sigma_{k-1}(W)}-s.
\label{eq:parabolic equation}\nonumber
\end{align}
Here
\eq{\label{ew_W} W=\nabla^{2}u+\nabla u\otimes \nabla u-\frac{|\nabla u|^{2}}{2}g_{0}+S_{g_{0}}.}
The term $s$ is a non-local curvature quantity chosen so that the flow preserves $\int\sigma_{k-1}(g)\,dv_g$.
A crucial observation in \cite{GLWJDG} (see also \cite{GuanZhang,HuiskenSine}) implies that \eqref{our flow} is parabolic in $\mathcal C_{k-1}$ for $\epsilon>0$ (at least for short-time existence); see Section~\ref{subsection:modified  conformal flow} below.
The desired second derivative estimate for \eqref{our flow} presents difficulties absent in the logarithmic setting. In particular, a term of the form $u_iF_i$ appears in the maximum-principle argument (for the definition of $F$ see \eqref{expression of F}); it arises as a mixed term when differentiating \eqref{our flow} twice. In contrast, for the logarithmic flow \eqref{flow_log} this term does not occur.
This is one reason why logarithmic flow \eqref{flow_log} was used for the standard problem $\sigma_k$ -Yamabe in \cite{GWCrelle}, rather than \eqref{flow_sigmsk}. We also remark that for elliptic conformal equations this term does not appear in second-derivative estimates.

To obtain the $C^2$ estimate, we apply the maximum principle to the test function
\[
G=\lambda_{\max}(W)+\left(m+\frac12\right)|\nabla u|^2,
\]
where $\lambda_{\max}$ denotes the maximal eigenvalue of $W$ in \eqref{ew_W}. The second derivative of $\lambda_{\max}$ contains an additional favorable term compared to the second derivative of $W_{11}$; see \eqref{eq_x2}. This extra term is used to control the main unfavorable contribution from the term $u_iF_i$; see \eqref{eq:third derivatives bad term}. Similar ideas have appeared in recent work; see, for instance, \cite{TosattiWeinkoveAJM2010,STWActa2017}.

A further subtlety is that we need estimates for \eqref{our flow} that are uniform in $\epsilon>0$, and hence we must carefully track the terms dependent on $\epsilon$. We manage to show that the sum of these terms is non-negative and thus can be discarded.

To the best of our knowledge, the quotient flow
\eq{\label{our flow_2}
 u_t=-\frac{1}{2}g^{-1}\frac{\partial g}{\partial t}=\frac{\sigma_k(g)}{\sigma_{k-1}(g)},
}
and the $\sigma_k$-Yamabe flow
\eq{\label{our flow_3}
 u_t=\sigma_k(g)^{1/k},
}
have not been previously studied, even in $\mathcal C_{k}$, apart from the logarithmic flow \eqref{flow_log}. As noted above, the logarithmic structure simplifies the $C^2$ estimates. The $C^2$ estimates established here apply equally to \eqref{our flow_2} and \eqref{our flow_3}. 

To show the geometric inequalities on $\S^n$ in Theorem \ref{thm:generalized Sobolev inequality-high dim}, the flow \eqref{our flow} is enough, as mentioned in Remark \ref{Yamabeconstantforsphere}, see also Remark \ref{iteration} below. But for general locally conformally flat manifolds, the iteration method only give an (possible) non-sharp constant. See Remark \ref{iteration}.  To obtain a sharp result in this  case we may consider the  following flow \eq{\label{flow for k-l}
 -\frac{1}{2}g^{-1}\frac{\partial g}{\partial t}=\frac{\sigma_k(g)-r_{k,l}\sigma_l(g)}{\sigma_{k-1}(g)},\quad r_{k,l}=\frac{\int \sigma_k(g) dv_g}{\int \sigma_l(g) dv_g},
}
for $l\le k-1$.
We remark that when $l=k-1$, \eqref{flow for k-l} is not strictly parabolic. This is why we need to consider \eqref{our flow}  for  $l=k-1$.
With same methods in this paper, we can prove  that on a locally conformally flat manifold $(M,g_0)$, this flow converges globally to a limit, provided that $\mathcal C_{k-1}[g_0]\neq \emptyset$ and   $Y_{k,l}(M, [g_0])>0$ for $k,l\neq \frac n2$ and $0\le l<k-1$. As a consequence, we have 
\[ Y_{k,l}(M,[g_0] )=\bar{Y}_{k,l}(M,[g_0]).\]
Together with Theorem \ref{thm:minimizer in weak cone}, we have for  $k,l\neq \frac n2$ and $0\le l<k$,
\[ Y_{k,l}(M,[g_0] )=\bar{Y}_{k,l}(M,[g_0]).\]
This will be useful to determine the exact value of the Yamabe constant $Y_{k,l} $ for tori.

Before concluding the introduction, we briefly recall some related results.

\medskip

\noindent{\it Previous results on geometric inequalities for $k\ge 2$ in $\mathcal{C}_k$.}
\begin{itemize}
\item $k<\frac n2$ (\emph{Sobolev-type}). For $k=1$ this is the classical sharp Sobolev inequality recalled above. Assume now $k\ge 2$. When $l=0$, the inequality was proved in \cite{GVW} using the logarithmic $\sigma_k$ flow introduced in \cite{GWCrelle} together with compactness results for conformally flat manifolds \cite{LLCPAM2003}. For general $l<k\le \frac n2$, it was proved in \cite{GWDuke} via a logarithmic quotient flow.

\item $k=\frac n2$ (\emph{Moser--Trudinger}). When $k=1$ we have $n=2$ and $l=0$, corresponding to the classical Moser--Trudinger--Onofri inequality of Trudinger, Moser and Onofri \cite{TrudingerJMM1967,MoserIU1971,OnofriCMP1982}; see also Hong \cite{HongPAMS1986}. When $n=2k>2$ and $l=0$, Brendle--Viaclovsky and Chang--Yang introduced the critical functional $\mathcal E_{n/2}$ and proved Moser--Trudinger-type inequalities \cite{BVCVPDE,CYCPAM}. Guan--Wang obtained in \cite{GWDuke} the sharp inequality for any $l< k=\frac n2$.

\item $k>\frac n2$ (\emph{Reverse inequalities / quermassintegrals}). Guan--Wang proved in \cite{GWDuke} reverse inequalities for $\frac n2<k\le n$ and $1\le l<k$, and also stated a weaker ``quermassintegral'' inequality. A closely related inequality for $n=4$ and $k=2$ was obtained earlier by Gursky~\cite{GurskyCMP1999}. See also related work of Gursky--Viaclovsky \cite{GVJDG} and \cite{GVW}.
\end{itemize}

\

\noindent\emph{Previous results on related problems and geometric inequalities for $k\ge 2$ and in $\mathcal{C}_{k-1}$}

\medskip
\begin{itemize}
\item In the case $n=4$ and $k=2$, Chang--Gursky--Yang~\cite{CGY2002} proved that $\mathcal{C}_2\neq\emptyset$ if the Yamabe constant $Y_1$ and
$\int_M \sigma_2\bigl(g^{-1}S_g\bigr)\,dv_g$ (which is a conformal invariant when $n=4$) are positive; see also \cite{GVJDG}. When $n=3$ and $k=2$, a similar result was proved in \cite{GLWJDG} and \cite{CatinoDjali2010}. For $n\ge 5$ and $k\ge 2$, Guan--Lin--Wang proved a similar result in \cite{GLWMathZ} for locally conformally flat manifolds.

\item $k<\frac n2$ (\emph{Sobolev-type}). For $k=2$ and $l=1$, Ge--Wang proved Sobolev-type inequalities. When $l=0$, the case $k\ge 1$ was recently proved in \cite{GWW2}, as a consequence of a question raised by J.~Case.
\end{itemize}



\medskip
\noindent{\it Organization of the paper.}
Section~\ref{section:preliminary} reviews notation and basic properties of the $\sigma_k$ functions. We then introduce the conformal flow \eqref{eq:pertubed flow} and prove its basic properties. Section~\ref{estimates forsecond derivatives} establishes the second-derivative estimates for $k\ge 2$. The convergence of the flows and the proofs of the main theorems are given in Section~\ref{Section:Proof of Main theorems}. Section~\ref{Section: Appendix} sketches the proof that non-emptiness of $\mathcal C_k[g_0]$ implies $Y_{k,k-1}(M,[g_0])>0$ for $k<\frac n2$. In the Appendix we also compute examples showing that the main results generally fail in $\mathcal C_{k-2}$.

\section{Preliminaries}\label{section:preliminary}

\subsection{Elementary symmetric functions and G\aa rding cones}

Recall that
\[
\mathcal C_k[g_0]=\bigl\{g\in[g_0]\,\big|\,\lambda(g^{-1}S_g)\in\Gamma_k^+\bigr\},
\]
where $\lambda(g^{-1}S_g)$ denotes the eigenvalues of $g^{-1}S_g$ and
\begin{equation}
\Gamma_k^+=\bigl\{\Lambda=(\lambda_1,\ldots,\lambda_n)\in\mathbb R^n\,\big|\,\sigma_j(\Lambda)>0\ \text{for all }1\le j\le k\bigr\}
\end{equation}
are the G\aa rding cones.

We record several standard identities and inequalities for the elementary symmetric functions $\sigma_k$; see \cite{HuiskenSine}.

\begin{lem}\label{lem:algebra}
Let $\lambda=(\lambda_{1},\ldots,\lambda_{n})\in\mathbb{R}^{n}$ and write
$\lambda\mid i=(\lambda_1,\ldots,\lambda_{i-1},\lambda_{i+1},\ldots,\lambda_n)$.
Then
\begin{align}
\sum_{i=1}^{n}\lambda_{i}\sigma_{k}(\lambda\mid i)&=(k+1)\sigma_{k+1}(\lambda),\label{sigmaproperty1}\\
\sum_{i=1}^{n}\sigma_{k}(\lambda\mid i)&=(n-k)\sigma_{k}(\lambda),\label{sigmaproperty2}\\
\sum_{i=1}^{n}\lambda_{i}^{2}\sigma_{k}(\lambda\mid i)&=\sigma_{1}(\lambda)\sigma_{k+1}(\lambda)-(k+2)\sigma_{k+2}(\lambda),\label{eq:alge 1}\\
(n-k+1)(k+1)\sigma_{k-1}(\lambda)\sigma_{k+1}(\lambda)&\le k(n-k)\sigma_{k}^{2}(\lambda).
\label{eq:alg 2}
\end{align}
If $\lambda\in \Gamma_{k-1}^{+}$, then the Maclaurin inequalities
\[
\bigg(\frac{\sigma_{1}(\lambda)}{n}\bigg)^{k}\ge\frac{\sigma_{k}(\lambda)}{C_{n}^{k}},
\qquad
\left(\frac{\sigma_{k-1}(\lambda)}{C_{n}^{k-1}}\right)^{\frac{k}{k-1}}\ge\frac{\sigma_{k}(\lambda)}{C_{n}^{k}},
\]
hold, and moreover
\begin{equation}
\sigma_{1}(\lambda)\sigma_{k-1}(\lambda)\ge\frac{nk}{n-k+1}\sigma_{k}(\lambda).
\label{eq:alg5}
\end{equation}
\end{lem}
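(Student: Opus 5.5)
The identities \eqref{sigmaproperty1}--\eqref{eq:alge 1} will follow from the generating function $\prod_{j=1}^n(1+t\lambda_j)=\sum_k\sigma_k(\lambda)t^k$ together with the splitting $\sigma_{k+1}(\lambda)=\sigma_{k+1}(\lambda\mid i)+\lambda_i\sigma_k(\lambda\mid i)$.

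For \eqref{sigmaproperty2}, count monomials: each $k$-subset avoids exactly $n-k$ indices, so it appears $n-k$ times in $\sum_i\sigma_k(\lambda\mid i)$.

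For \eqref{sigmaproperty1}, each monomial $\lambda_{i_1}\cdots\lambda_{i_{k+1}}$ arises once for each choice of distinguished index $i\in\{i_1,\dots,i_{k+1}\}$, which gives the factor $k+1$.

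For \eqref{eq:alge 1}, substitute $\lambda_i\sigma_k(\lambda\mid i)=\sigma_{k+1}(\lambda)-\sigma_{k+1}(\lambda\mid i)$ to get
\[
\sum_i\lambda_i^2\sigma_k(\lambda\mid i)=\sigma_1\sigma_{k+1}-\sum_i\lambda_i\sigma_{k+1}(\lambda\mid i),
\]
and then apply \eqref{sigmaproperty1} with $k+1$ in place of $k$.

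Inequality \eqref{eq:alg 2} is Newton's inequality $\bigl(\sigma_k/C_n^k\bigr)^2\ge \bigl(\sigma_{k-1}/C_n^{k-1}\bigr)\bigl(\sigma_{k+1}/C_n^{k+1}\bigr)$, valid for all real $\lambda$, rewritten by clearing the binomial coefficients. I would prove it in the standard way. The polynomial $P(t)=\prod_j(t+\lambda_j)$ has only real roots, so by Rolle's theorem every derivative of $P$ has only real roots, and the same holds after the reversal $t\mapsto 1/t$ (multiplied by the appropriate power of $t$). Applying these operations repeatedly reduces $P$ to a real-rooted quadratic
\[
a t^2+2bt+c,\qquad a=\frac{\sigma_{k-1}}{C_n^{k-1}},\quad b=\frac{\sigma_k}{C_n^k},\quad c=\frac{\sigma_{k+1}}{C_n^{k+1}},
\]
and the nonnegativity of its discriminant, $b^2\ge ac$, is exactly the claim. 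Equivalently, one may simply cite \cite{HuiskenSine}.

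The main step is the set of cone inequalities for $\lambda\in\Gamma_{k-1}^+$. Here $\sigma_k$ may be negative, and if $\sigma_k\le 0$ the inequalities are trivial because the left-hand sides are positive. So assume $\sigma_k>0$, which means $\lambda\in\Gamma_k^+$.

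Write $E_j=\sigma_j/C_n^j$. Newton's inequality reads $E_j^2\ge E_{j-1}E_{j+1}$, and all $E_j>0$ for $j\le k$. Hence the ratios $E_j/E_{j-1}$ are nonincreasing in $j$ for $j\le k$, which yields the Maclaurin chain $E_1\ge E_2^{1/2}\ge\cdots\ge E_k^{1/k}$. From this chain we read off both
\[
E_1^k\ge E_k \qquad\text{and}\qquad E_{k-1}^{k/(k-1)}\ge E_k .
\]

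Finally, \eqref{eq:alg5} is $E_1E_{k-1}\ge E_k$, which follows from
\[
\frac{E_k}{E_{k-1}}\le \frac{E_1}{E_0}=E_1,
\]
a consequence of the same ratio monotonicity. Converting back to $\sigma_j$ gives the constant
\[
\frac{C_n^kC_n^0}{C_n^1C_n^{k-1}}=\frac{n-k+1}{nk},
\]
which matches \eqref{eq:alg5}.

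The only delicate point is to use positivity correctly. Newton's inequality holds for all real $\lambda$, but the monotonicity of ratios requires $E_j>0$ for $j\le k$, and this is why the case $\sigma_k\le0$ must be split off first.
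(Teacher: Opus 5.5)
Your proof is correct. The paper gives no argument of its own for this lemma; it only records these facts as standard, with a pointer to Huisken--Sinestrari, and what you wrote is the usual derivation behind that reference. You use index counting together with $\sigma_{k+1}(\lambda)=\sigma_{k+1}(\lambda\mid i)+\lambda_i\sigma_k(\lambda\mid i)$ for the identities, and Newton's inequality for arbitrary real $\lambda$ via real-rootedness and Rolle. Maclaurin and \eqref{eq:alg5} then come from the monotonicity of $E_j/E_{j-1}$.

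Splitting off the case $\sigma_k\le 0$ is exactly what extends the inequalities from $\Gamma_k^+$ to $\Gamma_{k-1}^+$. This step implicitly uses $k\ge 2$, so that $\sigma_1>0$ and $\sigma_{k-1}>0$. For $k=1$ the corresponding statements are either trivial equalities or, for the exponent $\frac{k}{k-1}$, not meaningful.
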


The next lemma is one of the key structural facts used throughout the paper.

\begin{lem}\label{lem:concavity quotient}
Let $1\le k\le n$ and $\lambda\in\Gamma_{k-1}^{+}$. Then the Hessian quotient
\[
F(\lambda)=\frac{\sigma_k(\lambda)}{\sigma_{k-1}(\lambda)}
\]
is (degenerately) elliptic and concave on $\Gamma_{k-1}^{+}$. Equivalently, for any symmetric matrix $W$ with $\lambda(W)\in\Gamma_{k-1}^{+}$, the map
$$
W\longmapsto \frac{\sigma_k(W)}{\sigma_{k-1}(W)}
$$
is concave.
\end{lem}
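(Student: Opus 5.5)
We prove ellipticity and concavity of $F=\sigma_k/\sigma_{k-1}$ on $\Gamma_{k-1}^+$ first at the level of eigenvalues, then pass to matrices by the standard principle for symmetric functions. Since $\Gamma_{k-1}^+$ is convex and $F$ is symmetric in $\lambda$, the concavity of $W\mapsto F(\lambda(W))$ on $\{\lambda(W)\in\Gamma_{k-1}^+\}$ follows once $F$ is concave as a function of $\lambda$ on the convex symmetric set $\Gamma_{k-1}^+$. This is the classical result on symmetric concave functions of eigenvalues (Davis / Ball, as used by Caffarelli--Nirenberg--Spruck). Hence the work reduces to $\lambda$.

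\textbf{Ellipticity.} We compute
\[
\frac{\partial F}{\partial\lambda_i}=\frac{\sigma_{k-1}(\lambda|i)\sigma_{k-1}(\lambda)-\sigma_k(\lambda)\sigma_{k-2}(\lambda|i)}{\sigma_{k-1}^2(\lambda)}.
\]
Use $\sigma_{k-1}(\lambda)=\lambda_i\sigma_{k-2}(\lambda|i)+\sigma_{k-1}(\lambda|i)$ and $\sigma_k(\lambda)=\lambda_i\sigma_{k-1}(\lambda|i)+\sigma_k(\lambda|i)$. The $\lambda_i$-terms cancel, and the numerator becomes
\[
\sigma_{k-1}(\lambda|i)^2-\sigma_k(\lambda|i)\,\sigma_{k-2}(\lambda|i),
\]
which is $\ge 0$ for every vector by Newton's inequality applied to $\lambda|i\in\mathbb R^{n-1}$. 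This is the weak form of \eqref{eq:alg 2}, which holds without any cone condition. So $\partial F/\partial\lambda_i\ge0$, i.e.\ degenerate ellipticity. This is precisely why only $\sigma_{k-1}>0$ (to define $F$) is needed, not $\lambda\in\Gamma_k^+$.

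\textbf{Concavity.} This is the main obstacle. The standard proof in $\Gamma_k^+$ argues via $(\sigma_k/\sigma_{k-1})$ on $\Gamma_k^+$, so here we must argue differently. We use induction on $k$ through the recursive structure. For $k=1$, $F=\sigma_1$ is linear. For $k\ge2$, write
\[
F=\frac{\sigma_k}{\sigma_{k-1}}=\frac{\sigma_1}{k}-\frac{1}{k}\cdot\frac{\sum_i\lambda_i^2\sigma_{k-2}(\lambda|i)}{\sigma_{k-1}(\lambda)},
\]
using \eqref{eq:alge 1} with $k-2$ in place of $k$. Then concavity reduces to convexity of the second term. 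Alternatively, we follow the Huisken--Sinestrari route. Show that $Q=\sigma_k/\sigma_{k-1}$ satisfies, for $\lambda\in\Gamma_{k-1}^+$ and all $\xi$,
\[
D^2Q(\xi,\xi)\le0,
\]
by expanding $D^2Q$ as a quadratic form. One writes
\[
Q(\lambda)=\lambda_1\frac{\sigma_{k-1}(\lambda|1)}{\sigma_{k-1}(\lambda)}+\frac{\sigma_k(\lambda|1)}{\sigma_{k-1}(\lambda)}
\]
and uses the inductive hypothesis that $\sigma_{k-1}/\sigma_{k-2}$ is concave on $\Gamma_{k-2}^+\supset\Gamma_{k-1}^+$ restricted to $\lambda|i$. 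I would follow the argument of Huisken--Sinestrari (Lemma 2.10 in \cite{HuiskenSine}), which proves exactly concavity of $\sigma_{k}/\sigma_{k-1}$ on $\Gamma_{k-1}^+$ by induction. The key check is that all quotients appearing in the induction, such as $\sigma_{k-1}(\lambda|i)/\sigma_{k-2}(\lambda|i)$, are evaluated at $\lambda|i\in\Gamma_{k-2}^+$. This holds because $\lambda\in\Gamma_{k-1}^+$ implies $\lambda|i\in\Gamma_{k-2}^+$.

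Finally, ellipticity at the matrix level follows from the monotonicity computed above, since the derivative of $F(\lambda(W))$ in $W$ is diagonalized with eigenvalues $\partial F/\partial\lambda_i\ge0$.
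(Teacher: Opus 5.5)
Your proposal is correct and takes the same approach as the paper, whose proof consists only of citing \cite[Theorem~2.5]{HuiskenSine}, \cite[Lemma~1]{GLWJDG} and \cite{GuanZhang}. Like the paper, you rest the concavity on the Huisken--Sinestrari theorem. Your two additions are also correct: the computation showing that the numerator of $\partial F/\partial\lambda_i$ equals $\sigma_{k-1}(\lambda|i)^2-\sigma_k(\lambda|i)\sigma_{k-2}(\lambda|i)\ge 0$ by Newton's inequality, which gives degenerate ellipticity, and the Davis/Ball transfer from eigenvalues to matrices.

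Be aware that your inductive sketches (the $\sigma_1/k$ splitting and the expansion in $\lambda_1$) do not by themselves prove concavity. That step is carried entirely by the cited result, exactly as in the paper, which points to Theorem~2.5 of \cite{HuiskenSine} rather than the lemma number you give.
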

\begin{proof}
See \cite[Theorem~2.5]{HuiskenSine}, \cite[Lemma~1]{GLWJDG}, or \cite{GuanZhang}.
\end{proof}

\begin{lem}\label{lem:Garding}
For any $\mu,\lambda\in\Gamma_{k-1}^{+}$, we have
\begin{equation}
\frac{\sigma_{k}}{\sigma_{k-1}}(\mu)\le\sum_{i}\mu_{i}\frac{\partial}{\partial\lambda_{i}}\bigg(\frac{\sigma_{k}}{\sigma_{k-1}}(\lambda)\bigg).
\label{eq:garding type}
\end{equation}
Moreover,
\begin{equation}
\sum_{i}\frac{\partial\sigma_{k-1}(\lambda)}{\partial\lambda_{i}}\mu_{i}
\ge (k-1)\frac{\sigma_{k-1}(\mu)^{\frac{1}{k-1}}}{\sigma_{k-1}(\lambda)^{\frac{1}{k-1}-1}}.
\label{eq:garding inequality}
\end{equation}
\end{lem}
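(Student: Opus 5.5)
The first inequality \eqref{eq:garding type} will come from concavity and homogeneity of the quotient. Set $F=\sigma_k/\sigma_{k-1}$. On $\Gamma_{k-1}^+$ the function $F$ is homogeneous of degree one, so Euler's identity gives
\[
\sum_i\lambda_i\,\frac{\partial F}{\partial\lambda_i}(\lambda)=F(\lambda).
\]
By Lemma~\ref{lem:concavity quotient}, $F$ is concave on the convex cone $\Gamma_{k-1}^+$. For $\mu,\lambda$ in the cone, the tangent-plane inequality therefore reads
\[
F(\mu)\le F(\lambda)+\sum_i(\mu_i-\lambda_i)\,\frac{\partial F}{\partial\lambda_i}(\lambda).
\]
Substituting Euler's identity into the right-hand side cancels $F(\lambda)$ against $\sum_i\lambda_i\,\partial F/\partial\lambda_i(\lambda)$. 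What remains is exactly \eqref{eq:garding type}. The only things to check are that $\Gamma_{k-1}^+$ is an open convex cone, which is standard from G\aa rding's theory, and that $F$ is smooth there, which holds because $\sigma_{k-1}>0$ on the cone. When $k=1$ the quotient is $\sigma_1$, which is linear, so the inequality is an equality.

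For \eqref{eq:garding inequality} I would use the classical G\aa rding argument for $f=\sigma_{k-1}^{1/(k-1)}$. This function is concave and $1$-homogeneous on $\Gamma_{k-1}^+$, the concavity being the standard G\aa rding/Caffarelli--Nirenberg--Spruck result. The same tangent-plane argument as above then gives
\[
f(\mu)\le\sum_i\mu_i\,\frac{\partial f}{\partial\lambda_i}(\lambda).
\]
I then compute the derivative of $f$:
\[
\frac{\partial f}{\partial\lambda_i}=\frac{1}{k-1}\,\sigma_{k-1}(\lambda)^{\frac1{k-1}-1}\,\frac{\partial\sigma_{k-1}}{\partial\lambda_i}.
\]
Multiplying through by $(k-1)\sigma_{k-1}(\lambda)^{1-\frac1{k-1}}$ yields
\[
\sum_i\frac{\partial\sigma_{k-1}}{\partial\lambda_i}(\lambda)\,\mu_i\ge(k-1)\,\sigma_{k-1}(\mu)^{\frac1{k-1}}\,\sigma_{k-1}(\lambda)^{1-\frac1{k-1}},
\]
which is precisely the stated form. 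The case $k=1$ is degenerate: both sides vanish, so nothing needs to be proved, and I will assume $k\ge2$ in this step.

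The main obstacle is only the concavity input. Both inequalities are formal consequences of "concave plus $1$-homogeneous on a convex cone". I would quote Lemma~\ref{lem:concavity quotient} for the quotient and the classical G\aa rding theorem for $\sigma_{k-1}^{1/(k-1)}$, as found in \cite{HuiskenSine}, rather than reprove them.
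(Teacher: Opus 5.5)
Your proof is correct and follows the paper's argument. Both inequalities are the tangent-plane inequality for the concave, degree-one homogeneous functions $\sigma_k/\sigma_{k-1}$ and $\sigma_{k-1}^{1/(k-1)}$ on the convex cone $\Gamma_{k-1}^{+}$, with Euler's identity cancelling the terms evaluated at $\lambda$. Your explicit appeal to homogeneity is the right justification for that cancellation: the paper cites \eqref{sigmaproperty2}, but the identity actually needed is the Euler relation, i.e.\ \eqref{sigmaproperty1}.
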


\begin{proof}
Both inequalities follow from concavity.

By concavity of $\sigma_{k}/\sigma_{k-1}$ on $\Gamma_{k-1}^{+}$ and \eqref{sigmaproperty2},
\begin{align*}
\frac{\sigma_{k}}{\sigma_{k-1}}(\mu)
&\le  \frac{\sigma_{k}}{\sigma_{k-1}}(\lambda)+\sum_{i}(\mu_{i}-\lambda_{i})\frac{\partial}{\partial\lambda_{i}}\bigg(\frac{\sigma_{k}}{\sigma_{k-1}}(\lambda)\bigg)
=  \sum_{i}\mu_{i}\frac{\partial}{\partial\lambda_{i}}\bigg(\frac{\sigma_{k}}{\sigma_{k-1}}(\lambda)\bigg).
\end{align*}

Again, by concavity of $(\sigma_{k-1})^{\frac{1}{k-1}}$,
\[
(\sigma_{k-1})^{\frac{1}{k-1}}(\mu)
\le(\sigma_{k-1})^{\frac{1}{k-1}}(\lambda)+\sum_{i}(\mu_{i}-\lambda_{i})\frac{\partial}{\partial\lambda_{i}}(\sigma_{k-1})^{\frac{1}{k-1}}(\lambda),
\]
which implies \eqref{eq:garding inequality}.
\end{proof}





\subsection{A perturbed conformal flow}\label{subsection:modified  conformal flow}

Ideally, one would like to study the conformal flow
\begin{equation}
 u_t=-\frac{1}{2}g^{-1}\frac{\partial g}{\partial t}
 =\frac{\sigma_{k}(g)}{\sigma_{k-1}(g)}-s
\quad\text{on }M,
\end{equation}
with a normalization term $s$. Throughout, we write $g=e^{-2u}g_0$. However, Lemma~\ref{lem:concavity quotient} only yields weak parabolicity for this flow. Therefore, we consider the following perturbed flow
\begin{equation}
 u_t=-\frac{1}{2}g^{-1}\frac{\partial g}{\partial t}
 =\frac{\sigma_{k}(g)-\epsilon}{\sigma_{k-1}(g)}-s_{\epsilon}
\quad\text{on }M,
\label{eq:pertubed flow}
\end{equation}
with initial data $g(0)=g_0$, where
\[
 s_{\epsilon}=\frac{\int_M (\sigma_{k}(g)-\epsilon)\,dv_{g}}{\int_M \sigma_{k-1}(g)\,dv_{g}}.
\]
By construction, the flow preserves the quantity $\int_M \sigma_{k-1}(g)\,dv_g$ (see Lemma~\ref{lem:fucntional increase and r constant pertubed flow} below). For every $\epsilon>0$, the flow is parabolic and remains conformally invariant.

We also modify the associated functionals. Define
\[
J^k_{\epsilon}(g)=\int_M \sigma_{k}(g)\,dv_{g}+\frac{(2k-n)\epsilon}{n}\,\Vol(g)
\]
and
\[
\mathscr{E}_{n/2,\epsilon}(g)=-\int_{0}^{1}\int_{M}\sigma_{n/2}(g_{s})\,u\,dv_{g_{s}}\,ds-\frac{\epsilon}{n}\Vol(g),
\]
where $g_s=e^{-2su}g_0$ connects $g_0$ to $g$.

\begin{lem}\label{lem:fucntional increase and r constant pertubed flow}
Assume that $g\in\mathcal C_{k-1}$. If $k<\frac{n}{2}$, then $J^k_{\epsilon}$ is nonincreasing along the flow~\eqref{eq:pertubed flow}. If $k>\frac{n}{2}$, then $J^k_{\epsilon}$ is nondecreasing along the flow~\eqref{eq:pertubed flow}. If $k=\frac{n}{2}$, then $\mathscr{E}_{n/2,\epsilon}(g)$ is nonincreasing along the flow~\eqref{eq:pertubed flow}.
In particular,
\begin{equation}\label{monotoneofJe}
\frac{\partial}{\partial t} J^k_{\epsilon}
= (2k-n)\int_M\Bigl(\frac{\sigma_{k}(g)-\epsilon}{\sigma_{k-1}(g)}-s_{\epsilon}\Bigr)^{2}\sigma_{k-1}(g)\,dv_{g},
\end{equation}
\begin{equation}
\frac{\partial} {\partial t}\mathscr{E}_{n/2,\epsilon}(g)
=-\int_M\Bigl(\frac{\sigma_{\frac n 2}(g)-\epsilon}{\sigma_{\frac n 2-1}(g)}-s_{\epsilon}\Bigr)^2\sigma_{\frac{n}{2}-1}(g)\,dv_{g},
\end{equation}
and
\[
\frac {\partial }{\partial t}\int_M\sigma_{k-1}(g)\,dv_{g}= 0.
\]
\end{lem}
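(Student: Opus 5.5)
The plan is to use the standard first-variation formulas for the total $\sigma_k$-curvature under conformal deformation on a locally conformally flat manifold. Write $g=e^{-2u}g_0$ and $\dot u=u_t$. Then $\partial_t g=-2\dot u\, g$ and $\partial_t dv_g=-n\dot u\,dv_g$. The key identity is the conformal variation of $\sigma_k$ (Viaclovsky, Reilly):
\[
\frac{\partial}{\partial t}\sigma_k(g)=2k\dot u\,\sigma_k(g)+\frac{\partial\sigma_k}{\partial S}\Big|_{g}\cdot\nabla^2_g\dot u+\text{(first order terms in }\nabla\dot u),
\]
and on a locally conformally flat manifold the Newton tensor $T_{k-1}$ is divergence free. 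After integrating by parts, all derivative terms of $\dot u$ drop out. This gives
\[
\frac{d}{dt}\int_M\sigma_k(g)\,dv_g=(2k-n)\int_M\sigma_k(g)\,\dot u\,dv_g ,
\]
and likewise $\frac{d}{dt}\int_M\sigma_{k-1}(g)\,dv_g=(2k-2-n)\int_M\sigma_{k-1}(g)\dot u\,dv_g$ and $\frac{d}{dt}\Vol(g)=-n\int_M\dot u\,dv_g$. For $k=n/2$, the definition of $\mathscr E_{n/2}$ as a path integral, together with the same divergence-free structure (which makes the functional path independent), gives $\frac{d}{dt}\mathscr{E}_{n/2}(g)=-\int_M\sigma_{n/2}(g)\dot u\,dv_g$.

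The steps are as follows.

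First, the preservation of $\int\sigma_{k-1}$. Insert $\dot u=\frac{\sigma_k-\epsilon}{\sigma_{k-1}}-s_\epsilon$ to get
\[
\frac{d}{dt}\int_M\sigma_{k-1}\,dv_g=(2k-2-n)\Bigl(\int_M(\sigma_k-\epsilon)\,dv_g-s_\epsilon\int_M\sigma_{k-1}\,dv_g\Bigr)=0
\]
by the choice of $s_\epsilon$.

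Second, the monotonicity of $J^k_\epsilon$. Compute
\[
\frac{d}{dt}J^k_\epsilon=(2k-n)\int_M\sigma_k\dot u\,dv_g-(2k-n)\epsilon\int_M\dot u\,dv_g=(2k-n)\int_M(\sigma_k-\epsilon)\dot u\,dv_g .
\]
Write $\sigma_k-\epsilon=\sigma_{k-1}(\dot u+s_\epsilon)$. Then
\[
\frac{d}{dt}J^k_\epsilon=(2k-n)\Bigl(\int_M\sigma_{k-1}\dot u^2\,dv_g+s_\epsilon\int_M\sigma_{k-1}\dot u\,dv_g\Bigr).
\]
The last integral is $\int(\sigma_k-\epsilon)\,dv_g-s_\epsilon\int\sigma_{k-1}\,dv_g=0$, which yields \eqref{monotoneofJe}. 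Since $g\in\mathcal C_{k-1}$, we have $\sigma_{k-1}>0$, so the integrand is nonnegative and the sign is that of $2k-n$.

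Third, the case $k=n/2$ is handled identically: $\frac{d}{dt}\mathscr E_{n/2,\epsilon}=-\int_M(\sigma_{n/2}-\epsilon)\dot u\,dv_g$, and the same substitution gives the stated formula.

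The only step requiring care is the variational formula. This covers the integration by parts using $\nabla_i T_{k-1}^{ij}=0$ (valid since $g_0$ is locally conformally flat, as in \cite{ViacDuke}) and, for $k=n/2$, the path-independence of $\mathscr E_{n/2}$, which lets one differentiate along the flow rather than along the linear path $g_s$. I would cite these as standard (Brendle--Viaclovsky, Chang--Yang); the rest is the algebra above.
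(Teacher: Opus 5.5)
Your proposal is correct and follows the paper's proof essentially line by line. Both use Viaclovsky's variational formula $\frac{d}{dt}\int_M\sigma_j(g)\,dv_g=(2j-n)\int_M\sigma_j(g)\,u_t\,dv_g$, together with Brendle--Viaclovsky for $\mathscr{E}_{n/2}$ and the volume variation $-n\int_M u_t\,dv_g$. The conclusion then comes from the substitution $\sigma_k-\epsilon=\sigma_{k-1}(u_t+s_\epsilon)$ and the vanishing of $\int_M\sigma_{k-1}u_t\,dv_g$, which holds by the choice of $s_\epsilon$.

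One cosmetic point: in fact $\partial_t S_g=\nabla^2_g u_t$ exactly, so there are no additional first-order terms in $\nabla u_t$ to dispose of. Only the second-order term remains, and it integrates to zero against the divergence-free $T_{k-1}$.
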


\begin{proof}
By the choice of $s_{\epsilon}$ and a variational formula proved Viaclovsky \cite{ViacDuke}, we have
\begin{align*}
\frac{d}{dt}\int_M\sigma_{k-1}(g)\,dv_{g}
& =(2(k-1)-n)\int_M\sigma_{k-1}(g)u_{t}\,dv_g\\
& =(2(k-1)-n)\int_M\sigma_{k-1}(g)\Bigl(\frac{\sigma_{k}(g)-\epsilon}{\sigma_{k-1}(g)}-s_{\epsilon}\Bigr)\,dv_g\\
& =(2(k-1)-n)\int_M\bigl(\sigma_{k}(g)-\epsilon-s_{\epsilon}\sigma_{k-1}(g)\bigr)\,dv_{g}=0.
\end{align*}
If $2(k-1)=n$, then we  deduce from \cite{BVCVPDE}
\[
\frac{d}{dt}\left(-\int_{0}^{1}\int_{M}\sigma_{n/2}\left(g_{s}\right)u\,dv_{g_{s}}\,ds\right)=0.
\]
Furthermore,
\begin{align*}
\frac{d}{dt}\int_M\sigma_{k}(g)\,dv_{g}
& =(2k-n)\int_M\sigma_{k}(g)u_{t}\,dv_{g}\\
& =(2k-n)\int_M\sigma_{k}(g)\Bigl(\frac{\sigma_{k}(g)-\epsilon}{\sigma_{k-1}(g)}-s_{\epsilon}\Bigr)\,dv_{g},
\end{align*}
and
\begin{align*}
\frac{d}{dt}\Vol(g)
& =-n\int_M u_{t}\,dv_{g}
=-n\int_M\Bigl(\frac{\sigma_{k}(g)-\epsilon}{\sigma_{k-1}(g)}-s_{\epsilon}\Bigr)\,dv_{g}.
\end{align*}
Therefore,
\begin{align*}
\frac{d}{dt}\left(\int_M\sigma_{k}(g)\,dv_{g}+\frac{(2k-n)\epsilon}{n}\Vol(g)\right)
&= (2k-n)\int_M\Bigl(\frac{\sigma_{k}(g)-\epsilon}{\sigma_{k-1}(g)}-s_{\epsilon}\Bigr)^{2}\sigma_{k-1}(g)\,dv_{g}.
\end{align*}
Similarly, when $2k=n$ we have {
\begin{align*}
\frac{d\mathscr{E}_{n/2,\epsilon}(g)}{dt}
&=-\int_M\Bigl(\frac{\sigma_{n/2}(g)-\epsilon}{\sigma_{n/2-1}(g)}-s_{\epsilon}\Bigr)(\sigma_{n/2}(g)-\epsilon)\,dv_{g}.
\end{align*}
}
Using the definition of $s_{\epsilon}$ and the evolution of $\Vol(g)$, we obtain
\begin{align*}
\frac{d\mathscr{E}_{n/2,\epsilon}(g)}{dt}
&=-\int_M\Bigl(\frac{\sigma_{\frac n 2}(g)-\epsilon}{\sigma_{\frac n 2-1}(g)}-s_{\epsilon}\Bigr)^2\sigma_{\frac{n}{2}-1}(g)\,dv_{g}.
\end{align*}
\end{proof}

When $W$ is a symmetric matrix, we write $\lambda(W)$ for the vector of eigenvalues of $W$ and define $\sigma_k(W):=\sigma_k(\lambda(W))$.
The flow~\eqref{eq:pertubed flow} can be written in the form
\begin{align}
 u_{t} &= e^{2u}F(W,u)-s_{\epsilon},
\label{eq:parabolic equation}
\end{align}
where
\eq{\label{expression of F}
F(W,u) := \frac{\sigma_{k}(W)-\nu}{\sigma_{k-1}(W)},
}
with
\[
\nu:=\epsilon e^{-2ku},
\]
and
\[
W:=S_{g_u}=\nabla^{2}u+\nabla u\otimes \nabla u-\frac{|\nabla u|^{2}}{2}g_{0}+S_{g_{0}}.
\]

Denote
\[
F^{ij}:= \frac{\partial}{\partial W_{ij}}F(W, u)
=\frac{1}{\sigma_{k-1}^{2}(W)}\left\{ \sigma_{k-1}(W)T_{k-1}(W)_{j}^{i}-(\sigma_{k}(W)-\nu)T_{k-2}(W)_{j}^{i}\right\}.
\]
Then
\begin{align}
\sum_{i}F^{ii}
&=\left(n-k+1-\frac{(n-k+2)\sigma_{k}(W)\sigma_{k-2}(W)}{\sigma_{k-1}^{2}(W)}\right)+\frac{(n-k+2)\nu\sigma_{k-2}(W)}{\sigma_{k-1}^{2}(W)}\nonumber \\
&=\sum_{i}\frac{\partial}{\partial W_{ii}}\frac{\sigma_{k}(W)}{\sigma_{k-1}(W)}+\frac{(n-k+2)\nu\sigma_{k-2}(W)}{\sigma_{k-1}^{2}(W)}.
\label{eq:F^ii trace}
\end{align}
By Lemma~\ref{lem:concavity quotient}, for each fixed $u$ (hence $\nu\ge 0$) the operator $W\mapsto F(W,u)$ is elliptic and concave on $\Gamma_{k-1}^{+}$.

Define the maximal existence time of~\eqref{eq:pertubed flow} by
\[
T^*=\sup \left\{T_0>0\,\middle|\,\eqref{eq:pertubed flow}\ \text{exists on }[0,T_0]\ \text{and }g(t)\in\mathcal{C}_{k-1}\ \text{for }t\in[0,T_0]\right\}.
\]
The parabolicity of~\eqref{eq:pertubed flow} implies that $T^*>0$.

\begin{lem}\label{lem:gradient est}
On a locally conformally flat manifold $(M,g_{0})$, let $g=e^{-2u}g_{0}\in \mathcal C_{k-1}$ satisfy \eqref{eq:pertubed flow} for $t\in [0, T]$ with $T<T^*$. Then
\[
|\nabla u|\le C,
\]
where $C$ is independent of $\epsilon$, $s_{\epsilon}$, and $T$.
\end{lem}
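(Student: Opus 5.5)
This gradient bound comes from Ye's moving-plane argument on conformally flat manifolds~\cite{YeJDG}, as adapted in \cite{GWCrelle,GWDuke}. The plan is to exploit three facts. The flow \eqref{eq:pertubed flow} is conformally invariant. It satisfies a comparison principle for admissible solutions. And $\epsilon$ and $s_\epsilon$ enter only through terms that do not depend on the position $x$. Note that $\nu=\epsilon e^{-2ku}$ depends on $u$, but written invariantly the speed $(\sigma_k(g)-\epsilon)/\sigma_{k-1}(g)$ is a function of the curvature of $g$ alone.

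\emph{Step 1: lift to the universal cover.} Since $(M,g_0)$ is locally conformally flat, pass to its universal cover and use the developing map, as in \cite{YeJDG}. If $M$ is not conformally equivalent to the round sphere, the cover is conformally a domain $\Omega\subset\mathbb{S}^n$, realized as $\mathbb{R}^n$ minus a closed set of measure zero. If $M$ is the round sphere, we compose with conformal transformations of $\mathbb{S}^n$. Write the lifted metric as $g(t)=e^{-2w(x,t)}|dx|^2$ on $\Omega$. Then $w$ solves
\[
w_t=\frac{\sigma_k(g)-\epsilon}{\sigma_{k-1}(g)}-s_\epsilon(t),
\]
where $s_\epsilon(t)$ depends only on $t$, not on $x$.

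\emph{Step 2: invariance under reflections.} For any hyperplane reflection (or inversion) $\phi$ of $\mathbb{R}^n$, the pulled-back function
\[
w^\phi(x,t)=w(\phi(x),t)-\log|\det D\phi(x)|^{1/n}
\]
solves the same equation. This holds because $\sigma_j(\phi^*g)=\sigma_j(g)\circ\phi$ and the constants $\epsilon$, $s_\epsilon(t)$ are unchanged. The parabolic maximum principle then applies to the difference $w^\phi-w$ on a half-space. To use it, we linearize by writing the difference of the two operators as $\int_0^1 F^{ij}\,ds$ applied to $D^2(w^\phi-w)$, plus lower-order terms. Ellipticity holds along this path because $\mathcal C_{k-1}$ is convex (the segment between two matrices in $\Gamma^+_{k-1}$ stays in the cone) and $F$ is elliptic there for $\epsilon>0$ (Lemma~\ref{lem:concavity quotient}). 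The equation has no zeroth-order term except through $\nu$, so we treat it as coming from the invariant form of the speed.

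\emph{Step 3: moving planes.} At $t=0$ the metric $g_0$ is smooth on compact $M$. Following \cite{YeJDG}, this gives some $\lambda_0>0$, depending only on $g_0$, such that for every boundary point $x_0$ and every plane at distance less than $\lambda_0$, the reflected function dominates the original on the relevant half-region. The key point is that this domination is preserved for all $t\in[0,T]$ by the comparison in Step~2. Taking the limiting plane through a given point then yields $|\nabla u|\le C(g_0)$ on $M\times[0,T]$. The constant is independent of $\epsilon$, $s_\epsilon$ and $T$, because those quantities cancel when we take the difference $w^\phi-w$.

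\emph{Main obstacle.} The delicate point is justifying the comparison principle in Step~2 near the singular set $\partial\Omega$, and near the region where the half-space comparison is set up. As in \cite{YeJDG,GWCrelle}, this is handled by the asymptotic behaviour of $w$ near $\partial\Omega$, which holds uniformly in $t$ by compactness of $M$. We also need admissibility, $g(t)\in\mathcal C_{k-1}$ for $t<T^*$, so that linear interpolation between $w$ and $w^\phi$ remains in the region where $F$ is elliptic.
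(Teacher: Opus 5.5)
Your proposal follows the same route as the paper, whose entire proof is an appeal to Ye's moving-plane argument for conformally invariant flows on locally conformally flat manifolds. Your outline — invariance of the flow under conformal maps with $s_\epsilon$ depending only on $t$, preservation of the reflection inequalities from $t=0$ by the parabolic maximum principle, and a reflection range fixed by $g_0$ alone — is exactly what that citation encodes, with two small precisions: the two-sided bound in Step~3 comes from inversions in small spheres through the given point (reflections in fixed hyperplanes only give the sign of a directional derivative), and the bound is independent of $\epsilon$ not because $\epsilon$ cancels in $w^\phi-w$ (only $s_\epsilon$ does) but because the comparison is qualitative and the initial reflection range depends only on $g_0$.
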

\begin{proof}
Applying Ye's argument \cite{YeJDG}, we obtain $|\nabla u|\le C$. This is one advantage of using a conformal invariant flow on a locally conformally flat manifold.
\end{proof}

\section{Second derivative estimates}\label{estimates forsecond derivatives}

Our main task is to establish global second-derivative estimates.

\begin{thm}\label{thm:C2 estimates}
Let $g=e^{-2u}g_{0}\in \mathcal C_{k-1}[g_{0}]$ be a solution of \eqref{eq:pertubed flow} on $M\times[0,T]$, and assume that $|\nabla u|\le c$ and $\epsilon>0$. Then
\[
|\nabla^{2}u|\le C,
\]
where $C$ depends only on $n$, $k$, $g_0$, and $\|\nabla u\|_{C^{0}}$, and is independent of $\epsilon$ and $s_{\epsilon}$.
\end{thm}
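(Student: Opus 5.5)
Since $W\in\Gamma_{k-1}^+$ with $k\ge 2$ gives $\sigma_1(W)>0$, i.e. $\Delta u\ge -C$ by the gradient bound, it suffices to bound $W$ from above. Once $\lambda_{\max}(W)\le C$ is known, $\sigma_1(W)>0$ bounds every eigenvalue from below, and then $|\nabla^2 u|\le C$ follows from $|\nabla u|\le c$ and the expression \eqref{ew_W}. So the plan is to apply the maximum principle on $M\times[0,T]$ to
\[
G=\lambda_{\max}(W)+\Bigl(m+\tfrac12\Bigr)|\nabla u|^2 ,
\]
as announced in the introduction, with $m$ a large constant depending only on $n,k,g_0,c$. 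At an interior maximum point $(x_0,t_0)$ with $t_0>0$ I will choose normal coordinates diagonalizing $W$, with $W_{11}=\lambda_{\max}$. If $\lambda_{\max}$ has multiplicity, I will use the standard perturbation trick: replace $W$ by $W-B$ with $B$ a small constant-coefficient matrix vanishing in the $11$ direction, or work with a smooth lower support function. This produces a viscosity-type inequality with the extra term
\[
\sum_{p>1}\frac{2F^{ii}(W_{1p,i})^2}{\lambda_1-\lambda_p}
\]
in the second derivative of $\lambda_{\max}$, as in \eqref{eq_x2}. I assume $\lambda_1\gg1$, since otherwise there is nothing to prove.

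\textbf{Step 1: evolution of $W_{11}$.} Differentiate \eqref{eq:parabolic equation} once and twice in the $e_1$ direction. The time derivative gives $W_{11,t}=u_{t,11}+2u_1u_{t1}-u_lu_{lt}+\cdots$, and $u_t=e^{2u}F-s_\epsilon$ with $s_\epsilon$ constant in space, so $s_\epsilon$ drops out of all spatial derivatives. This is how independence of $s_\epsilon$ enters. I will then form $\partial_t G-e^{2u}F^{ij}G_{ij}$ and use the following inputs:
\begin{enumerate}
\item concavity of $F$ (Lemma~\ref{lem:concavity quotient}) to drop $F^{ij,pq}W_{ij,1}W_{pq,1}\le0$;
\item the commutation formulas on the locally conformally flat background, which convert $W_{11,ii}$ into $W_{ii,11}$ plus terms of size $C\lambda_1\sum F^{ii}$ plus terms quadratic in $\nabla u$ times $W$;
\item the gradient term, which produces the good quantity $(m+\tfrac12)\,2F^{ii}W_{ii}^2\ge c\,m\,F^{11}\lambda_1^2$ up to lower order.
\end{enumerate}
The derivatives of $\nu=\epsilon e^{-2ku}$ generate terms $\epsilon$-weighted by $\sigma_{k-1}^{-1}$. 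I will group them with the extra trace contribution $\frac{(n-k+2)\nu\sigma_{k-2}}{\sigma_{k-1}^2}$ in \eqref{eq:F^ii trace} and check, by a direct computation using $\nu_1=-2k u_1\nu$ and $\nu_{11}=(4k^2u_1^2-2ku_{11})\nu$, that their sum has a sign. Only then can they be discarded uniformly in $\epsilon$.

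\textbf{Step 2: the bad term $u_iF_i$.} Differentiating $e^{2u}F$ produces mixed terms $4e^{2u}u_1F_1$ and $e^{2u}u_lF_l$. Here $F_l=F^{ij}W_{ij,l}+F_u u_l$ contains third derivatives $F^{ii}W_{ii,1}$, which cannot be absorbed by concavity. I expect this to be the main obstacle, as the authors indicate. At the maximum point, $G_i=0$ gives $W_{11,i}=-(2m+1)u_lu_{li}$, which handles the component $F^{11}W_{11,1}$ via the gradient term. The remaining $F^{pp}W_{pp,1}$ with $p>1$ I will rewrite via Codazzi-type symmetry ($W_{pp,1}=W_{1p,p}+$ lower order, valid since $g_0$ is locally conformally flat and $W$ is a Schouten tensor). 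I then bound them by Cauchy--Schwarz:
\[
|u_1F^{pp}W_{1p,p}|\le \frac{F^{pp}(W_{1p,p})^2}{\lambda_1-\lambda_p}+C\,F^{pp}(\lambda_1-\lambda_p).
\]
The first term is absorbed by the favorable $\lambda_{\max}$ term. The second is $\le C\lambda_1\sum F^{ii}$.

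\textbf{Step 3: closing.} I will then need $\sum F^{ii}W_{ii}^2\gtrsim \lambda_1^2\sum F^{ii}/C$ or $F^{11}\lambda_1^2\gtrsim\lambda_1\sum F^{ii}$. For this I will use the structure of $\partial(\sigma_k/\sigma_{k-1})$ on $\Gamma_{k-1}^+$: either $F^{11}\ge \theta\sum F^{ii}$, or many eigenvalues are comparable to $\lambda_1$, in which case the $\sum_{i}F^{ii}W_{ii}^2$ term dominates. Lemma~\ref{lem:Garding} and \eqref{eq:alg5} control the terms $F^{ij}W_{ij}$ and $F^{ij}(S_{g_0})_{ij}$. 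Choosing $m$ large then yields $\partial_t G-e^{2u}F^{ij}G_{ij}<0$ once $\lambda_1\ge C$, contradicting maximality. Hence $G\le \max\{C,\max_{t=0}G\}$, which gives the bound.
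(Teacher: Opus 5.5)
Your setup (the test function $G$, the perturbation of $\lambda_{\max}$, and the treatment of $u_1F_1$ by the first-order condition for $i=1$ and by Codazzi plus Cauchy--Schwarz against the $\lambda_{\max}$ term for $i>1$) is the paper's, but Step~3 does not close. After Step~2 you must absorb errors of size $C\lambda_1\sum_iF^{ii}$. You must also absorb the term $-2e^{2u}u_{11}F$, produced by differentiating $e^{2u}$ twice, which is of order $\lambda_1$ whenever $F$ is bounded. The only good terms you have are $m\sum_iF^{ii}W_{ii}^2$ and $F^{11}\lambda_1^2$. The dichotomy you invoke is false for $\sigma_k/\sigma_{k-1}$, even in $\Gamma_n^+$. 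Take $k=2$ and $\lambda=(\lambda_1,1,\dots,1)$ with $\lambda_1\to\infty$. Then $F\to n-1$, $\sum_iF^{ii}\to n-1$ and $F^{11}=\frac{n(n-1)}{2(\lambda_1+n-1)^2}$. No other eigenvalue is comparable to $\lambda_1$, and $\sum_iF^{ii}W_{ii}^2=O(1)$, while the errors grow like $\lambda_1$. So no choice of $m$ rescues the argument as written.

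The missing ingredient is the conformal structure of $W$. When you rewrite $W_{11,ii}$ in terms of $W_{ii,11}$, the $u_iu_j$ parts cancel. The $-\frac12|\nabla u|^2g_0$ part, however, contributes $-\sum_{i,\ell}F^{ii}u_{\ell i}^2$ (from $W_{11}$) and $+\sum_{i}F^{ii}\sum_\ell u_{1\ell}^2$ (from $W_{ii}$), besides third-order terms handled through the differentiated equation and the first-order condition. The negative piece is absorbed by the gradient part of $G$; this is why the coefficient is $m+\frac12$. The positive piece gives $\sum_iF^{ii}|\nabla u_1|^2\ge u_{11}^2\sum_iF^{ii}\approx\lambda_1^2\sum_iF^{ii}$, a good term weighted by the full trace. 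You dismissed these contributions as ``terms quadratic in $\nabla u$ times $W$'', but they are precisely what closes the estimate. They absorb the $C\lambda_1\sum_iF^{ii}$ errors directly. The $\sigma_k/\sigma_{k-1}$ part of $-2u_{11}F$ is then handled by AM--GM between $\frac12\cdot\frac{n-k+1}{k}u_{11}^2$ and $m\frac{k}{n-k+1}\frac{\sigma_k(W)^2}{\sigma_{k-1}(W)^2}$. The first uses $\sum_iF^{ii}\ge\frac{n-k+1}{k}$ from Lemma~\ref{lem:Garding}. The second comes from $\sum_{i,j,l}F^{ij}W_{li}W_{lj}\ge\frac{k}{n-k+1}\frac{\sigma_k(W)^2}{\sigma_{k-1}(W)^2}+\frac{k-1}{n}\frac{\nu\sigma_1(W)}{\sigma_{k-1}(W)}$.

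Separately, your item~(1) is too lossy for the $\epsilon$-terms. Since $F$ depends on $u$ through $\nu$, there is a cross term $\frac{4k\nu}{\sigma_{k-1}(W)^2}(\sigma_{k-1}(W))_1u_1$ that involves third derivatives and has no sign. A direct computation with $\nu_1,\nu_{11}$ cannot sign it. The paper controls it with the quantitative concavity bound \eqref{goodthirdterm} instead of discarding the concavity term. It then absorbs the remaining terms of size $\nu\lambda_1/\sigma_{k-1}(W)$ with the $\frac{k-1}{n}\frac{\nu\sigma_1(W)}{\sigma_{k-1}(W)}$ piece above, using \eqref{k2} when $k=2$.
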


\begin{proof}
Set
\[
G=\lambda_{\max}(g_0^{-1}W)+\left(m+\frac12\right)|\nabla u|^2,
\qquad m:=4n.
\]
Assume $G$ attains its maximum at $(x_{0},t_{0})\in M\times(0,T]$. Let $\lambda_1$ denote the largest eigenvalue of $g_0^{-1}W$ at $(x_0,t_0)$. Since $\lambda_1$ may have multiplicity, we use a perturbation argument (cf. \cite{szekelyhidi2018fully,STWActa2017}).

In a neighborhood $U$ of $x_0$, choose $g_0$-normal coordinates $(x^1,\ldots,x^n)$ centered at $x_0$ and adapted to an eigenbasis of $W(x_0,t_0)$. Then
\[
(g_0)_{ij}(x_0)=\delta_{ij},
\qquad
\partial_k(g_0)_{ij}(x_0)=0,
\qquad
\Gamma_{ij}^k(x_0)=0.
\]
We may assume $W$ is diagonal at $(x_0,t_0)$ and with no confusions, we denote $W_{1}^1=\lambda_{\max}(W)$ and also $W^{1}_1=W_{11}$ . If $G(x_0,t_0)$ is bounded by a universal constant depending only on $n,k,\|\nabla u\|_{C^0}$ and $\|S_{g_0}\|_{C^2}$, then the desired estimate is immediate; thus we may further assume $G(x_0,t_0)$ is sufficiently large so that the inequalities used below hold.

Define the perturbation tensor $B$ in these coordinates by constant components
\[
B_{ij}=\delta_{ij}-\delta_{i1}\delta_{j1}
\qquad\bigl(\text{i.e. }B_{11}=0,\ B_{1p}=B_{p1}=0,\ B_{pq}=\delta_{pq}\text{ for }p,q>1\bigr).
\]
Then
\[
\partial_k B_{ij}=0,
\qquad
\partial_l\partial_k B_{ij}=0.
\]

Denote $\widetilde W=W-B$.
With the above choice of \(B\), at \(x_0\) we have for all $1\le i,p\le n$
\[
\widetilde W_{11,i}=W_{11,i},
\qquad
\widetilde W_{1p,i}=W_{1p,i},
\qquad
\widetilde W_{11,ii}=W_{11,ii}.
\]
We denote by
\[
\widetilde\lambda_1\ge\widetilde\lambda_2\ge\cdots\ge\widetilde\lambda_n
\]
the eigenvalues of \(\widetilde W\). At \((x_0,t_0)\), we have
\[
\widetilde\lambda_1=\lambda_1,\qquad
\widetilde\lambda_p=\lambda_p-1,\quad p>1.
\]
Therefore
\[
\widetilde\lambda_1-\widetilde\lambda_p
=
\lambda_1-\lambda_p+1\ge 1,\qquad p>1.
\]
Hence \(\widetilde\lambda_1\) is a simple eigenvalue of \(\widetilde W\)
at \((x_0,t_0)\), and consequently \(\widetilde\lambda_1\) is smooth in a
neighborhood of \((x_0,t_0)\).

We define the local perturbed auxiliary function
\[
\widetilde G
=
\widetilde\lambda_1+\left(m+\frac12\right)|\nabla u|^2
\]
on \(U\times [0,T]\). 

Since $B\ge 0$ (as a bilinear form), we have $\widetilde W=W-B\le W$ and hence
\[
\widetilde\lambda_1=\lambda_{\max}(\widetilde W)\le \lambda_{\max}(W)
\quad\text{near }(x_0,t_0).
\]
At $(x_0,t_0)$ we have $\widetilde\lambda_1=\lambda_{\max}(W)$, and therefore
\[
\widetilde G(x_0,t_0)=G(x_0,t_0),
\qquad
\widetilde G\le G\ \text{near }(x_0,t_0).
\]
Since $G$ attains its maximum at $(x_0,t_0)$, $\widetilde G$ attains a local maximum at $(x_0,t_0)$ as well. Hence we may apply the maximum principle to $\widetilde G$ at $(x_0,t_0)$ and obtain
\[
0\le \widetilde G_t,\qquad
0=\widetilde G_i,\qquad
0\ge \sum_{i,j=1}^nF^{ij}\partial_i \partial_j\widetilde  G=\sum_{i,j=1}^nF^{ij}\nabla_i \nabla_j\widetilde  G.
\]
Equivalently, at $(x_0,t_0)$,
\begin{align}
0 &\le \widetilde\lambda_{1,t}+(2m+1)\sum_{k=1}^n u_k u_{kt},
\label{first derivative of time}\\
0 &= \widetilde\lambda_{1,i}+(2m+1)\sum_{\ell=1}^n u_{\ell}u_{\ell i},
\label{first derivative of space}\\
0 &\ge \sum_{i,j=1}^n F^{ij}(\widetilde\lambda_1)_{ij}
+(2m+1)\sum_{i,j,\ell=1}^nF^{ij}u_{\ell i}u_{\ell j}
+(2m+1)\sum_{i,j,\ell=1}^nF^{ij}u_{\ell}u_{\ell ij}.
\label{second derivative of space}
\end{align}

At \((x_0,t_0)\), since \(\widetilde W\) is diagonal and
\(\widetilde\lambda_1\) is simple,  we have
\[
\frac{\partial \widetilde\lambda_1}{\partial \widetilde W^p_{q}}
=
\delta_{p1}\delta_{q1},
\]
and
\[
\frac{\partial^2 \widetilde\lambda_1}
{\partial \widetilde W^p_{q}\partial \widetilde W^r_{s}}
=
(1-\delta_{p1})
\frac{\delta_{q1}\delta_{r1}\delta_{ps}}
{\widetilde\lambda_1-\widetilde\lambda_p}
+
(1-\delta_{r1})
\frac{\delta_{s1}\delta_{p1}\delta_{qr}}
{\widetilde\lambda_1-\widetilde\lambda_r}.
\]
See \cite{spruck2005geometric} and \cite{szekelyhidi2018fully}.
Therefore, at $(x_0,t_0)$, 
{
\[
\widetilde\lambda_{1,i}=W_{11,i},\; \widetilde\lambda_{1,t}=W_{11,t}
\]
}
and
\eq{\label{eq_x2}
(\widetilde\lambda_1)_{ii}
&=\frac{\partial^2 \widetilde\lambda_1}
{\partial \widetilde W^p_{q}\partial \widetilde W^r_{s}} \widetilde W^p_{q,i}\widetilde W^r_{s,i}+\frac{\partial \tilde \lambda_1}{\partial \widetilde W^p_{q}}\widetilde W^p_{p,ii}\\
&=
W^1_{1,ii}
+
2\sum_{p>1}
\frac{W_{1p,i}^2}
{\lambda_1-\widetilde\lambda_p},
}
where $\nabla_i W^p_q=\nabla_i W_{pq}$ at $(x_0,t_0)$.
Consequently,
\[
\sum_{i=1}^n F^{ii}(\widetilde\lambda_1)_{ii}
=
\sum_{i=1}^nF^{ii}W_{11,ii}
+
2\sum_{i=1}^nF^{ii}\sum_{p>1}
\frac{W_{1p,i}^2}
{\lambda_1-\widetilde\lambda_p}.
\]

Now by \eqref{first derivative of time}, \eqref{first derivative of space}, and \eqref{second derivative of space}, we have
\begin{align}
0\ge {}& e^{2u}\bigg(\sum_{i=1}^n F^{ii}\left(\widetilde{\lambda}_{1}\right)_{ii}+(2m+1)\sum_{i,\ell=1}^nF^{ii}u_{\ell i}u_{\ell i}+(2m+1)\sum_{i,\ell=1}^nF^{ii}u_{\ell}u_{\ell ii}\bigg)\nonumber\\
&-\Bigl(W_{11,t}+(2m+1)\sum_{k=1}^n u_{k}u_{kt}\Bigr)\nonumber\\
= {}& e^{2u}\left(\sum_{i=1}^nF^{ii}\Bigl(W_{11,ii}+2\sum_{p>1}\frac{W_{1p,i}^{2}}{\lambda_{1}-\widetilde{\lambda}_{p}}\Bigr)
+(2m+1)\sum_{i,\ell=1}^nF^{ii}u_{\ell i}u_{\ell i}+(2m+1)\sum_{i,\ell=1}^nF^{ii}u_{\ell}u_{\ell ii}\right)\nonumber\\
&-\Bigl(W_{11,t}+(2m+1)\sum_{k=1}^n u_{k}u_{kt}\Bigr)\nonumber\\
\ge {}& e^{2u}\sum_{i=1}^nF^{ii}\Bigl(u_{11ii}+2u_{1}u_{1ii}+2u_{1i}u_{1i}+(S_{g_{0}})_{11,ii}\Bigr)
+e^{2u}\sum_{i,\ell=1}^nF^{ii}\Bigl(2m u_{\ell i}u_{\ell i}+2m u_{\ell}u_{\ell ii}\Bigr)\nonumber\\
&-\Bigl(u_{11t}+2u_{1}u_{1t}+2m\sum_{\ell=1}^n u_{\ell}u_{\ell t}\Bigr)
+2e^{2u}\sum_{i=1}^nF^{ii}\sum_{p>1}\frac{W_{1p,i}^{2}}{\lambda_{1}-\widetilde{\lambda}_{p}}\nonumber\\
\ge {}& e^{2u}\sum_{i=1}^n\Bigl(F^{ii}W_{ii,11}+2F^{ii}u_{1}W_{ii,1}+2m\sum_{\ell=1}^nF^{ii}u_{\ell}W_{ii,\ell}
+\sum_{\ell=1}^nF^{ii}u_{1\ell}^{2}+2m\sum_{\ell=1}^nF^{ii}u_{\ell i}u_{\ell i}\Bigr)\nonumber\\
&-\Bigl(u_{11t}+2u_{1}u_{1t}+2m\sum_{\ell=1}^n u_{\ell}u_{\ell t}\Bigr)
+2e^{2u}\sum_{i=1}^nF^{ii}\sum_{p>1}\frac{W_{1p,i}^{2}}{\lambda_{1}-\widetilde{\lambda}_{p}}-C(1+|\nabla u|^2)e^{2u}G F^{ii}.
\label{mainineq1}
\end{align}
Here we used the standard commutation identities and estimates
\begin{align*}
u_{lij}&=u_{ijl}+\sum_{m=1}^nR_{milj}u_{m},\\
u_{llij}&= u_{ijll}+O(|\nabla^2 u|)+O(|\nabla u|),\\
\left(\sum_{\ell=1}^nu_{\ell}^{2}\right)_{pp}&=2\sum_{\ell=1}^n\left(u_{pp\ell}u_{\ell}+u_{p\ell}^{2}\right)+O\left(|\nabla u|^{2}\right),\\
u_iu_{i11}&=O\bigl((1+|\nabla u|^{2})G\bigr).
\end{align*}
For any fixed $p$,
\begin{align*}
F_{p}u_{p}=\sum_{i,j=1}^{n}F^{ij}W_{ij,p}u_{p}+\frac{\partial F}{\partial u}u_{p}^{2}=\sum_{i,j=1}^{n}F^{ij}W_{ij,p}u_{p}+\frac{2k\epsilon e^{-2ku}u_{p}^{2}}{\sigma_{k-1}(W)},
\end{align*}
and
\[
\begin{aligned}\sum_{i,j=1}^{n}F^{ij}W_{ij,pp}= & F_{pp}-\sum_{i,j,s,l=1}^{n}\frac{\partial^{2}F}{\partial W_{ij}\partial W_{sl}}W_{ij,p}W_{sl,p}\\
 & -2\sum_{i,j=1}^{n}\frac{\partial^{2}F}{\partial W_{ij}\partial u}W_{ij,p}u_{p}-\frac{\partial^{2}F}{\partial^{2}u}u_{p}^{2}-\frac{\partial F}{\partial u}u_{pp}\\
= & F_{pp}-\sum_{i,j,s,l=1}^{n}\frac{\partial^{2}F}{\partial W_{ij}\partial W_{sl}}W_{ij,p}W_{sl,p}-2\sum_{i,j=1}^{n}\frac{\partial^{2}F}{\partial W_{ij}\partial u}W_{ij,p}u_{p}\\
 & +\frac{(2k)^{2}\epsilon e^{-2ku}u_{p}^{2}}{\sigma_{k-1}(W)}-\frac{2k\epsilon e^{-2ku}u_{pp}}{\sigma_{k-1}(W)}.
\end{aligned}
\]

It follows 
\begin{align}
 & e^{2u}\sum_{i=1}^nF^{ii}W_{ii,11}+2e^{2u}\sum_{i=1}^nF^{ii}u_{1}W_{ii,1}+2e^{2u}m\sum_{i,l=1}^{n}F^{ii}u_{l}W_{ii,l}
\nonumber\\
= & e^{2u}(F_{11}+2F_{1}u_{1}+2m\sum_{l}F_{l}u_{l}
)\nonumber\\
 & +e^{2u}\bigg(-\sum_{i,j,s,l=1}^{n}\frac{\partial^{2}F}{\partial W_{ij}\partial W_{sl}}W_{ij,1}W_{sl,1}-2\sum_{i,j=1}^{n}\frac{\partial^{2}F}{\partial W_{ij}\partial u}W_{ij,1}u_{1}\bigg)\nonumber\\
 & -e^{2u}\frac{2k\epsilon e^{-2ku}}{\sigma_{k-1}(W)}(u_{11}+(2-2k)u_{1}^{2}+2m|\nabla u|^{2}).\label{fourthorderderivative}
\end{align}
Differentiating our equation 
\begin{equation}
e^{2u}F(W,u)-s_{\epsilon}=u_{t}\label{eq:equation with time2}
\end{equation}
yields that for any $p=1,\cdots,n$, 
\begin{equation}
2e^{2u}u_{p}F+e^{2u}F_{p}=u_{pt},\label{eq:first derivative of equation2}
\end{equation}
and 
\begin{equation}
4e^{2u}u_{p}^{2}F+2e^{2u}u_{pp}F+4e^{2u}u_{p}F_{p}+e^{2u}F_{pp}=u_{ppt}.\label{eq:second derivative of equation}
\end{equation}
Combining (\ref{eq:first derivative of equation2}) and (\ref{eq:second derivative of equation})
yields 
\begin{align}
 & e^{2u}\big(F_{11}+2F_{1}u_{1}+2m\sum_{l}F_{l}u_{l}
\big)-(u_{11t}+2u_{1}u_{1t})-2m\sum_{l=1}^{n}u_{l}u_{lt}\nonumber\\
= & -(8e^{2u}u_{1}^{2}F+2e^{2u}u_{11}F+4e^{2u}u_{1}F_{1})-4me^{2u}|\nabla u|^{2}F.\label{secondderivativeofequation}
\end{align}
Now by (\ref{mainineq1}), (\ref{fourthorderderivative}),
(\ref{secondderivativeofequation}),
we arrive at 
{
\begin{equation}
\begin{aligned}0\geq & \sum_{i,j=1}^{n}e^{2u}F^{ij}\widetilde{G}_{ij}-\widetilde{G}_{t}\\
\ge & -(8e^{2u}u_{1}^{2}F+2e^{2u}u_{11}F+4e^{2u}u_{1}F_{1})-4me^{2u}|\nabla u|^{2}F\\
 & +e^{2u}\bigg(-\sum_{i,j,s,l=1}^{n}\frac{\partial^{2}F}{\partial W_{ij}\partial W_{sl}}W_{ij,1}W_{sl,1}-2\sum_{i,j=1}^{n}\frac{\partial^{2}F}{\partial W_{ij}\partial u}W_{ij,1}u_{1}\bigg)\\
 & +e^{2u}\sum_{i=1}^{n}F^{ii}\sum_{l=1}^{n}u_{1l}^{2}+2me^{2u}\sum_{i,j,l=1}^{n}F^{ij}u_{li}u_{lj}+2e^{2u}\sum_{i=1}^{n}F^{ii}\sum_{p>1}\frac{W_{1p,i}^{2}}{\lambda_{1}-\widetilde{\lambda}_{p}}\\
 & -e^{2u}\frac{2k\epsilon e^{-2ku}}{\sigma_{k-1}(W)}(u_{11}+(2-2k)u_{1}^{2}+2m|\nabla u|^{2})-Ce^{2u}(1+|\nabla u|^2)G\sum_{i=1}^{n}F^{ii}.
\end{aligned}
\label{eq:final 1}
\end{equation}
}
We remark that the term 
\begin{align}
-4e^{2u}u_{1}F_{1}
\end{align}in the previous inequality 
is the extra one we have mentioned in the Introduction and have to deal
with. It can be written as 
\begin{align}
-4e^{2u}u_{1}F_{1} & =-4e^{2u}u_{1}(\sum_{i,j=1}^{n}F^{ij}W_{ij,1}+F_{u}u_{1})\nonumber \\
 & =-4e^{2u}u_{1}\sum_{i=1}^{n}F^{ii}W_{ii,1}-\frac{8k\epsilon e^{2u}e^{-2ku}u_{1}^{2}}{\sigma_{k-1}(W)}\nonumber \\
 & \ge-4e^{2u}u_{1}\sum_{i=1}^{n}F^{ii}W_{i1,i}-\frac{8k\epsilon e^{2u}e^{-2ku}u_{1}^{2}}{\sigma_{k-1}(W)}-Ce^{2u}\sum_{i=1}^{n}F^{ii}G(1+|\nabla u|^2)\nonumber \\
 & \ge-e^{2u}\sum_{i=2}^{n}\frac{F^{ii}W_{1i,i}^{2}}{\lambda_{1}-\tilde{\lambda}_{i}}-Ce^{2u}u_1^2\sum_{i=2}^{n}F^{ii}(\lambda_{1}-\tilde{\lambda}_{i})+4e^{2u}u_{1}F^{11}((2m+1)\sum_{l=1}^{n}u_{l}u_{l1})\nonumber\\
 &\quad -\frac{8k\epsilon e^{2u}e^{-2ku}u_{1}^{2}}{\sigma_{k-1}(W)}-Ce^{2u}\sum_{i=1}^{n}F^{ii}G(1+|\nabla u|^2)\nonumber\\
&\ge  -e^{2u}\sum_{i=2}^{n}\frac{F^{ii}W_{1i,i}^{2}}{\lambda_{1}-\tilde{\lambda}_{i}}-\frac{8k\epsilon e^{2u}e^{-2ku}u_{1}^{2}}{\sigma_{k-1}(W)}-Ce^{2u}\sum_{i=1}^{n}F^{ii}G(1+|\nabla u|^2),\label{eq:third derivatives bad term}
\end{align}
where in the fourth inequality we used the facts $W_{11,1}+(2m+1)\sum_{i=1}^n u_{1i}u_i=G_1=0$ and {$u_{li}=W_{li}+O(1+|\nabla u|^2)=O(G)$.} 
Recall $\nu=\epsilon e^{-2ku}\ge0$ and $F(W)=\frac{\sigma_k(W)-\nu}{\sigma_{k-1}(W)}$. It holds that
$$\frac{\partial^{2}F}{\partial W_{ij}\partial u}=-\frac{\partial \sigma_{k-1}(W)}{\partial W_{ij}}\frac{2k\nu }{\sigma_{k-1}^2(W)},$$
and hence,
\begin{align}\label{third bad term}
 -2\sum_{i,j=1}^{n}\frac{\partial^{2}F}{\partial W_{ij}\partial u}W_{ij,1}u_{1}\nonumber
= & \frac{4k\nu}{\sigma_{k-1}^{2}}(\sigma_{k-1}(W))_{1}u_{1}\nonumber\\
\ge & \nu\left(-\frac{(\sigma_{k-1}(W))_{1}(\sigma_{k-1}(W))_{1}}{\sigma_{k-1}(W)^{3}}-\frac{4k^{2}u_{1}^{2}}{\sigma_{k-1}(W)}\right).
\end{align}

Since  {$\frac{\sigma_{k}(W)}{\sigma_{k-1}(W)}$ and $(\sigma_{k-1}(W))^{\frac{1}{k-1}}$ are }concave for $W\in\Gamma_{k-1}^{+}$, we have 
\begin{equation}\label{goodthirdterm}
-\sum_{i,j,s,l=1}^{n}\frac{\partial^{2}F}{\partial W_{ij}\partial W_{sl}}W_{ij,1}W_{sl,1}\ge
\frac{k}{k-1}\nu\frac{(\sigma_{k-1}(W))_{1}(\sigma_{k-1}(W))_{1}}{\sigma_{k-1}(W)^{3}}.
\end{equation}
See the proof of \eqref{goodthirdterm} in \cite{GLL,GuanZhang} for example.

Inserting (\ref{eq:third derivatives bad term}), \eqref{third bad term} and  \eqref{goodthirdterm} into (\ref{eq:final 1}), we get 
\begin{equation}
\begin{aligned}0\geq & \sum_{i,j=1}^{n}e^{2u}F^{ij}\widetilde{G}_{ij}-\widetilde{G}_{t}\\
\ge & -(8e^{2u}u_{1}^{2}F+2e^{2u}u_{11}F)-4me^{2u}|\nabla u|^{2}F\\
 & +e^{2u}\sum_{i=1}^{n}F^{ii}\sum_{l=1}^{n}u_{1l}^{2}+2me^{2u}\sum_{i,j,l=1}^{n}F^{ij}W_{li}W_{lj}\\
 & -e^{2u}\frac{2k\nu}{\sigma_{k-1}(W)}u_{11}(1+\frac{1}{8k})-Ce^{2u}G(1+|\nabla u|^2)\sum_{i=1}^{n}F^{ii},
\end{aligned}
\label{eq:final 1-1}
\end{equation}
where we have used that {$u_{11}\ge 104nk|\nabla u|^{2}$}.

By (\ref{eq:alge 1}),(\ref{eq:alg 2}) and (\ref{eq:alg5}), we obtain 
\begin{equation}
\sum_{i,j,l=1}^{n}F^{ij}W_{li}W_{lj}\ge\frac{k}{n-k+1}\frac{\sigma_{k}(W)^{2}}{\sigma_{k-1}(W)^{2}}+\frac{(k-1)}{n}\frac{\nu\sigma_{1}(W)}{\sigma_{k-1}(W)}\label{eq:FiiWii^2-1}
\end{equation}
and also
\begin{equation}\label{k2}
\sum_{i=1}^{n}F^{ii}W_{ii}^{2}\ge\frac{2\sigma_{2}(W)^{2}}{(n-1)\sigma_{1}(W)^{2}}+\frac{\nu\sum_{i=1}^{n}W_{ii}^{2}}{\sigma_{1}(W)^{2}}\quad\text{for}\quad k=2.
\end{equation}
Due to \eqref{eq:F^ii trace} and \eqref{eq:garding type}, it holds that 
\[
\sum_{i=1}^{n}F^{ii}\ge\frac{n-k+1}{k}+(n-k+2)\frac{\nu\sigma_{k-2}}{\sigma_{k-1}^{2}}.
\]
Then, by the Cauchy-Schwarz inequality and \eqref{eq:FiiWii^2-1}, we have 
\begin{align}
 & -3u_{11}\left|\frac{\sigma_{k}(W)}{\sigma_{k-1}(W)}\right|+m\sum_{i,j,l=1}^{n}F^{ij}W_{li}W_{lj}+\frac{1}{2}\sum_{i=1}^{n}F^{ii}\sum_{l=1}^{n}u_{1l}^{2}\nonumber \\
\ge & -3u_{11}\left|\frac{\sigma_{k}(W)}{\sigma_{k-1}(W)}\right|+\frac{mk}{n-k+1}\frac{\sigma_{k}(W)^{2}}{\sigma_{k-1}(W)^{2}}+m\frac{(k-1)}{n}\frac{\nu\sigma_{1}(W)}{\sigma_{k-1}(W)}\nonumber \\
 & +\frac{1}{2}(\frac{n-k+1}{k})u_{11}^{2}\nonumber \\
\ge &(\sqrt{2m }-3)\left|\frac{\sigma_{k}(W)}{\sigma_{k-1}(W)}\right|+ m\frac{(k-1)}{n}\frac{\nu\sigma_{1}(W)}{\sigma_{k-1}(W)}\nonumber \\
\ge & m\frac{(k-1)}{n}\frac{\nu\sigma_{1}(W)}{\sigma_{k-1}(W)}.\label{eq:inequality 1}
\end{align}

Now  at $(x_{0},t_{0})$, since we assume that {$u_{11}\ge 104nk|\nabla u|^{2}$}, 
using the definition of $F$ in \eqref{eq:final 1-1} and  \eqref{eq:inequality 1}
we have 
\begin{align*}
0\ge & -(8u_{1}^{2}F+2u_{11}F)-4m|\nabla u|^{2}F +\sum_{i=1}^{n}F^{ii}\sum_{l=1}^{n}u_{1l}^{2}+2m\sum_{i,j,l=1}^{n}F^{ij}W_{li}W_{lj}\\
 & -\frac{2k\nu}{\sigma_{k-1}(W)}u_{11}(1+\frac{1}{8k})-CG(1+|\nabla u|^2)\sum_{i=1}^{n}F^{ii}\\
\ge & -(8u_{1}^{2}+2u_{11}+4m|\nabla u|^{2})\frac{\sigma_{k}(W)}{\sigma_{k-1}(W)}-\frac{\nu}{\sigma_{k-1}}(2k-1)u_{11}\\
 & +\sum_{i=1}^{n}F^{ii}\sum_{l=1}^{n}u_{1l}^{2}+2m\sum_{i,j,l=1}^{n}F^{ij}W_{li}W_{lj}-CG(1+|\nabla u|^2)\sum_{i=1}^{n}F^{ii}\\
\ge & -3u_{11}\left|\frac{\sigma_{k}(W)}{\sigma_{k-1}(W)}\right|-\frac{\nu}{\sigma_{k-1}}(2k-1)u_{11}+\sum_{i=1}^{n}F^{ii}\sum_{l=1}^{n}u_{1l}^{2}+2m\sum_{i,j,l=1}^{n}F^{ij}W_{li}W_{lj}-CG(1+\nabla u|^2)\sum_{i=1}^{n}F^{ii}\\
\ge & -\frac{2k\nu}{\sigma_{k-1}(W)}W_{11}+\frac{1}{2}\sum_{i=1}^{n}F^{ii}\sum_{l=1}^{n}u_{1l}^{2}+m\sum_{i,j,l=1}^{n}F^{ij}W_{li}W_{lj}\\
 & -CG(1+|\nabla u|^2)\sum_{i=1}^{n}F^{ii}+m\frac{(k-1)}{n}\frac{\nu\sigma_{1}(W)}{\sigma_{k-1}(W)},
\end{align*}
where the last inequality holds due to (\ref{eq:inequality 1}).

For $k\ge3$, since $W_{11}\le\sigma_{1}(W)$ due to $W\in \Gamma_2^+$ and we have 
\begin{align*}
0\ge & -\frac{2k\nu}{\sigma_{k-1}}W_{11}+\frac{1}{2}\sum_{i=1}^{n}F^{ii}\sum_{l=1}^{n}u_{1l}^{2}+m\sum_{i,j,l=1}^{n}F^{ij}W_{li}W_{lj}\\
 & -CG(1+|\nabla u|^2)\sum_{i=1}^{n}F^{ii}+m\frac{(k-1)}{n}\frac{\nu\sigma_{1}(W)}{\sigma_{k-1}(W)}\\
\ge & \frac{1}{2}\sum_{i=1}^{n}F^{ii}\sum_{l=1}^{n}u_{1l}^{2}-CG(1+|\nabla u|^2)\sum_{i=1}^{n}F^{ii}\\
\ge & \frac{G^2}{8}\sum_{i=1}^{n}F^{ii}-CG(1+|\nabla u|^2)\sum_{i=1}^{n}F^{ii},
\end{align*}
where we have used $u_{11}^{2}\ge G^2/4$. Thus, we have $G\le C(1+|\nabla u|^2)\le C$.

For $k=2$, by \eqref{k2} we have 
\begin{align*}
0\ge & -\frac{2k\nu}{\sigma_{1}(W)}W_{11}+\frac{1}{2}\sum_{i=1}^{n}F^{ii}\sum_{l=1}^{n}u_{1l}^{2}+m\sum_{i,j,l=1}^{n}F^{ij}W_{li}W_{lj}\\
 & -CG(1+|\nabla u|^2)\sum_{i=1}^{n}F^{ii}+m\frac{(k-1)}{n}\frac{\nu\sigma_{1}(W)}{\sigma_{k-1}(W)}
 \\
\ge & -\frac{2k\nu}{\sigma_{1}(W)}W_{11}+\frac{\nu m\sum_{i=1}^{n}W_{ii}^{2}}{\sigma_{1}(W)^{2}}+m\frac{(k-1)}{n}\nu\\
 & +\frac{1}{2}\sum_{i=1}^{n}F^{ii}\sum_{l=1}^{n}u_{1l}^{2}-CG(1+|\nabla u|^2)\sum_{i=1}^{n}F^{ii}+\frac{2m\sigma_{2}(W)^{2}}{(n-1)\sigma_{1}(W)^{2}}\\
\ge & \frac{1}{2}\sum_{i=1}^{n}F^{ii}\sum_{l=1}^{n}u_{1l}^{2}-CG(1+|\nabla u|^2)\sum_{i=1}^{n}F^{ii}\\
\ge & \frac{G^2}{8}\sum_{i=1}^{n}F^{ii}-CG(1+|\nabla u|^2)\sum_{i=1}^{n}F^{ii},
\end{align*}
where in the third inequality we have used that  $-\frac{2k\nu}{\sigma_{1}(W)}W_{11}+\frac{\nu m\sum_{i=1}^{n}W_{ii}^{2}}{\sigma_{1}(W)^{2}}+m\frac{(k-1)}{n}\nu\ge 0$.
Finally, we have $G\le C(1+|\nabla u|^2)\le C$ and complete the proof.
\end{proof}

\section{Proof of the main theorems}\label{Section:Proof of Main theorems}

\subsection{\texorpdfstring{$C^2$}{C2} estimates for \texorpdfstring{\eqref{eq:pertubed flow}}{(perturbed flow)}}

We now combine the second derivative estimates from the previous section with the gradient bound to obtain a uniform $C^{0}$ bound and hence a uniform $C^{2}$ bound.
Since flow \eqref{eq:pertubed flow} preserves $\int_{M}\sigma_{k-1}(g)\,dv_{g}$ (instead of the volume),  the $C^{0}$ estimate requires an additional argument.

\begin{thm}\label{cor:aprioriestimatesforperturbedflow}
On a locally conformally flat manifold $(M,g_{0})$, let $g=e^{-2u}g_{0}\in \mathcal{C}_{k-1}$ satisfy \eqref{eq:pertubed flow} for some $\rho>0$.
Then
\[
\|u\|_{C^{2}}\le C,
\]
where $C$ depends only on $(M,g_0)$ and is independent of $\epsilon$.
\end{thm}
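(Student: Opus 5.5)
Since Lemma~\ref{lem:gradient est} gives $|\nabla u|\le C$ independently of $\epsilon$, $s_\epsilon$ and $T$, and Theorem~\ref{thm:C2 estimates} bounds $|\nabla^2 u|$ in terms of $\|\nabla u\|_{C^0}$ alone, everything reduces to a uniform $C^0$ bound for $u$. For this I would show that $\max_M u(t)-\min_M u(t)\le C\,\mathrm{diam}(M,g_0)$, which follows from the gradient bound. It then suffices to control a single value of $u$, say $\max_M u$ or $\min_M u$, at each time, uniformly in $\epsilon$ and $t$. The normalization comes from the conserved quantity in Lemma~\ref{lem:fucntional increase and r constant pertubed flow}:
\[
\int_M \sigma_{k-1}(g)\,dv_g=\int_M e^{(2(k-1)-n)u}\sigma_{k-1}(W)\,dv_{g_0}\equiv \int_M\sigma_{k-1}(g_0)\,dv_{g_0}=:c_0>0,
\]
where $c_0>0$ because $g_0\in\mathcal C_{k-1}$.

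The key steps, in order, are as follows. First, using Theorem~\ref{thm:C2 estimates} and the gradient bound, $|W|\le C$ with $C$ independent of $\epsilon$. Hence $0<\sigma_{k-1}(W)\le C_1$, giving an upper bound on the integrand's $W$-factor. Second, the $W$-factor also needs a lower bound. I plan to integrate: $\int_M\sigma_{k-1}(W)\,dv_{g_0}$ is bounded below by a positive constant. To obtain this, I would use Lemma~\ref{lem:Garding} (Gårding inequality \eqref{eq:garding inequality} with $\mu=\lambda(S_{g_0})$) together with integration by parts on the divergence-free Newton tensor structure, valid on a locally conformally flat manifold. This gives a pointwise-averaged lower bound of the form $\int_M T_{k-2}(W)^{ij}(S_{g_0})_{ij}\ge c$, from which $\int_M\sigma_{k-1}(W)\ge c$ follows. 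Alternatively, and more simply, observe that $\sigma_{k-1}(W)>0$ and that the Hessian term integrates by parts against $T_{k-2}$.

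Third, I combine the two bounds with the oscillation control. If $n\neq 2(k-1)$, then
\[
c_0=\int_M e^{(2k-2-n)u}\sigma_{k-1}(W)\,dv_{g_0}
\]
lies between $c\,e^{(2k-2-n)\max u}$ and $C e^{(2k-2-n)\min u}$ (up to reversal according to sign). Together with $\operatorname{osc}u\le C$, this pins $u$ in a bounded interval.

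The main obstacle is the lower bound for $\int_M\sigma_{k-1}(W)\,dv_{g_0}$ in the second step. If that route fails, I would instead argue at a minimum point $x_1$ of $u$ (or a maximum, depending on the sign needed). At a minimum, $\nabla u=0$ and $\nabla^2u\ge0$, so $W\ge S_{g_0}$ in the cone sense, and since $\lambda(S_{g_0})\in\Gamma_{k-1}^+$, monotonicity gives $\sigma_{k-1}(W)(x_1)\ge\sigma_{k-1}(S_{g_0})(x_1)\ge c>0$. The Hessian bound then keeps $\sigma_{k-1}(W)\ge c/2$ on a $g_0$-ball of uniform radius around $x_1$. That yields $c_0\ge c\,e^{(2k-2-n)\cdot(\text{value near }\min u)}$, and the upper bound from the first step supplies the other inequality. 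With oscillation control, this gives $|u|\le C$.

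In the critical case $2(k-1)=n$, the exponent vanishes and the conserved quantity gives no information. There I would use instead the monotone functional $\mathscr E_{n/2,\epsilon}$ (when $k=n/2$) or the volume evolution together with $J^k_\epsilon$. The statement's hypothesis $k\neq n/2+1$ in the main theorem suggests this case is excluded anyway, so I would restrict to $n\neq 2(k-1)$.
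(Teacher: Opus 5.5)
The reduction to a $C^0$ bound, the conserved quantity $\int_M\sigma_{k-1}(g)\,dv_g$, and the pointwise bound $\sigma_{k-1}(W)\le C_1$ are all fine. For $2(k-1)<n$, your Steps 1 and 3 give the upper bound on $u$ exactly as in the paper's Case 1.

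\textbf{Gap 1: Step 2.} The uniform positive lower bound on $\int_M\sigma_{k-1}(W)\,dv_{g_0}$ is what you need for the lower bound on $u$ when $2(k-1)<n$, and for the upper bound when $2(k-1)>n$. Neither route you sketch proves it.

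\emph{The G\aa rding route is circular.} Inequality \eqref{eq:garding inequality} with $\mu=\lambda(S_{g_0})$ only gives $T_{k-2}(W)^{ij}(S_{g_0})_{ij}\ge (k-1)\sigma_{k-1}(S_{g_0})^{\frac{1}{k-1}}\sigma_{k-1}(W)^{\frac{k-2}{k-1}}$. This degenerates precisely when $\sigma_{k-1}(W)$ is small. Moreover, the Newton tensor is divergence-free with respect to $g$, not $g_0$. Integrating by parts in $g$ produces the weight $e^{(2(k-1)-n)u}$ and gradient terms whose coefficient $n-2(k-1)$ changes sign (compare Proposition~\ref{thm:intsigmak}), not $\int_M\sigma_{k-1}(W)\,dv_{g_0}$.

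\emph{The minimum-point route fails.} A bound on $|\nabla^2u|$ gives no modulus of continuity for $W$, so $\sigma_{k-1}(W)(x_1)\ge c$ cannot be spread to a ball of uniform radius. That would need $C^{2,\alpha}$ control uniform in $\epsilon$, which is unavailable: parabolicity degenerates as $\epsilon\to0$, and Lemma~\ref{prop: uniform parabolicity for perturbed flow} depends on $\epsilon$ and itself uses the present theorem.

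\emph{What the paper does instead.} It uses global nondegeneracy input from \cite{GWDuke}.
\begin{itemize}
\item The Sobolev-type inequalities between $\int_M\sigma_{k-1}(g)\,dv_g$ and $\operatorname{Vol}(g)$ on $\mathcal C_{k-1}$, combined with conservation, give $\operatorname{Vol}(g)\le C$ and hence $u\ge -C$.
\item For $2(k-1)>n$, the upper bound comes from a blow-up argument. If $\inf_M u\to\infty$, then $v=u-\inf_M u$ subconverges in $C^{1,\alpha}$ to a $C^{1,1}$ function $v^*$. Meanwhile $\int_M\sigma_{k-1}(g_v)\,dv_{g_v}=e^{(n-2(k-1))\inf_M u}\int_M\sigma_{k-1}(g_0)\,dv_{g_0}\to0$, so $\sigma_{k-1}(e^{-2v^*}g_0)=0$ weakly. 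This contradicts the Guan--Wang result that no such $C^{1,1}$ metric exists when $\mathcal C_{k-1}[g_0]\neq\emptyset$.
\end{itemize}

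\textbf{Gap 2: the critical case $2(k-1)=n$.} You cannot discard it. The theorem carries no such restriction, and $k=\frac n2+1$ is exactly the case used for the reverse Moser--Trudinger inequality in Theorem~\ref{thm:generalized Sobolev inequality-high dim}.

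Your proposed tool is also for the wrong index. $\mathscr E_{n/2,\epsilon}$ is monotone when $k=\frac n2$. For $k-1=\frac n2$, the relevant fact (Lemma~\ref{lem:fucntional increase and r constant pertubed flow}) is that the unperturbed $\mathscr E_{n/2}(g)$ is conserved along \eqref{eq:pertubed flow}.

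The paper handles this case in two steps.
\begin{itemize}
\item Conservation of $\mathscr E_{n/2}(g)$ plus the Moser--Trudinger inequality of \cite{GWDuke} bound the volume, hence $u\ge -C$.
\item For the upper bound, assume $\inf_M u\ge 0$. Then $\sigma_{n/2}(g_s)>0$ along $g_s=e^{-2su}g_0$, and $\int_M\sigma_{n/2}(g)\,dv_g=c_0>0$ is conformally invariant. Together these give $\mathscr E_{n/2}(g)\le -c_0\inf_M u$, which bounds $\inf_M u$ from above.
\end{itemize}
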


\begin{proof}
We first prove a uniform upper bound for the volume $\operatorname{Vol}(g)$; together with $|\nabla u|\le C$, this implies a uniform lower bound for $u$.

Since $g\in\mathcal{C}_{k-1}$, the inequalities of Guan--Wang~\cite{GWDuke} give:
if $k-1<\frac{n}{2}$, then
\[
\int_{M}\sigma_{k-1}(g)\,dv_{g}\ge C\,\operatorname{Vol}(g)^{\frac{n-2(k-1)}{n}};
\]
if $k-1>\frac{n}{2}$, then
\[
\operatorname{Vol}(g)^{\frac{2(k-1)-n}{n}}\int_{M}\sigma_{k-1}(g)\,dv_{g}\le C_{s};
\]
and if $k-1=\frac{n}{2}$, then
\[
\mathscr{E}_{n/2}(g)\ge C_s\,\log\frac{ \operatorname{Vol}(g)}{\operatorname{Vol}(g_0)}.
\]
Because $\int_{M}\sigma_{k-1}(g)\,dv_g$ (or $\mathscr{E}_{n/2}(g)$ when $k-1=\frac{n}{2}$) is constant along \eqref{eq:pertubed flow} by Lemma~\ref{lem:fucntional increase and r constant pertubed flow}, these inequalities yield, in all cases,
\begin{equation}\label{volume is uniformly bounded}
\operatorname{Vol}(g)=\int_M e^{-nu}\,dv_{g_{0}}\le C.
\end{equation}
Consequently, using $|\nabla u|\le C$, we obtain
\begin{equation}\label{lowerbound of u}
u\ge -C.
\end{equation}

It remains to prove a uniform upper bound for $u$.

\noindent \emph{Case 1: $k-1<\frac{n}{2}$.}
By Theorem~\ref{thm:C2 estimates} and Lemma~\ref{lem:gradient est},
\begin{align*}
\int_M\sigma_{k-1}(g_0)\,dv_{g_0}
&=\int_M\sigma_{k-1}(g)\,dv_{g}\\
&=\int_{M}e^{(2(k-1)-n)u}\,\sigma_{k-1}\Bigl(\nabla^{2}u+du\otimes du-\frac{|\nabla u|^{2}}{2}g_{0}+S_{g_{0}}\Bigr)\,dv_{g_{0}}\\
&\le C\int_{M}e^{(2(k-1)-n)u}\,dv_{g_{0}}.
\end{align*}
Since $2(k-1)-n<0$, this implies a uniform upper bound for $u$.

\medskip

\noindent \emph{Case 2: $k-1>\frac{n}{2}$.}
Assume for contradiction that $u$ is unbounded above.
Along the flow we have $\sup_M\bigl(|\nabla u|+|\nabla^{2}u|\bigr)\le C$, hence $\inf_{M}u\to\infty$ as $t\to T^{*}$.
Define
$$
v(x,t):=u(x,t)-\inf_{M}u(\cdot,t),
$$
so that $\|v\|_{C^{2}(M)}\le C$.
Passing to a subsequence $t_j\to T^{*}$, we may assume $v(\cdot,t_j)\to v^{*}$ in $C^{1,\alpha}$ for any $0\le\alpha<1$, for some $C^{1,1}$ function $v^{*}$.
Let $g_{v}=e^{-2v}g_{0}$.
Then
\begin{align*}
\int_{M}\sigma_{k-1}(g_{v})\,dv_{g_{v}}
&=e^{-(2(k-1)-n)\inf_{M}u}\int_{M}\sigma_{k-1}(g)\,dv_{g}\\
&=\int_{M}\sigma_{k-1}(g_0)\,dv_{g_0}\,e^{-(2(k-1)-n)\inf_{M}u}\longrightarrow 0,
\qquad \text{as } t\to T^{*}.
\end{align*}
Since $g_{v}\in\mathcal{C}_{k-1}$ for $t\in[0,T^{*})$, it follows that $\sigma_{k-1}(g_{v^{*}})=0$ in the weak sense.
This contradicts a result in \cite{GWDuke}, which states that if $\mathcal{C}_{k-1}[g_0]$ is non-empty, then there is no $C^{1,1}$ metric $\tilde g\in[g_0]$ with $\sigma_{k-1}(\tilde g)=0$.

\medskip

\noindent \emph{Case 3: $k-1=\frac{n}{2}$.}
Recall
$$
\mathscr{E}_{n/2}(g)=-\int_{0}^{1}\int_{M}\sigma_{n/2}(g_{s})\,u\,dv_{g_{s}}\,ds,
\qquad g_s=e^{-2su}g_0.
$$
In this case we know that the flow preserves $\mathcal{E}_{n\slash 2}$.
Moreover, by the Gauss--Bonnet--Chern formula, $\int_{M}\sigma_{n/2}(g)\,dv_{g}=c_0>0$ is a fixed constant in the given conformal class.
Since we only need to show that $\inf u$ is bounded from above, we may assume $\inf _M u \ge 0 $.  Since $g_0, g\in \mathcal C_{k-1}[g_0]$,   it holds that  $\sigma_{n/2}(g_s)>0$  by the convexity, and then,
\eq{
\mathcal{E}_{n\slash 2} (g) \le -c_0\inf u .
}
Hence $\inf u$ is bounded from above. 
Together with $|\nabla u|\le C$, this gives a uniform $C^{0}$ bound for $u$.

Combining the $C^{0}$ bound with Theorem~\ref{thm:C2 estimates} and Lemma~\ref{lem:gradient est} completes the proof.
\end{proof}

\begin{rem}
The above argument shows that if $|\nabla^{2}u|+|\nabla u|^{2}$ is uniformly bounded,  and along the flow $\int_{M}\sigma_{k-1}(g)\,dv_g$ is constant for  or $\mathscr{E}_{n/2}(g)$ is constant for $k-1= \frac n2$, then $\operatorname{Vol}(g)$ is uniformly bounded from above and bounded away from $0$.
\end{rem}
\subsection{Positivity of \texorpdfstring{$\s_{k-1}$}{sigma k-1} along flows}

In this subsection we prove that the perturbed flow \eqref{eq:pertubed flow} preserves the positivity of $\sigma_{k-1}(g)$. Equivalently, together with Theorem \ref{cor:aprioriestimatesforperturbedflow}, the equation remains uniformly parabolic for as long as the solution exists.

\begin{lem}\label{prop: uniform parabolicity for perturbed flow}
Let $g$ satisfy \eqref{eq:pertubed flow}.
Then for any $\epsilon>0$, there exists a constant $C_0=C_{0}(\epsilon, g_0)>0$, independent of $T<T^{*}$, such that for all $t\in[0,T]$,
\[
\sigma_{k-1}(g)\ge C_{0}.
\]
\end{lem}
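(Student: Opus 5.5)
The plan is to read off a positive lower bound for $\sigma_{k-1}(g)$ from the equation itself, using the a priori bounds already available. By Theorem~\ref{cor:aprioriestimatesforperturbedflow}, $\|u\|_{C^2}\le C$ on $[0,T]$, with $C$ independent of $T<T^*$ and of $\epsilon$. Consequently $W$ is bounded, and so are $\sigma_k(W)$, $\sigma_{k-1}(W)$, and $\nu=\epsilon e^{-2ku}$. In particular $\nu\ge \epsilon e^{-2kC}=:\nu_0>0$. It therefore suffices to bound $u_t$, equivalently $F=(\sigma_k(W)-\nu)/\sigma_{k-1}(W)$, from below. Near the boundary of $\Gamma_{k-1}^+$ (with $W$ bounded), $\sigma_k$ stays bounded while $\sigma_{k-1}\to0$. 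Hence the numerator is at most $C-\nu_0$, but this is not automatically negative, so a lower bound on $F$ does not yet give the conclusion.

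The key step is a parabolic maximum principle for $u_t$. Differentiating \eqref{eq:parabolic equation} in $t$ gives
\[
\partial_t(u_t)=e^{2u}F^{ij}\bigl(\nabla_{ij}u_t+2u_i\nabla_j u_t-|\nabla u|^2\text{-terms}\cdot\nabla u_t\bigr)+\bigl(2e^{2u}F+e^{2u}F_u\bigr)u_t-\frac{d s_\epsilon}{dt}.
\]
All first-order coefficients, as well as the zeroth-order coefficient $2e^{2u}F+2k\nu e^{2u}/\sigma_{k-1}$, are well defined. The difficulty is that the zeroth-order coefficient could blow up as $\sigma_{k-1}\to0$, and $s_\epsilon$ is nonlocal, though bounded by the $C^2$ estimate. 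To avoid both issues I would instead consider $\Phi=e^{2u}F=u_t+s_\epsilon$ directly, and look at a point where $\Phi$ attains a new minimum that is very negative.

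At such a point, suppose $\Phi\to-\infty$. Since the numerator is bounded above, this forces $\sigma_{k-1}\to0$ there, and the term $F_u u_t$ then carries a sign that helps. Concretely, set $\psi=-\Phi$, which is large and positive. The evolution of $\psi$ has the form
\[
\psi_t \le L\psi - 2e^{2u}\bigl(F+\tfrac{k\nu}{\sigma_{k-1}}\bigr)(\psi - s_\epsilon)+C.
\]
Here $L$ is the elliptic operator, and $F+k\nu/\sigma_{k-1}\approx -\nu/\sigma_{k-1}$ is very negative when $\psi$ is large. If this sign works out, the zeroth-order term is negative, which gives $\psi\le C(\epsilon)$ and hence $F\ge -C(\epsilon)$.

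The final step converts this into the desired bound. From $F\ge -C(\epsilon)$ we get
\[
\sigma_k-\nu\ge -C(\epsilon)\,\sigma_{k-1}.
\]
If $\sigma_{k-1}$ were very small, then by Maclaurin (Lemma~\ref{lem:algebra}) $\sigma_k\le C\sigma_{k-1}^{k/(k-1)}$ is also small, so the left side is $\le-\nu_0/2$ while the right side tends to $0$, a contradiction. This yields $\sigma_{k-1}\ge C_0(\epsilon,g_0)$. For the Maclaurin step, note that $\sigma_k$ may be negative, but only an upper bound on $\sigma_k$ is needed, and that holds in $\Gamma_{k-1}^+$ by \eqref{eq:alg 2} together with boundedness of $W$.

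The main obstacle is the sign bookkeeping in the evolution of $\Phi$. The quadratic gradient terms and the nonlocal $\dot s_\epsilon$ must be controlled; I expect $\dot s_\epsilon$ to be bounded using the $C^2$ bound and the conserved $\int\sigma_{k-1}$. The evolution is nonlinear through $F^{ij}$, since $\partial_t F=F^{ij}\partial_tW_{ij}+F_u u_t$ with $\partial_tW$ linear in $\nabla^2u_t,\nabla u_t$. A fallback is to apply the maximum principle directly to $\log\sigma_{k-1}$ using concavity of $\sigma_{k-1}^{1/(k-1)}$, as in \eqref{goodthirdterm}.
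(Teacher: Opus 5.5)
\textbf{There is a genuine gap:} the key step, the minimum principle for $\Phi=e^{2u}F=u_t+s_\epsilon$, fails because the zeroth-order term has the wrong sign.

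You correctly identify the coefficient $2e^{2u}F+2k\nu e^{2u}/\sigma_{k-1}$. Since $F_u=2k\nu/\sigma_{k-1}>0$ and $u_t=\Phi-s_\epsilon$, the evolution is
\[
\partial_t\Phi=e^{2u}F^{ij}\partial_tW_{ij}+\frac{2e^{2u}\bigl(\sigma_k+(k-1)\nu\bigr)}{\sigma_{k-1}}\,(\Phi-s_\epsilon).
\]
Hence $F+k\nu/\sigma_{k-1}=(\sigma_k+(k-1)\nu)/\sigma_{k-1}$, not $\approx-\nu/\sigma_{k-1}$; you dropped the $k\nu$.

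In the degenerate regime $\sigma_{k-1}\to0$, $\sigma_k\to0$ (e.g.\ $W\to0$), this coefficient is $\approx(k-1)\nu/\sigma_{k-1}>0$, and in general it has no sign. So $F_u u_t<0$ at a negative minimum hurts rather than helps, and the sign of this term in your displayed inequality for $\psi$ is reversed. At a new negative minimum of $\Phi$ you only get $\Phi_t\gtrsim-2(k-1)\Phi^2$, using $\nu/\sigma_{k-1}\approx e^{-2u}|\Phi|$. This Riccati-type inequality permits $\Phi\to-\infty$ in finite time, so no bound independent of $T$ follows.

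The fallback via $\log\sigma_{k-1}$ is only mentioned. Its evolution involves $T_{k-2}^{ij}\nabla_{ij}F$, hence third derivatives of $u$ and a concavity term of unfavourable sign, and it is not clear how it would close. Your last step (Maclaurin plus $\nu\ge\nu_0$) is fine and matches how the paper concludes, but it needs a lower bound on $F$ that your argument does not supply.

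\textbf{What is missing} is a genuinely new positive term, and it must come from the reference metric: you never use $\sigma_{k-1}(g_0)>0$ beyond the $C^2$ bounds. The paper applies the minimum principle to
\[
H=\frac{\sigma_k(g)-\epsilon}{\sigma_{k-1}(g)}\,e^{\varphi(u)},\qquad \varphi(s)=e^{-As-B},
\]
chosen so that $\varphi'<0$ and $\varphi'+\varphi''-(\varphi')^2>0$. The second condition disposes of the $u_iu_j$ terms, since $H<0$.

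Differentiating the weight twice produces $-\varphi'e^{-\varphi}H\,T^{ij}u_{ij}$. Writing $u_{ij}$ as $S_{ij}-(S_{g_0})_{ij}$ plus gradient terms yields $+\varphi'e^{-\varphi}H\,T^{ij}(S_{g_0})_{ij}$, with $\varphi'H>0$. The part $\epsilon T_{k-2}^{ij}/\sigma_{k-1}^2$ of $T^{ij}$, paired with $S_{g_0}$, is bounded below via the G\aa rding-type inequality \eqref{eq:garding inequality} by $\epsilon(k-1)e^{2u}\sigma_{k-1}(g_0)^{\frac{1}{k-1}}\sigma_{k-1}(g)^{-\frac{k}{k-1}}$.

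This good term has order $\epsilon|H|\sigma_{k-1}^{-k/(k-1)}$. It dominates all bad terms, including exactly the reaction terms you found (of order $|H|^2\lesssim C|H|/\sigma_{k-1}$), because $\frac{k}{k-1}>1$. That gives $\sigma_{k-1}\ge C_0$ at the minimum point of $H$. Minimality of $H$ together with Maclaurin then transfers the bound to all of $M\times[0,T]$.
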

\begin{proof}
The argument follows the maximum-principle strategy of Guan--Wang~\cite{GWDuke}. Since we only assume $g(t)\in\mathcal{C}_{k-1}$, we introduce an auxiliary function adapted to this weaker information. 

Fix $T<T^{*}$ and define on $M\times[0,T]$
\[
H=\frac{\sigma_{k}(g)-\rho}{\sigma_{k-1}(g)}e^{\varphi(u)}.
\]
Here $\varphi(s)=e^{-As-B}$ for {$s\in\bigl[-\max_{M\times[0,T]}|u|,\max_{M\times[0,T]}|u|\bigr]$}, and $A,B>0$ are chosen so that
\[
\varphi'(s)=-Ae^{-As-B}<0
\qquad\text{and}\qquad
\varphi'(s)+\varphi''(s)-(\varphi'(s))^{2}>c_*>0.
\]
Let $(x_0,t_0)\in M\times[0,T]$ be a point where $H$ attains its minimum. Then
\begin{equation}\label{eq:x1}
\frac{\sigma_{k}(g)-\rho}{\sigma_{k-1}(g)}e^{\varphi(u)}
\ge \frac{\sigma_{k}(g)(x_{0})-\rho}{\sigma_{k-1}(g)(x_{0})}e^{\varphi(u)(x_{0})}.
\end{equation}
By the Maclaurin inequality and the uniform $C^{2}$ bound, \eqref{eq:x1} yields
\begin{align*}
C(n,k)\frac{\sigma_{k-1}^{\frac{k}{k-1}}(g)}{\sigma_{k-1}(g)}
\ge \frac{\sigma_{k}(g)}{\sigma_{k-1}(g)}
\ge \frac{\rho}{\sigma_{k-1}(g)}-\frac{C}{\sigma_{k-1}(g)(x_{0})}.
\end{align*}
Consequently,
\[
\frac{\rho}{\sigma_{k-1}(g)}\le \frac{C}{\sigma_{k-1}(g)(x_{0})}+C.
\]
Thus it suffices to show that $\sigma_{k-1}(g)(x_{0})\ge C_{0}$; once this is established, the preceding inequality implies a uniform lower bound $\sigma_{k-1}(g)\ge C_{1}>0$ on $M$.

{Without loss of generality, we assume $H(x_0,t_0)<0$. Otherwise, it follows again from the Maclaurin inequality at point $(x_0,t_0)$
\begin{align*}
C(n,k){\sigma_{k-1}^{\frac{k}{k-1}}(g)}
\ge {\sigma_{k}(g)}\ge \epsilon.
\end{align*}
At the same time, when $\sigma_{k}(g)(x_0,t_0)\ge 0$, we have from the Maclaurin inequality
$$
H(x_0,t_0)=O(1)-\frac{\epsilon e^{\varphi(u)(x_0,t_0)}}{\sigma_{k-1}(g)(x_0,t_0)}.
$$
When $\sigma_{k}(g)(x_0,t_0)< 0$, we infer
$$
-H(x_0,t_0)\ge\frac{\epsilon e^{\varphi(u)(x_0,t_0)}}{\sigma_{k-1}(g)(x_0,t_0)}.
$$
Therefore, we have
$$
-H(x_0,t_0)\ge \frac{\epsilon e^{\varphi(u)(x_0,t_0)}}{2\sigma_{k-1}(g)(x_0,t_0)}\ge \frac{C\epsilon }{2\sigma_{k-1}(g)(x_0,t_0)},
$$
provided $\sigma_{k-1}(g)(x_0,t_0)$ is sufficiently small.
}

We work in geodesic normal coordinates at $x_{0}$. We may assume that $H(x_{0},t_{0})$ is sufficiently negative and $\sigma_{k-1}(g)(x_0,t_0)$ is sufficiently small; otherwise the desired conclusion already follows.

Set
\begin{align*}
T^{ij}:= \frac{\partial}{\partial S_{ij}}\left(\frac{\sigma_{k}(g)-\rho}{\sigma_{k-1}(g)}\right)
= \frac{ \sigma_{k-1}(g)T_{k-1}(g)^{ij}-\sigma_{k}(g)T_{k-2}(g)^{ij} }{\sigma_{k-1}^{2}(g)}+\rho\frac{T_{k-2}(g)^{ij}}{\sigma_{k-1}(g)^{2}}.
\end{align*}
At $(x_{0},t_{0})$, using $\frac{d S_{ij}}{dt}=\nabla^g_{ij}u_t$, we compute
\begin{align}\label{eq:minimum principle}
\frac{dH}{dt}
= {}& \frac{d}{dt}\bigg(\frac{\sigma_{k}(g)-\rho}{\sigma_{k-1}(g)}\bigg)e^{\varphi(u)}+\varphi'u_{t}\frac{\sigma_{k}(g)-\rho}{\sigma_{k-1}(g)}e^{\varphi(u)}\nonumber\\
= {}& e^{\varphi(u)}\sum_{i,j}T^{ij}\nabla_{ij}^{g}\frac{\sigma_{k}(g)-\rho}{\sigma_{k-1}(g)}+2e^{\varphi(u)}u_{t}\frac{\sigma_{k}+\rho(k-1)}{\sigma_{k-1}}+\varphi'e^{\varphi(u)}u_{t}\frac{\sigma_{k}(g)-\rho}{\sigma_{k-1}(g)}\nonumber\\
= {}& \sum_{i,j}e^{\varphi(u)}T^{ij}\left((He^{-\varphi})_{ij}+u_{i}(He^{-\varphi})_{j}+u_{j}(He^{-\varphi})_{i}-\sum_{l}u_{l}(He^{-\varphi})_{l}\delta_{ij}\right)\nonumber\\
&+\varphi'e^{\varphi(u)}(\frac{\sigma_{k}(g)-\rho}{\sigma_{k-1}(g)}-s_{\rho})\frac{\sigma_{k}-\rho}{\sigma_{k-1}}
+2e^{\varphi(u)}(\frac{\sigma_{k}(g)-\rho}{\sigma_{k-1}(g)}-s_{\rho})\frac{\sigma_{k}+\rho(k-1)}{\sigma_{k-1}}.
\end{align}

Set
\[
P:=\sum_{i,j}T^{ij}\left((He^{-\varphi})_{ij}+u_{i}(He^{-\varphi})_{j}+u_{j}(He^{-\varphi})_{j}-\sum_{l}u_{l}(He^{-\varphi})_{l}\delta_{ij}\right).
\]

At $(x_0,t_0)$ we have $H_i=0$. A direct computation gives
\begin{align}
P
= & \sum_{i,j}T^{ij}e^{-\varphi}H_{ij}+\sum_{i,j}e^{-\varphi}HT^{ij}\left(-\varphi'u_{ij}-\varphi''u_{i}u_{j}+(\varphi')^{2}u_{i}u_{j}-2u_{i}u_{j}\varphi'+\varphi'|\nabla u|^{2}\delta_{ij}\right)\nonumber\\
= &\sum_{i,j} T^{ij}e^{-\varphi}H_{ij}+\sum_{i,j}e^{-\varphi}HT^{ij}\bigg(-\varphi'(S_{ij}-u_{i}u_{j}+\frac{|\nabla u|^{2}}{2}\delta_{ij}-S(g_{0})_{ij})-\varphi''u_{i}u_{j}\nonumber\\
 & +(\varphi')^{2}u_{i}u_{j}-2u_{i}u_{j}\varphi'+\varphi'|\nabla u|^{2}\delta_{ij}\bigg) \nonumber \\
= & \sum_{i,j}\left(T^{ij}e^{-\varphi}H_{ij}-\varphi'e^{-\varphi}HT^{ij}S_{ij}+\varphi'e^{-\varphi}HT^{ij}S(g_{0})_{ij}\right)+\sum_{i}e^{-\varphi}HT^{ii}\frac{|\nabla u|^{2}}{2}\varphi'\nonumber\\
 & +\sum_{i,j}e^{-\varphi}HT^{ij}u_{i}u_{j}(-\varphi'-\varphi''+(\varphi')^{2})
 \nonumber \\
\ge  &  \sum_{i,j}\left(T^{ij}e^{-\varphi}H_{ij}-\varphi'e^{-\varphi}HT^{ij}S_{ij}+\varphi'e^{-\varphi}HT^{ij}S(g_{0})_{ij}\right)+\sum_{i}e^{-\varphi}HT^{ii}\frac{|\nabla u|^{2}}{2}\varphi',
 \label{simple expression of P}
\end{align}
where in the last inequality we used the choice of $\varphi$, namely $-\varphi'-\varphi''+(\varphi')^{2}<0$ and $H<0$.

By \eqref{sigmaproperty1} and \eqref{sigmaproperty2}, 
\begin{equation}
\sum_{i,j}T^{ij}S_{ij}=\frac{\sigma_{k}(g)+\rho(k-1)}{\sigma_{k-1}(g)}.\label{eq:trace S}
\end{equation}
By  Lemma \ref{lem:Garding}, we have
\begin{align}
\sum_{i,j}T^{ij}S(g_{0})_{ij}= & \sum_{i,j}\frac{\partial}{\partial S_{ij}}(\frac{\sigma_{k}(g)}{\sigma_{k-1}(g)})S(g_{0})_{ij}-\sum_{i,j}\rho\frac{\partial}{\partial S_{ij}}(\frac{1}{\sigma_{k-1}(g)})S(g_{0})_{ij}\nonumber\\
\ge & e^{2u}\frac{\sigma_{k}(g_{0})}{\sigma_{k-1}(g_{0})}+\rho(k-1)e^{2u}(\frac{1}{\sigma_{k-1}(g)})^{2}\frac{\sigma_{k-1}(g_{0})^{\frac{1}{k-1}}}{\sigma_{k-1}(g)^{\frac{1}{k-1}-1}}\nonumber \\
\ge & -C+\rho(k-1)e^{2u}(\frac{1}{\sigma_{k-1}(g)})^{1+\frac{1}{k-1}}\sigma_{k-1}(g_{0})^{\frac{1}{k-1}}.\label{eq:positive lower bound}
\end{align}

Substituting \eqref{eq:trace S} and \eqref{eq:positive lower bound} into \eqref{simple expression of P} yields
\begin{align*}
P%
\ge {}&\sum_{i,j} e^{-\varphi}T^{ij}H_{ij}-\varphi'e^{-\varphi}H\frac{\sigma_{k}(g)+\rho(k-1)}{\sigma_{k-1}(g)}\\
&+\varphi'e^{-\varphi}H\left(-C+\rho(k-1)e^{2u}\left(\frac{1}{\sigma_{k-1}(g)}\right)^{1+\frac{1}{k-1}}\sigma_{k-1}(g_{0})^{\frac{1}{k-1}}\right).
\end{align*}
Recall that $|s_{\epsilon}=\frac{\int_M (\sigma_{k}(g)-\epsilon)\,dv_{g}}{\int_M \sigma_{k-1}(g)\,dv_{g}}|\le C$ by Theorem \ref{cor:aprioriestimatesforperturbedflow} and Lemma \ref{lem:fucntional increase and r constant pertubed flow}. 
Hence we have at $(x_0,t_0)$
{
\begin{align}
0\ge {}& \frac{dH}{dt}-\sum_{i,j}T^{ij}H_{ij}
\nonumber\\
\ge {}&-\varphi'H\frac{\sigma_{k}(g)+\rho(k-1)}{\sigma_{k-1}(g)}
+\varphi'H\left(-C+\rho(k-1)e^{2u}\left(\frac{1}{\sigma_{k-1}(g)}\right)^{1+\frac{1}{k-1}}\sigma_{k-1}(g_{0})^{\frac{1}{k-1}}\right)
\nonumber\\
&+\varphi'e^{\varphi(u)}\left(\frac{\sigma_{k}(g)-\rho}{\sigma_{k-1}(g)}-s_{\rho}\right)\frac{\sigma_{k}(g)-\rho}{\sigma_{k-1}(g)}
+2e^{\varphi(u)}\left(\frac{\sigma_{k}(g)-\rho}{\sigma_{k-1}(g)}-s_{\rho}\right)\frac{\sigma_{k}(g)+\rho(k-1)}{\sigma_{k-1}(g)}
\nonumber\\
\ge {}& -C\frac{1}{\sigma_{k-1}(g)^2}
+\varphi'e^{2u}H\rho(k-1)\left(\frac{1}{\sigma_{k-1}(g)}\right)^{1+\frac{1}{k-1}}\sigma_{k-1}(g_{0})^{\frac{1}{k-1}}
\nonumber\\
\ge {}& -C\frac{|H(x_{0},t_{0})|}{\sigma_{k-1}(g)}
+\varphi'e^{2u}H\rho(k-1)\left(\frac{1}{\sigma_{k-1}(g)}\right)^{1+\frac{1}{k-1}}\sigma_{k-1}(g_{0})^{\frac{1}{k-1}}
\nonumber
\end{align}
where in the last two inequalities  we have used }Theorem~\ref{cor:aprioriestimatesforperturbedflow} and the assumption that $|H(x_0,t_0)|$ is sufficiently large, thus the remaining terms are all absorbed into  $-C\frac{|H(x_{0},t_{0})|}{\sigma_{k-1}(g)}$. 
It follows  from $\varphi 'e^{2u}<-c_1<0$ that 
\begin{align*}
0\ge {}& |H|\left(-\frac{C}{\sigma_{k-1}}+c_{0}\rho\sigma_{k-1}^{-\frac{k}{k-1}}\right),\qquad \text{for some constant }c_0>0.
\end{align*}
Therefore there exists $C_{0}>0$ such that $\sigma_{k-1}(g)(x_{0},t_0)\ge C_{0}$.

\end{proof}

\subsection{Convergence of flow \texorpdfstring{\eqref{eq:pertubed flow}}{(perturbed flow)}}
\begin{thm}\label{thm:long-time existence for pertubed flow}
Let $(M^{n},g_{0})$ be locally conformally flat and let $k\ge 2$.
Assume that $g_{0}\in \mathcal C_{k-1}$.
For every fixed $\rho>0$, the flow \eqref{eq:pertubed flow} with the initial metric $g_0$ exists for all time and converges to a metric $g^{\rho}_{\infty}\in \mathcal C_{k-1}[g_0]$ satisfying
\begin{equation}\label{limiting equationfinial}
\frac{\sigma_{k}(g^{\rho}_{\infty})-\rho}{\sigma_{k-1}(g^{\rho}_{\infty})}-s_{\rho}=0\quad\text{on }M,
\end{equation}
for some constant $s_\epsilon$.
Moreover, there exists a constant $C>0$, independent of $\rho$, such that
\begin{equation}\label{volume}
\operatorname{Vol}(g^{\rho}_{\infty})\le C.
\end{equation}

If we assume additionally that $Y_{k,k-1}(M,[g_0])>0$, for $k<\frac n 2$, or $\int_M \sigma_k(g_0)\,dv_{g_0}>0$ for $k\ge \frac n 2$, 
then for sufficiently small $\rho$ we have $g^{\rho}_{\infty}\in \mathcal C_k$.
\end{thm}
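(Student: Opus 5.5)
Long-time existence will come from a continuation argument, convergence from the monotone functional, and membership in $\mathcal C_k$ for small $\rho$ from an integral argument.

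\emph{Long-time existence.} Fix $\rho>0$. Theorem~\ref{cor:aprioriestimatesforperturbedflow} gives $\|u\|_{C^2}\le C$ independently of $T<T^*$ and of $\rho$. Lemma~\ref{prop: uniform parabolicity for perturbed flow} gives $\sigma_{k-1}(g)\ge C_0(\rho)>0$. Together these keep $\lambda(W)$ in a compact subset of $\Gamma_{k-1}^+$, provided we also check that $\sigma_j(g)$, $j<k-1$, stays away from $0$. I would get this from the Newton--Maclaurin inequality $\sigma_{j}\ge c\,\sigma_{k-1}^{j/(k-1)}$ on $\Gamma_{k-1}^+$, which holds given the $C^2$ bound. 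On this compact set the operator $W\mapsto F(W,u)$ is uniformly elliptic, with $F^{ij}\ge c(\rho)\,\delta_{ij}$ coming from the term $\nu T_{k-2}/\sigma_{k-1}^2$, and it is concave by Lemma~\ref{lem:concavity quotient}. The equation is therefore uniformly parabolic and concave in $\nabla^2u$. Krylov--Safonov and Evans--Krylov then give $C^{2,\alpha}$ bounds in space-time, and Schauder theory gives all higher bounds. A standard continuation argument shows $T^*=\infty$: the flow extends past any finite time and stays in $\mathcal C_{k-1}$, since the lower bound on $\sigma_{k-1}$ persists.

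\emph{Convergence.} By Lemma~\ref{lem:fucntional increase and r constant pertubed flow}, $J^k_\rho$ (or $\mathscr E_{n/2,\rho}$) is monotone, and it is bounded by the uniform $C^2$ bounds. Integrating \eqref{monotoneofJe} gives
\[
\int_0^\infty\!\!\int_M (u_t)^2\,\sigma_{k-1}(g)\,dv_g\,dt<\infty .
\]
With the higher-order bounds, which make $\partial_t\int (u_t)^2\sigma_{k-1}\,dv_g$ bounded, this forces $\|u_t\|_{L^2}\to 0$. Subsequential limits in $C^\infty$ therefore solve \eqref{limiting equationfinial}. Full convergence would follow from a \L ojasiewicz--Simon argument, or more simply I would settle for subsequential convergence, which suffices for the applications. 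The limit is in $\mathcal C_{k-1}$ by the lower bound on $\sigma_{k-1}$. The volume bound \eqref{volume} follows from the $\rho$-independent volume bound \eqref{volume is uniformly bounded} in the proof of Theorem~\ref{cor:aprioriestimatesforperturbedflow}.

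\emph{$g_\infty^\rho\in\mathcal C_k$ for small $\rho$.} This is the main step. The limit satisfies $\sigma_k(g_\infty^\rho)=\rho+s_\rho\,\sigma_{k-1}(g_\infty^\rho)$ with $\sigma_{k-1}>0$, so it suffices to show $s_\rho>0$; then $\sigma_k\ge\rho>0$.

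For $k<n/2$, since $J^k_\rho$ is nonincreasing and $\int\sigma_{k-1}\,dv_g$ is preserved, we have
\[
s_\rho\int\sigma_{k-1}(g_0)\,dv_{g_0}=\int\bigl(\sigma_k(g_\infty)-\rho\bigr)\,dv_{g_\infty},
\]
and $Y_{k,k-1}>0$ gives
\[
\int\sigma_k(g_\infty)\,dv_{g_\infty}\ge Y_{k,k-1}\Bigl(\int\sigma_{k-1}(g_0)\,dv_{g_0}\Bigr)^{\frac{n-2k}{n-2k+2}}>0 .
\]
Combined with $\rho\,\mathrm{Vol}(g_\infty)\le C\rho$, this yields $s_\rho>0$ for small $\rho$.

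For $k\ge n/2$, $J^k_\rho$ is nondecreasing. This gives
\[
\int\sigma_k(g_\infty)\,dv_{g_\infty}\ge\int\sigma_k(g_0)\,dv_{g_0}-C\rho\ge \tfrac12\int\sigma_k(g_0)\,dv_{g_0}>0 ,
\]
using the volume bound in both the $J^k_\rho$ correction and the $-\rho\,\mathrm{Vol}$ term. The case $k=n/2$ is handled the same way via the Gauss--Bonnet constancy of $\int\sigma_{n/2}\,dv_g$. Then $s_\rho>0$ again.

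The delicate points are the uniform ellipticity in the continuation step and, when $k-1=n/2$, the correct normalization. Here the preserved quantity is $\mathscr E$ rather than $\int\sigma_{k-1}\,dv_g$, so I would instead use directly that $\int\sigma_{k-1}\,dv_g$ is a positive conformal constant.
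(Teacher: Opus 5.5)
Your proposal is correct and follows the paper's proof. The paper likewise obtains long-time existence and convergence from the uniform $C^2$ bound and the $\rho$-dependent lower bound on $\sigma_{k-1}$, merely invoking the standard Guan--Wang argument, which your uniform-ellipticity, Evans--Krylov and $\int_0^\infty\!\int_M u_t^2\,\sigma_{k-1}(g)\,dv_g\,dt<\infty$ sketch fills in. It then proves $g^\rho_\infty\in\mathcal C_k$ by exactly your reduction to $s_\rho>0$: $Y_{k,k-1}>0$ with the preserved $\int_M\sigma_{k-1}\,dv_g$ for $k<\frac n2$, monotonicity of $J^k_\rho$ for $k>\frac n2$, constancy of $\int_M\sigma_{n/2}\,dv_g$ for $k=\frac n2$, and the $\rho$-independent volume bound throughout.
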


\begin{proof}
By Theorem~\ref{cor:aprioriestimatesforperturbedflow} and Lemma~\ref{prop: uniform parabolicity for perturbed flow}, with the standard argument as \cite{GWDuke}, the flow exists for all time and converges to a metric $g_{\infty}^{\rho}\in\mathcal C_{k-1}$ satisfying \eqref{limiting equationfinial} and \eqref{volume}. It remains to show the last statement.

It suffices to show
$s_{\rho}\ge 0$, since  \eqref{limiting equationfinial} implies that $\sigma_k(g^{\rho}_{\infty })>0$.

If $k<\frac n 2$, then by the definition of $Y_{k,k-1}(M,[g_0])$,
\begin{align*}
\int_M \sigma_k(g^{\rho}_{\infty })\,dv_{g^{\rho}_{\infty }}
&\ge Y_{k,k-1}(M,[g_0])\left(\int_M \sigma_{k-1}(g^{\rho}_{\infty })\,dv_{g^{\rho}_{\infty }}\right)^{\frac{n-2k}{n-2(k-1)}}\\
&=Y_{k,k-1}(M,[g_0])\left(\int_M \sigma_{k-1}(g_0)\,dv_{g_0}\right)^{\frac{n-2k}{n-2(k-1)}}.
\end{align*}
Therefore, for $\rho$ sufficiently small,
\[
 s_{\rho}=\frac{\int_M(\sigma_{k}(g^{\rho}_{\infty })-\rho)\,dv_{g^{\rho}_{\infty }}}{\int_M\sigma_{k-1}(g^{\rho}_{\infty })\,dv_{g^{\rho}_{\infty }}}>0,
\]
since $\operatorname{Vol}(g_\infty^\epsilon)$ is bounded.
Consequently, $g^{\rho}_{\infty }\in \mathcal C_{k}$ by \eqref{limiting equationfinial}.

If $k>\frac n 2$, then by the monotonicity in Lemma~\ref{lem:fucntional increase and r constant pertubed flow},
\[
\int_M \sigma_k(g^{\rho}_{\infty })\,dv_{g^{\rho}_{\infty }}+\frac{(2k-n)\rho}{n}\operatorname{Vol}(g^{\rho}_{\infty })
\ge \int_M \sigma_k(g_0)\,dv_{g_0}+\frac{(2k-n)\rho}{n}\operatorname{Vol}(g_0).
\]
Thus, for $\rho$ sufficiently small, if $\int_M \sigma_k(g_0)\,dv_{g_0}>0$ then $s_{\rho}>0$ by \eqref{volume}, and hence $g^{\rho}_{\infty }\in \mathcal C_{k}$.

Finally, if $k=\frac n2$, then $\int_M \sigma_{n/2}(g_0)\,dv_{g_0}$ is constant and again $s_{\rho}>0$ for $\rho$ sufficiently small.
This completes the proof.
\end{proof}

With the help of Theorem~\ref{thm:long-time existence for pertubed flow}, we can now establish Theorem~\ref{thm:minimizer in weak cone}.

\begin{proof}[Proof of Theorem~\ref{thm:minimizer in weak cone}] Assume $Y_{k,k-1}(M,[g_0])>0$.

First consider the case $k<\frac n 2$. 
Choose a sequence $g_i\in \mathcal C_{k-1}[g_0]$ such that
\[
\lim_{i\to\infty}\frac{\int_M \sigma_{k}(g_i)\,dv_{g_i}}{\left(\int_M \sigma_{k-1}(g_i)\,dv_{g_i}\right)^{\frac{n-2k}{n-2(k-1)}}}=Y_{k,k-1}(M,[g_0]).
\]
For each $i$, run the modified  flow \eqref{eq:pertubed flow} with initial metric $g_i$.
By Theorem~\ref{thm:long-time existence for pertubed flow}, the flow converges to a limit metric $g^{\rho}_{i,\infty}$.
By Lemma~\ref{lem:fucntional increase and r constant pertubed flow},
\eq{\label{eq_x}
J^k_\rho(g_i)\ge J^k_\rho(g^{\rho}_{i,\infty}),
\qquad J^k_\rho(g)=\int_M\sigma_{k}(g)\,dv_{g}+\frac{(2k-n)\rho}{n}\operatorname{Vol}(g).
}
Wlog we may assume $\int_M\s_{k-1} (g_i) dv_{g_i}=1.$
\eqref{eq_x} implies
\begin{align*}
\int_M \sigma_{k}(g_i)\,dv_{g_i}+\frac{(2k-n)\rho}{n}\operatorname{Vol}(g_i)
\ge 
{\int_M \sigma_{k}(g^{\rho}_{i,\infty})\,dv_{g^{\rho}_{i,\infty}}+\frac{(2k-n)\rho}{n}\operatorname{Vol}(g^{\rho}_{i,\infty})}
\end{align*}

For $\rho$ sufficiently small, Theorem~\ref{thm:long-time existence for pertubed flow} yields $g^{\rho}_{i,\infty}\in \mathcal C_{k}$ and $\rho\operatorname{Vol}(g^{\rho}_{i,\infty})\to0$ as $\rho\to0$.
Hence
\[
{\int_M \sigma_{k}(g_i)\,dv_{g_i}}
\ge \lim_{\rho\to0}{\int_M \sigma_{k}(g^{\rho}_{i,\infty})\,dv_{g^{\rho}_{i,\infty}}}
\ge \bar Y_{k,k-1}(M,[g_0]).
\]
Thus $Y_{k,k-1}(M,[g_0])\ge \bar Y_{k,k-1}(M,[g_0])$.
Since $\mathcal C_{k}\subset \mathcal C_{k-1}$, we have trivially $Y_{k,k-1}(M,[g_0])\le \bar Y_{k,k-1}(M,[g_0])$.
Therefore $Y_{k,k-1}(M,[g_0])=\bar Y_{k,k-1}(M,[g_0])$ for $k<\frac n2$.

\medskip

For $k>\frac n2$, since $\mathcal C_{k}\subset \mathcal C_{k-1}$, we have $Y_{k,k-1}(M,[g_0])\ge \bar Y_{k,k-1}(M,[g_0])$ by definition.
Take any $g_*\in \mathcal C_{k-1}$ with $\int_M \sigma_{k}(g_*)\,dv_{g_*}>0$ and run \eqref{eq:pertubed flow} from $g_*$.
By Theorem~\ref{thm:long-time existence for pertubed flow} and Lemma~\ref{lem:fucntional increase and r constant pertubed flow}, there exists $g^{\rho}_{*,\infty}\in\mathcal C_{k}$ for $\rho$ sufficiently small and $\rho\operatorname{Vol}(g^{\rho}_{*,\infty})\to0$. Then  a similar argument as above implies
$Y_{k,k-1}(M,[g_0])\le \bar Y_{k,k-1}(M,[g_0])$, and hence equality holds. The achievement of $\bar Y_{k,k-1}(M,[g_0])$ was proved in \cite{GWDuke}.

We leave the proof of the statement $\mathcal{C}_k[g_0]\not =\emptyset$ implies $Y_{k,k-1} (M,[g_0])>0$ in the next section.
\end{proof}


With Theorem~\ref{thm:minimizer in weak cone} established, it remains to complete the proof of Theorem~\ref{thm:generalized Sobolev inequality-high dim} by treating the remaining cases $k=\frac n2$ and $k=\frac n2+1$.

\begin{proof}[Proof of Theorem~\ref{thm:generalized Sobolev inequality-high dim}]
Let $g_0=g_{\mathbb S^n}\in\mathcal C_{k-1}$ and it  holds that $\int_M \sigma_{k}(g_0)\,dv_{g_0}>0$ for $k\ge \frac n 2$.

Given any $g\in \mathcal C_{k-1}[g_0]$, run the flow \eqref{eq:pertubed flow} with initial metric $g$.
By Theorem~\ref{thm:long-time existence for pertubed flow}, the flow converges to $g^{\rho}_{\infty}\in \mathcal C_k[g_0]$ with
$\int_M \sigma_{k-1}(g)\,dv_g=\int_M \sigma_{k-1}(g^{\rho}_{\infty})\,dv_{g^{\rho}_{\infty}}$
and $\rho\operatorname{Vol}(g^{\rho}_{\infty})\to 0$ as $\rho\to 0$.

\medskip
\noindent\emph{Case $k=\frac n 2$.}
By the monotonicity in Lemma~\ref{lem:fucntional increase and r constant pertubed flow} and the Sobolev inequality in \cite[Theorem~1(C)]{GWDuke} applied to $g^{\rho}_{\infty}$, for $\rho$ sufficiently small we have
\begin{align*}
\mathscr{E}_{n/2}(g)
&\ge \mathscr{E}_{n/2}(g^{\rho}_{\infty})-\frac{\rho}{n}\operatorname{Vol}(g^{\rho}_{\infty})\\
&\ge C_s\left(\log \int_M \sigma_{\frac n 2-1}(g^{\rho}_{\infty})\,dv_{g^{\rho}_{\infty}}-\log \int_M \sigma_{\frac n 2-1}(g_{0})\,dv_{g_{0}}\right)-\frac{\rho}{n}\operatorname{Vol}(g^{\rho}_{\infty})\\
&= C_s\left(\log \int_M \sigma_{\frac n 2-1}(g)\,dv_g-\log \int_M \sigma_{\frac n 2-1}(g_0)\,dv_{g_0}\right)-\frac{\rho}{n}\operatorname{Vol}(g^{\rho}_{\infty}),
\end{align*}
where
$$
C_s=\frac{1}{2}C_{MT}=\frac{1}{2}\int_M \sigma_{\frac n 2}(g_0)\,dv_{g_0}=\frac{1}{2}\frac{\omega_n}{2^{n/2}}\binom{n}{n/2}.
$$
Letting $\rho\to 0$ gives
\[
\mathscr{E}_{n/2}(g)\ge C_s\left(\log \int_M \sigma_{\frac n 2-1}(g)\,dv_g-\log \int_M \sigma_{\frac n 2-1}(g_{0})\,dv_{g_{0}}\right).
\]

\medskip
\noindent\emph{Case $k=\frac n 2+1$.}
Applying the Moser--Trudinger inequality in \cite[Theorem~1(C)]{GWDuke} to $g^{\rho}_{\infty}\in \mathcal C_{\frac n 2+1}$, for $\rho$ sufficiently small we obtain
\begin{align}
J^k_\rho(g)
&=\int_M\sigma_{k}(g)\,dv_{g}+\frac{(2k-n)\rho}{n}\operatorname{Vol}(g)
\nonumber\\
&\le\int_M\sigma_{k}(g^{\rho}_{\infty})\,dv_{g^{\rho}_{\infty}}+\frac{(2k-n)\rho}{n}\operatorname{Vol}(g^{\rho}_{\infty})
\nonumber\\
&\le e^{-\frac{2\mathscr{E}_{n/2}(g)}{C_{MT}}}\int_M \sigma_{\frac n 2+1}(g_0)\,dv_{g_0}+\frac{(2k-n)\rho}{n}\operatorname{Vol}(g^{\rho}_{\infty}).
\end{align}
Letting $\rho\to 0$ yields
\[
\int_M\sigma_{\frac n 2+1}(g)\,dv_{g}
\le e^{-\frac{2\mathscr{E}_{n/2}(g)}{C_{MT}}}\int_M \sigma_{\frac n 2+1}(g_0)\,dv_{g_0}.
\]

Finally, combining the cases $l=k-1$ and a simple induction shows that the general case for $k$ and $l\le k-2$ follows from the Sobolev inequality in \cite[Theorem~1]{GWDuke}. See also the next Remark.
\end{proof}

\begin{remark}
    \label{iteration}
    One can iterate the inequality for $l=k-1$ to $l=k-2$ as follows. For example, for $k<\frac{n}{2}$,  using
    \eq{
    \frac{\int_M \sigma_k(g)\, dv_g}{\left(\int_M \sigma_{k-2}(g)\, dv_g\right)^{\frac{n-2 k}{n-2(k-2)}}}
    = 
    \frac{\int_M \sigma_k(g)\, dv_g}{\left(\int_M \sigma_{k-1}(g)\, dv_g\right)^{\frac{n-2 k}{n-2(k-1)}}}
   \left\{\frac{\int_M \sigma_{k-1}(g)\, dv_g}{\left(\int_M \sigma_{k-2}(g)\, dv_g\right)^{\frac{n-2 (k-1)}{n-2(k-2)}}} 
\right\}^{\frac {n-2k}{n-2(k-1)} }
    }
    we have
    \eq{
\frac{\int_M \sigma_k(g)\, dv_g}{\left(\int_M \sigma_{k-2}(g)\,  dv_g\right)^{\frac{n-2 k}{n-2(k-2)}}}
  \ge Y_{k,k-1} (M)Y_{k-1, k-2}(M) ^{\frac {n-2k}{n-2(k-1)} }.
  }
  If $M= \S^n$, one can show that 
   \eq{Y_{k,k-1} (\S^n )Y_{k-1, k-2}(\S^n) ^{\frac {n-2k}{n-2(k-1)} }=Y_{k,k-2}(\S^n),
   }
   since all minimizers of $Y_{k,k-1}$, $Y_{k,k-2}$ and $Y_{k-1, k-2}$ are achieved by the same metric, the round metric. Then we obtain the sharp inequality for $l=k-2$.
   On a manifold other than the round sphere, this is not the case. We have only
\eq{Y_{k,k-1} (M)Y_{k-1, k-2}(M) ^{\frac {n-2k}{n-2(k-1)} } \le Y_{k,k-2}(M).
   } It could happen that minimizers of these three Yamabe constants cannot be achieved by the same metric.  Therefore, the iteration method does not apply to obtain the sharp inequality, and we need to consider flow \eqref{flow for k-l}.
   \end{remark}

%


\section{Positivity of \texorpdfstring{$Y_{k,k-1}(M,[g_0])$}{Yk,k-1(M,[g0])}}\label{Section: Appendix}

Let $k< \frac n2$ and let $(M,g_0)$ be a locally conformally flat manifold with $g_0\in \mathcal C_{k-1}$. In this section we study the sign of the conformal invariant $Y_{k,k-1}(M,[g_0])$. Recall that
\[
Y_{k, k-1}\left(M,\left[g_0\right]\right)
= \inf_{g\in \mathcal C_{k-1}[g_0]}\frac{\int_M \sigma_k(g)\, dv_g}{\left(\int_M \sigma_{k-1}(g)\, dv_g\right)^{\frac{n-2 k}{n-2(k-1)}}}.
\]

We begin by showing

\begin{proposition}\label{prop6.1}
    Let $k< \frac n2$ and let $(M,g_0)$ be a locally conformally flat manifold with $g_0\in \mathcal C_{k-1}$. Then
    \eq{
    Y_{k, k-1}\left(M,\left[g_0\right]\right) > -\infty,
    }
    in particular, $Y_{k,k-1}$ is well-defined.
\end{proposition}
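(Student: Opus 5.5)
The plan is to use the scale invariance of the quotient together with the monotonicity of $J^k_\epsilon$ along the perturbed flow \eqref{eq:pertubed flow}. This reduces the lower bound to a bound on the \emph{stationary} limits of the flow, which satisfy an elliptic equation whose a priori estimates do not depend on the initial metric.

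Since $k<\frac n2$, the exponent $\frac{n-2k}{n-2(k-1)}$ is positive. The quotient is invariant under constant rescalings $g\mapsto c^2 g$, because $\int\sigma_j(c^2g)\,dv_{c^2g}=c^{n-2j}\int\sigma_j(g)\,dv_g$. So it suffices to find $C=C(M,g_0)$ with
\[
\int_M\sigma_k(g)\,dv_g\ge -C\qquad\text{for all } g\in\mathcal C_{k-1}[g_0] \text{ with } \int_M\sigma_{k-1}(g)\,dv_g=\int_M\sigma_{k-1}(g_0)\,dv_{g_0}.
\]

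Fix such a $g$ and a small $\epsilon>0$, and run \eqref{eq:pertubed flow} starting from $g$. Theorem~\ref{thm:long-time existence for pertubed flow} gives that the flow exists for all time and converges to $g^\epsilon_\infty\in\mathcal C_{k-1}[g_0]$ solving \eqref{limiting equationfinial}. Lemma~\ref{lem:fucntional increase and r constant pertubed flow} gives $\int\sigma_{k-1}(g^\epsilon_\infty)\,dv=\int\sigma_{k-1}(g_0)\,dv_{g_0}$ and, since $2k-n<0$,
\[
\int_M\sigma_k(g)\,dv_g\ge \int_M\sigma_k(g^\epsilon_\infty)\,dv_{g^\epsilon_\infty}+\frac{(2k-n)\epsilon}{n}\bigl(\operatorname{Vol}(g^\epsilon_\infty)-\operatorname{Vol}(g)\bigr).
\]
Both volumes are bounded by a constant depending only on $(M,g_0)$. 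This follows from the Guan--Wang inequality $\int\sigma_{k-1}\,dv\ge C\operatorname{Vol}^{\frac{n-2(k-1)}{n}}$ in $\mathcal C_{k-1}$ (valid because $k-1<\frac n2$), exactly as in \eqref{volume is uniformly bounded}. Hence the $\epsilon$-terms are bounded by $C\epsilon$, and it remains to bound $\int\sigma_k(g^\epsilon_\infty)\,dv$ from below uniformly in $g$.

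The key step, and the main obstacle, is that the $C^2$ bound of Theorem~\ref{cor:aprioriestimatesforperturbedflow} along the flow may depend on the initial metric $g$: the maximum of the test function $G$ could sit at $t=0$. I would avoid this by applying the estimates directly to the limit $u_\infty$.

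\emph{Gradient bound.} The limit solves a conformally invariant elliptic equation on a locally conformally flat manifold. Ye's argument (Lemma~\ref{lem:gradient est}) should give $|\nabla u_\infty|\le C$, with $C$ independent of $g$, $\epsilon$ and $s_\epsilon$.

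\emph{Second derivatives.} Regard $u_\infty$ as a time-independent solution of \eqref{eq:pertubed flow}. Then $G_t\equiv0$, so the maximum-principle argument in the proof of Theorem~\ref{thm:C2 estimates} runs at a spatial maximum point without any initial-time issue. This yields $|\nabla^2u_\infty|\le C$ independent of $g$ and $\epsilon$.

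\emph{Zeroth order.} The normalization of $\int\sigma_{k-1}$ and the Case~1 argument of Theorem~\ref{cor:aprioriestimatesforperturbedflow}, together with the volume bound, give $|u_\infty|\le C$.

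With $\|u_\infty\|_{C^2}\le C$ we get
\[
\Bigl|\int_M\sigma_k(g^\epsilon_\infty)\,dv_{g^\epsilon_\infty}\Bigr|=\Bigl|\int_M e^{(2k-n)u_\infty}\sigma_k(W_\infty)\,dv_{g_0}\Bigr|\le C .
\]
Inserting this into the monotonicity inequality and letting $\epsilon\to0$ (or fixing $\epsilon=1$) gives $\int\sigma_k(g)\,dv_g\ge -C$. By scale invariance, $Y_{k,k-1}(M,[g_0])\ge -C\bigl(\int\sigma_{k-1}(g_0)\,dv_{g_0}\bigr)^{-\frac{n-2k}{n-2k+2}}>-\infty$.

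If the time-independence trick for the $C^2$ bound turns out to be delicate, the fallback is to use the elliptic second-derivative estimates for the quotient equation, where, as noted in the introduction, the bad term $u_iF_i$ does not appear.
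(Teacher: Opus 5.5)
Your route differs from the paper's and works in outline, but one step is false as stated. You claim that the stationary limits satisfy $|\nabla u_\infty|\le C$ with $C$ independent of the initial metric $g$. This fails when $(M,g_0)$ is conformally equivalent to the round sphere, which the statement allows. For every conformal diffeomorphism $\phi$ of $\mathbb{S}^n$, a suitable constant multiple of $\phi^*g_{\mathbb{S}^n}$ has the following properties:
\begin{itemize}
\item it lies in $\mathcal C_{k-1}$ and has the prescribed value of $\int\sigma_{k-1}\,dv$;
\item its $\sigma_j$ are constant, so $s_\epsilon=(\sigma_k-\epsilon)/\sigma_{k-1}$ and it is a stationary solution of \eqref{eq:pertubed flow}, i.e.\ its own limit;
\item as $\phi$ concentrates, $|\nabla u|$ is unbounded.
\end{itemize}
So no uniform $C^1$, $C^2$ or $C^0$ bound holds for the set of limits. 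For the same reason, Lemma~\ref{lem:gradient est} cannot give uniformity in $g$: it is a flow estimate whose constant depends (necessarily, on $\mathbb{S}^n$) on the initial datum. What you actually need is the elliptic Schoen-type moving-plane gradient bound, and that is uniform only on locally conformally flat manifolds not conformally equivalent to $\mathbb{S}^n$.

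The repair is the case split the paper itself uses in the proof of Theorem~\ref{Y_quo}.
\begin{itemize}
\item On $\mathbb{S}^n$: the Liouville/moving-sphere classification \cite{LLActa,ViacTams} shows $g^\epsilon_\infty$ is round up to a conformal diffeomorphism. Its Schouten eigenvalues are therefore equal and, lying in $\Gamma_{k-1}^+$, positive, so $\int\sigma_k(g^\epsilon_\infty)\,dv>0$ directly.
\item Off the sphere: combine the moving-plane gradient bound, your stationary use of Theorem~\ref{thm:C2 estimates}, and the Case~1 $C^0$ argument to get $\bigl|\int\sigma_k(g^\epsilon_\infty)\,dv\bigr|\le C$. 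Your monotonicity inequality then concludes.
\end{itemize}
The stationary use of Theorem~\ref{thm:C2 estimates} is correct: with $G_t\equiv 0$ the maximum can be taken at some $t_0>0$, which removes the initial-time dependence you rightly flagged. None of these ingredients uses the proposition, so the argument is not circular.

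The paper's proof uses no flow at all. It establishes the Han-type identity of Proposition~\ref{thm:intsigmak}. This writes $k\sigma_k(g)$ as $(n-2k)\sum_{a=1}^{k}2^{-a}\sigma_{k-a}(g)|\nabla w|_g^{2a}$, plus terms involving $S_{g_0}$, plus divergences; the Newton tensors are divergence free by local conformal flatness. For $g\in\mathcal C_{k-1}$ and $k<\frac n2$, the first sum is nonnegative. After Cauchy--Schwarz and an iteration, it absorbs the $S_{g_0}$-terms up to an error $C\int_M e^{2kw}\,dv_g\le C\operatorname{Vol}(g)^{\frac{n-2k}{n}}$. This gives Corollary~\ref{lowerboundofint}, $\int_M\sigma_k(g)\,dv_g\ge -C\operatorname{Vol}(g)^{\frac{n-2k}{n}}$. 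Factoring the quotient as $\frac{\int\sigma_k\,dv}{\operatorname{Vol}^{(n-2k)/n}}$ times $\bigl(\int\sigma_{k-1}\,dv/\operatorname{Vol}^{(n-2k+2)/n}\bigr)^{-\frac{n-2k}{n-2k+2}}$ and applying the Guan--Wang inequality for $\sigma_{k-1}$ in $\mathcal C_{k-1}$ finishes the proof.

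That argument is elementary and treats $\mathbb{S}^n$ and all other locally conformally flat manifolds alike. It also gives a quantitative inequality for each individual metric. Your argument avoids the integral identity but imports much heavier input: global existence and convergence of the perturbed flow, compactness of the stationary solutions via moving planes, and a Liouville theorem on the sphere. What it gains is that the lower bound reduces to the same a priori theory that drives the rest of the paper.
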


This finiteness statement is not obvious; compare the examples in the appendix below. We defer the proof to the end of this section.

    By Proposition~\ref{prop6.1} there are three possibilities, as in the classical Yamabe problem: (i) $Y_{k,k-1}>0$, (ii) $Y_{k,k-1}=0$, and (iii) $Y_{k,k-1}<0$. It is natural to ask whether the infimum in the definition of $Y_{k,k-1}$ is always achieved. The answer is yes, at least in $\overline{\mathcal{C}}_{k-1}$ in a viscosity sense. 


\begin{thm}\label{Y_quo}
On a locally conformally flat manifold $(M,g_0)$ with $g_0\in \mathcal C_{k-1}[g_0]$ for $k<\frac n 2$, exactly one of the following three cases occurs:
\begin{itemize}
    \item[(i)] If $Y_{k,k-1}(M,[g_0])>0$, then there exists a smooth metric $g\in\mathcal C_k[g_0]$ such that $\frac{\s_k(g)}{\s_{k-1}(g)}=1$.
    \item[(ii)] {If $Y_{k,k-1}(M,[g_0])=0$, then there exists a $C^{1,1}$ metric $g\in \overline{\mathcal C}_{k-1}[g_0]$ such that $\s_k(g)=0$.
    \item[(iii)] If $Y_{k,k-1}(M,[g_0])<0$, then there exists a $C^{1,1}$ metric $g\in\overline{\mathcal{C}}_{k-1}[g_0]$ such that ${\s_k(g)}=-{\s_{k-1}(g)}$}.
\end{itemize}
\end{thm}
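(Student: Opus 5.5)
Each of the three cases comes from running the perturbed flow \eqref{eq:pertubed flow} from a near-minimizing metric and letting $\rho\to0$ in the limiting equation \eqref{limiting equationfinial}. The three cases are mutually exclusive because the sign of $Y_{k,k-1}(M,[g_0])$ is fixed, and Proposition~\ref{prop6.1} guarantees $Y_{k,k-1}>-\infty$, so the trichotomy is exhaustive. Case (i) is essentially already done. By Theorem~\ref{thm:minimizer in weak cone}, $Y_{k,k-1}=\bar Y_{k,k-1}>0$ is attained by some $g\in\mathcal C_k[g_0]$. Its Euler--Lagrange equation, for the quotient with the $\sigma_{k-1}$-constraint (Viaclovsky's variational formula), is $\sigma_k(g)/\sigma_{k-1}(g)=\text{const}>0$. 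Rescaling $g$ by a constant makes the constant $1$. Smoothness follows from the uniform ellipticity of the quotient equation in $\mathcal C_k$ together with Evans--Krylov and Schauder estimates.

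For (ii) and (iii), normalize $\int_M\sigma_{k-1}(g)\,dv_g=1$. Take a minimizing sequence $g_i\in\mathcal C_{k-1}[g_0]$ and, for fixed $i$, run \eqref{eq:pertubed flow} from $g_i$. By Theorem~\ref{thm:long-time existence for pertubed flow} this gives limits $g^{\rho}_{i,\infty}=e^{-2u_{i,\rho}}g_0\in\mathcal C_{k-1}$ satisfying
\[
\sigma_k(g^\rho_{i,\infty})-\rho=s_{i,\rho}\,\sigma_{k-1}(g^\rho_{i,\infty}).
\]
Theorem~\ref{cor:aprioriestimatesforperturbedflow} and Lemma~\ref{lem:gradient est} give $\|u_{i,\rho}\|_{C^2}\le C$, uniformly in $\rho$ and in $i$. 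For the uniformity in $i$ one has to check that the constants depend only on the preserved quantity $\int\sigma_{k-1}=1$ and on $(M,g_0)$; this holds since the $C^0$ argument used only that quantity. By the monotonicity of $J^k_\rho$ (Lemma~\ref{lem:fucntional increase and r constant pertubed flow}) and the volume bound \eqref{volume},
\[
s_{i,\rho}=\int_M\sigma_k(g^\rho_{i,\infty})\,dv-\rho\operatorname{Vol}\le \int_M\sigma_k(g_i)\,dv_{g_i}+O(\rho).
\]
The definition of $Y_{k,k-1}$ gives $s_{i,\rho}\ge Y_{k,k-1}-C\rho$. Choosing $\rho=\rho_i\to0$ therefore gives $s_{i,\rho_i}\to Y_{k,k-1}$.

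By Arzelà--Ascoli, along a subsequence $u_{i,\rho_i}\to u_\infty$ in $C^{1,\alpha}$, with $u_\infty\in C^{1,1}$ and $W(u_\infty)\in\overline{\Gamma}_{k-1}^+$ almost everywhere and in the viscosity sense. Since the equations are concave and elliptic on $\Gamma_{k-1}^+$ (Lemma~\ref{lem:concavity quotient}), the stability of viscosity solutions under uniform $C^{1,1}$ bounds lets us pass to the limit in $\sigma_k-\rho_i=s_i\sigma_{k-1}$. The limit satisfies
\[
\sigma_k(g)=Y_{k,k-1}\,\sigma_{k-1}(g),\qquad g=e^{-2u_\infty}g_0.
\]
If $Y_{k,k-1}=0$ this is (ii). If $Y_{k,k-1}<0$, rescale $g$ by a constant, $\tilde g=\lambda g$: then $\sigma_k$ scales by $\lambda^{-k}$ and $\sigma_{k-1}$ by $\lambda^{-(k-1)}$, so a suitable $\lambda$ makes the ratio exactly $-1$, giving (iii).

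The main obstacle is non-degeneracy of the limit, and I would treat it in three steps.
\begin{enumerate}
\item[(a)] The $C^0$ bound must survive $\rho\to0$ and $i\to\infty$. The lower bound on $u$ comes from the Guan--Wang inequality for $\sigma_{k-1}$ (case $k-1<n/2$ applies, as $k<n/2$). The upper bound comes from Case 1 of the proof of Theorem~\ref{cor:aprioriestimatesforperturbedflow}, which uses only $\int\sigma_{k-1}=1$ and the $C^2$ bound. Both are independent of $\epsilon$.
\item[(b)] It follows that $\int\sigma_{k-1}(g)\,dv_g=1$ persists in the limit, since $\sigma_{k-1}$ of the Hessian converges weakly (divergence structure). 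So $\sigma_{k-1}(g)\not\equiv0$, and the limit metric is not trivial.
\item[(c)] The pointwise lower bound of Lemma~\ref{prop: uniform parabolicity for perturbed flow} degenerates as $\rho\to0$. This is exactly why the conclusion is stated only in $\overline{\mathcal C}_{k-1}$ and only with $C^{1,1}$ regularity; I would not attempt to upgrade it.
\end{enumerate}
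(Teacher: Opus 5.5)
Your architecture (minimizing sequence $g_i$, perturbed flow, diagonal choice $\epsilon=\epsilon_i\to0$, passage to a $C^{1,1}$ viscosity limit, rescaling) matches the paper's, and you handle (i) as the paper does. The gap is the claim that $\|u_{i,\epsilon}\|_{C^2}\le C$ uniformly in $i$.

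The flow estimates you cite only control a solution in terms of its initial metric. The maximum principle for $G=\lambda_{\max}(W)+(m+\frac12)|\nabla u|^2$ in Theorem~\ref{thm:C2 estimates} treats a maximum at $t_0>0$. If the maximum sits at $t=0$, you only get $G\le\max_M G(\cdot,0)$, which is a bound in terms of $\|u_i\|_{C^2}$. Likewise, Lemma~\ref{lem:gradient est} is only asserted to be uniform in $\epsilon$, $s_\epsilon$ and $T$; Ye's reflection argument is a parabolic comparison that has to be started at $t=0$.

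A minimizing sequence carries no $C^1$ or $C^2$ control; that is exactly the compactness problem. Your check of the $C^0$ step does not repair this. Case~1 of Theorem~\ref{cor:aprioriestimatesforperturbedflow} itself uses the $C^2$ bound, and your lower bound $u\ge -C$ in (a) uses the gradient bound.

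What you need is a bound for the stationary equation $\sigma_k(g)-\epsilon=s\,\sigma_{k-1}(g)$, satisfied by the limits $g^\epsilon_{i,\infty}$, that does not depend on the particular solution. The paper obtains this through a dichotomy your proposal lacks.
\begin{itemize}
\item If $M$ is conformal to $\mathbb{S}^n$, the moving-sphere classification \cite{LLActa,ViacTams} forces $g^\epsilon_{i,\infty}$ to be round up to a conformal diffeomorphism. With $\int\sigma_{k-1}=1$, $\int\sigma_k(g^\epsilon_{i,\infty})$ is then a fixed positive constant. This contradicts $\int\sigma_k(g^\epsilon_{i,\infty})\le\int\sigma_k(g_i)+C\epsilon\to0$.
\item If $M$ is not conformal to $\mathbb{S}^n$, moving planes \cite{LLActa,GLWMathZ} give $|\nabla u^\epsilon_{i,\infty}|\le C$ independent of $i$ and $\epsilon$. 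The volume bound then gives $u\ge -C$, and local estimates \cite{GWIMRN,SChenIMRN} give $|\nabla^2 u|\le C$.
\end{itemize}
Only after this can you pass to the $C^{1,1}$ limit. Alternatively, rerunning the maximum-principle computation of Theorem~\ref{thm:C2 estimates} on the stationary equation, where there is no $t=0$ boundary, would give the Hessian bound. Either way the gradient bound must come from moving planes. The sphere case must be treated separately: the conformal group rules out any solution-independent gradient bound on $\mathbb{S}^n$.

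Case (iii) can be rescued without a sequence. Run the flow from a single $g_*$ with $\int\sigma_{k-1}(g_*)=1$ and $\int\sigma_k(g_*)<0$. The flow estimates are then uniform in $\epsilon$, and the monotonicity of $J^k_\epsilon$ gives $s_\epsilon\le\int\sigma_k(g_*)<0$ for $k<\frac n2$. Passing to the limit and rescaling yields $\sigma_k=-\sigma_{k-1}$.

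For (ii), however, a fixed initial metric only yields $0\le s_0\le\int\sigma_k(g_*)$. So a sequence of initial data, and with it the solution-independent compactness described above, is unavoidable there.
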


\begin{proof}
When $k=2$, the result was proved in \cite{GLWJDG} without the locally conformally flat assumption. The main ideas of \cite{GLWJDG} apply here, so we only sketch the argument.

The proof in Section~\ref{Section:Proof of Main theorems} yields that for each $\rho>0$, there exists a solution $g_\rho$ with $\int \sigma_{k-1}(g_{\epsilon})=1$ to 
\eq{\label{limiting equation}
\frac{\s_k(g_\rho)-\rho}{\s_{k-1}(g_\rho)}=s_\rho=\frac{\int \sigma_{k}(g_{\epsilon})-\epsilon Vol(g_{\epsilon})}{\int \sigma_{k-1}(g_{\epsilon})}.}
Letting $\rho\to 0$, we obtain three cases: (i) $s_0>0$, (ii) $s_0=0$, and (iii) $s_0<0$.
 Case (i) was considered in the previous section.
Due to the estimates given in the previous section,  for the remaining cases one shows that $g_\rho$ converges to a {$C^{1,1}$} metric $g$ of
\eq{\label{eq_q} \s_k(g)=s_0\s_{k-1}(g).}
It is straightforward to see that these three cases correspond to the three possible signs of $Y_{k,k-1}$. Note that for $s_0<0$, the solution is unique.
For the reader's convenience, we provide a proof for the case $Y_{k,k-1}=0$. 

Since $Y_{k,k-1}(M)=0$, we can take a sequence of metrics $g_{i}\in\mathcal{C}_{k-1}$
such that $\int\sigma_{k}(g_{i})dv_{g_{i}}\rightarrow0$ and $\int\sigma_{k-1}(g_{i})=1$. 
Given an initial date $g_{i}$, consider the flow
 \begin{equation}
 u_t=-\frac{1}{2}g^{-1}\frac{\partial g}{\partial t}
 =\frac{\sigma_{k}(g)-\frac 1j}{\sigma_{k-1}(g)}-s_{j}
\quad\text{on }M,
\label{eq:pertubed flow-1/j}
\end{equation}
 where
\[
 s_{j}=\frac{\int_M (\sigma_{k}(g)-\frac 1j)\,dv_{g}}{\int_M \sigma_{k-1}(g)\,dv_{g}}.
\]

By Theorem \ref{thm:long-time existence for pertubed flow} and Lemma \ref{lem:fucntional increase and r constant pertubed flow},
there exists a limiting metric $g_{i,\infty}^{j}\in\mathcal{C}_{k-1}[g_{i}]=\mathcal{C}_{k-1}[g_{0}]$
such that
\begin{equation}
\frac{\sigma_{k}(g_{i,\infty}^{j})-\frac{1}{j}}{\sigma_{k-1}(g_{i,\infty}^{j})}=\frac{\int\sigma_{k}(g_{i,\infty}^{j})dv_{g_{i,\infty}^{j}}-\frac{1}{j}Vol(g_{i,\infty}^{j})}{\int\sigma_{k-1}(g_{i,\infty}^{j})dv_{g_{i,\infty}^{j}}}\label{eq:limiting equation on M}
\end{equation}
with $\int\sigma_{k-1}(g_{i,\infty}^{j})dv_{g_{i,\infty}^{j}}=1$.
Since $\int\sigma_{k-1}(g_{i,\infty}^{j})dv_{g_{i,\infty}^{j}}=1$ and
$\int\sigma_{k-1}(g_{i})=1$, by the Sobolev inequalities in \cite{GWDuke}, we
have 
\begin{equation}
Vol(g_{i})\le C,\quad Vol(g_{i,\infty}^{j})\le C.\label{eq:upper bound of volume}
\end{equation}
 By the monotonicity of the functional, see Lemma \ref{lem:fucntional increase and r constant pertubed flow},
\[
\int\sigma_{k}(g_{i,\infty}^{j})dv_{g_{i,\infty}^{j}}+\frac{2k-n}{n}\frac{1}{j}Vol(g_{i,\infty}^{j})=J(g_{i,\infty}^{j})\le J(g_{i})=\int\sigma_{k}(g_{i})dv_{g_{i}}+\frac{2k-n}{n}\frac{1}{j}Vol(g_{i}).
\]
Hence,
\begin{equation}
\int\sigma_{k}(g_{i,\infty}^{j})dv_{g_{i,\infty}^{j}}\le\int\sigma_{k}(g_{i})dv_{g_{i}}+\frac{2k-n}{n}\frac{1}{j}Vol(g_{i})-\frac{2k-n}{n}\frac{1}{j}Vol(g_{i,\infty}^{j}),\label{eq:smallness of intgeral of sigmak}
\end{equation}
and
\begin{align*}
\frac{\sigma_{k}(g_{i,\infty}^{j})-\frac{1}{j}}{\sigma_{k-1}(g_{i,\infty}^{j})} & =\int\sigma_{k}(g_{i,\infty}^{j})dv_{g_{i,\infty}^{j}}-\frac{1}{j}Vol(g_{i,\infty}^{j})\\
 & \le\int\sigma_{k}(g_{i})dv_{g_{i}}+\frac{2k-n}{n}\frac{1}{j}Vol(g_{i})-\frac{2k-n}{n}\frac{1}{j}Vol(g_{i,\infty}^{j})-\frac{1}{j}Vol(g_{i,\infty}^{j})\\
 & \le\int\sigma_{k}(g_{i})dv_{g_{i}}+\frac{2k-n}{n}\frac{1}{j}Vol(g_{i})-\frac{2k}{n}\frac{1}{j}Vol(g_{i,\infty}^{j})\\
 & <\int\sigma_{k}(g_{i})dv_{g_{i}}\quad\text{for }k<\frac{n}{2}.
\end{align*}

If $M$ is conformal to the sphere, then by the method of moving sphere, we know that $g_{i,\infty}^{j}$ can only be a standard sphere
metric up to conformal diffeomorphism. See \cite{LLActa,ViacTams} for example.
From $\int\sigma_{k-1}(g_{i,\infty}^{j})dv_{g_{i,\infty}^{j}}=1$,
we know that 
\[
\int\sigma_{k}(g_{i,\infty}^{j})dv_{g_{i,\infty}^{j}}=\binom{n}{k}2^{-k}\omega_{n}\left(\binom{n}{k-1}2^{-(k-1)}\omega_{n}\right)^{-\frac{n-2k}{n-2k+2}},
\]
which contradicts to (\ref{eq:smallness of intgeral of sigmak}) when 
$i,j$ are sufficiently large. So $M$ is not conformal to the sphere.

If the locally conformally flat manifold $M$ is not conformal to
sphere, then by the standard argument of moving planes, for example,
see \cite{LLActa, GLWMathZ}, we know that for $g_{i,\infty}^{j}=e^{-2u_{i,\infty}^{j}}g_{0}$,
it holds that 
\begin{equation}\label{gradient 1}
    |\nabla_{g_{0}}u_{i,\infty}^{j}|\le C,
\end{equation}
where $C$ is independent of $i,j$.
By (\ref{eq:upper bound of volume}), we get that 
\begin{equation}\label{gradient 2}
    u_{i,\infty}^{j}\ge-C.
\end{equation}
Then, by a priori estimates \cite{GWIMRN,SChenIMRN} and \eqref{gradient 1} and \eqref{gradient 2} we know that
\[
|\nabla_{g_{0}}^{2}u_{i,\infty}^{j}|\le C
\]
where $C$ is independent of $i,j$. 
Thus, there exists a subsequence of  $g_{i,\infty}^{j}$ converging
to $g_{\infty}$ in $C^{1,\alpha}$ for any $0<\alpha<1$ such that
$\sigma_{k}(g_{\infty})=0$ in the viscosity sense since $Y_{k,k-1}(M, [g])=0.$

\end{proof}

The ``if'' direction in the last statement of Theorem~\ref{thm:minimizer in weak cone} follows from Theorem~\ref{Y_quo}. Indeed, if $Y_{k,k-1}(M,[g_0])\le 0$, then Theorem~\ref{Y_quo} provides a metric $g\in \overline{\mathcal C}_{k-1}[g_0]$ with either $\sigma_k(g)=0$ or $\frac{\s_k(g)}{\s_{k-1}(g)}=-1$. This contradicts the non-emptiness of $\mathcal C_k[g_0]$ by the maximum principle; see \cite{GLWJDG}.


We now begin to show Proposition \ref{prop6.1}.
Inspired by Proposition 1.1 in Han \cite{Han}, we provide the following formula,
which will be used for the lower bound of $\mathcal{F}_{k}$ for $k<\frac{n}{2}$ and is of interest itself.
\begin{prop}\label{thm:intsigmak}
Let $(M^{n},g_{0})$ be locally conformally flat and set $g=e^{-2w}g_{0}$. Then
\begin{align}
k\sigma_{k}(g)
&= (n-2k)\sum_{a=1}^{k}\frac{\sigma_{k-a}(g)}{2^{a}}\,|\nabla w|_{g}^{2a}\label{eq:intsigmak}
 +\sum_{a=1}^{k}\frac{1}{2^{a-1}}\,(T_{k-a}(S_{g}))_{j}^{i}(S_{g_{0}})_{i}^{j}\,|\nabla w|_{g}^{2(a-1)}\nonumber \\
&\quad +\sum_{a=2}^{k}\frac{a-1}{2^{a-2}}\,(T_{k-a}(S_{g}))_{j}^{i}(S_{g_{0}})_{l}^{j}w^{l}w_{i}\,|\nabla_{g}w|^{2(a-2)}\nonumber\\
&\quad +\sum_{a=0}^{k-1}\nabla^j\Bigl(\frac{1}{2^a}(T_{k-a-1}(S_g))^i_jw_i\,|\nabla_g w|^{2a}\Bigr).
\end{align}
Here $\nabla$ denotes the Levi--Civita connection of $g$, $(T_{k-a}(S_g))^i_j=\frac{\partial \sigma_{k-a+1}}{\partial S_i^j}(g^{-1}S_g)$, and $(S_{g_{0}})_{p}^{j}=g^{jl}(S_{g_{0}})_{pl}$.
In particular, when $k=2$, \eqref{eq:intsigmak} holds on any Riemannian manifold.
\end{prop}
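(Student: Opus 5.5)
The plan is to derive \eqref{eq:intsigmak} from a pointwise identity that is iterated in $a$. It relies on two facts.

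\emph{Transformation law.} We write everything with respect to $g$, using $g_0=e^{2w}g$. The transformation law \eqref{ew_W}, applied with the roles of $g$ and $g_0$ exchanged (i.e.\ with $-w$ in place of $u$), gives
\[
S_{g_0}=-\nabla^2 w+dw\otimes dw-\tfrac{|\nabla w|_g^2}{2}\,g+S_g ,
\qquad\text{so}\qquad
(S_g)_{ij}=(S_{g_0})_{ij}+w_{ij}-w_iw_j+\tfrac{|\nabla w|^2}{2}g_{ij},
\]
where covariant derivatives are those of $g$.

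\emph{Divergence-free Newton tensors.} On a locally conformally flat manifold the Schouten tensor is Codazzi: its antisymmetrised derivative is the Cotton tensor, which vanishes. Hence each Newton tensor $T_m(S_g)$ is divergence free. For $m=1$ we have $T_1=\sigma_1 g-S_g$, and this is divergence free on any manifold by the contracted Bianchi identity. This is why the case $k=2$ needs no flatness assumption.

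\emph{First step.} Start from $k\sigma_k=\mathrm{tr}\bigl(T_{k-1}S_g\bigr)$, which is \eqref{sigmaproperty1}, and substitute the formula for $S_g$. This gives four terms.
\begin{itemize}
\item The $S_{g_0}$ term is the $a=1$ summand of the second sum.
\item The Hessian term is $T_{k-1}^{ij}w_{ij}=\nabla^j\bigl(T_{k-1}{}^i_j w_i\bigr)$ by the divergence-free property. This is the $a=0$ divergence term.
\item The trace term is $\tfrac{n-k+1}{2}\sigma_{k-1}|\nabla w|^2$, using \eqref{sigmaproperty2}.
\item The remaining term is $-T_{k-1}(\nabla w,\nabla w)$.
\end{itemize}
For the last term, use the Newton recursion $T_{k-1}=\sigma_{k-1}g-T_{k-2}S_g$. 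This gives
\[
-T_{k-1}(\nabla w,\nabla w)=-\sigma_{k-1}|\nabla w|^2+T_{k-2}^{ij}(S_g)_{jl}w_iw^l .
\]
Now substitute the formula for $S_g$ once more into $T_{k-2}^{ij}(S_g)_{jl}w_iw^l$.
\begin{itemize}
\item The $S_{g_0}$ part yields the $a=2$ term of the third sum.
\item The Hessian part is $T_{k-2}^{ij}w_{jl}w^lw_i=\tfrac12 T_{k-2}^{ij}w_i\nabla_j|\nabla w|^2$. Integrating by parts, this equals $\tfrac12\nabla^j\bigl(T_{k-2}{}^i_j w_i|\nabla w|^2\bigr)-\tfrac12|\nabla w|^2T_{k-2}^{ij}w_{ij}$.
\item The quadratic and trace parts produce $|\nabla w|^2T_{k-2}(\nabla w,\nabla w)$-type terms.
\end{itemize}
The leftover $-\tfrac12|\nabla w|^2T_{k-2}^{ij}w_{ij}$ is rewritten by expressing $w_{ij}$ through $S_g$ once more. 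Its $S_g$-part reproduces $(k-1)\sigma_{k-1}$, which combines with the $\sigma_{k-1}|\nabla w|^2$ terms already collected. Its $S_{g_0}$-part gives the $a=2$ term of the second sum.

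\emph{Induction.} The structure is now the same with $(k-1,|\nabla w|^2)$ replacing $(k,1)$, with an extra factor $\tfrac12|\nabla w|^2$ in each new term. I would therefore formulate an inductive claim: for $1\le a\le k$, $k\sigma_k$ equals the partial sums of \eqref{eq:intsigmak} up to index $a$, plus a remainder of the form
\[
c_a\,|\nabla w|^{2(a-1)}\,T_{k-a}(\nabla w,\nabla w)+d_a\,|\nabla w|^{2a}\sigma_{k-a}.
\]
The claim is proved by repeating the step above: apply the Newton recursion, substitute $S_g$, integrate the Hessian term by parts, and re-substitute. The process terminates at $T_{-1}=0$, i.e.\ at $a=k$.

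\emph{Main obstacle.} The real work is the coefficient bookkeeping. One must check that the $\sigma_{k-a}|\nabla w|^{2a}$ contributions from the trace, the recursion and the re-substitution combine to exactly $(n-2k)/2^a$. This uses \eqref{sigmaproperty1}--\eqref{sigmaproperty2} for $T_{k-a}$: the trace of $T_{k-a}$ gives a factor $(n-k+a)$ and the re-substitution gives a factor $-(k-a)$. One must also check that the mixed $S_{g_0}(\nabla w,\cdot)$ terms carry the coefficients $(a-1)/2^{a-2}$. I would first verify the whole scheme for $k=2$ by direct expansion, and fix the normalisation of the induction hypothesis from that case.
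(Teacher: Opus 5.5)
Your proposal is correct and follows essentially the paper's route. The paper packages your repeated step (Newton recursion, substitution of \eqref{conformal change}, integration by parts of the Hessian term using $\nabla_i (T_{k-a-1})^i_j=0$, and re-substitution of $w_{ij}$) into the single identity \eqref{eq:induction lemma-1} and iterates it. The left-hand coefficient $\frac{a}{2^{a-1}}$ of that identity is exactly your $-c_a$, and your $d_a$ works out to $\frac{n-k+a}{2^a}$, so the bookkeeping $\frac{n-k+a}{2^a}-\frac{a}{2^{a-1}}-\frac{k-a}{2^a}=\frac{n-2k}{2^a}$ closes as you anticipate. Your Codazzi and contracted-Bianchi justification of the divergence-free property, including why $k=2$ needs no flatness, supplies what the paper uses without comment.
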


\begin{cor}\label{lowerboundofint}
Let $(M^{n},g_0)$ be locally conformally flat and let $g=e^{-2w}g_0\in \mathcal C_{k-1}$. If $k<\frac n2$, then
\[
\int_M \sigma_k(g)\,dv_g\ge -C\,\operatorname{Vol}(g)^{\frac{n-2k}{n}},
\]
where $C>0$ depends only on $n$, $k$, and $g_0$.
\end{cor}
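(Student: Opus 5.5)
Integrate the identity \eqref{eq:intsigmak} of Proposition~\ref{thm:intsigmak} over $M$ with respect to $dv_g$. The last sum is a divergence and integrates to zero. This leaves
\[
k\int_M\sigma_k(g)\,dv_g=(n-2k)\sum_{a=1}^k 2^{-a}\int_M\sigma_{k-a}(g)|\nabla w|_g^{2a}\,dv_g+\int_M \mathcal{R}\,dv_g,
\]
where $\mathcal R$ collects the terms containing $S_{g_0}$. Since $g\in\mathcal C_{k-1}$, every $\sigma_{k-a}(g)$ with $a\ge1$ is positive. Because $n-2k>0$, the first sum is a nonnegative good term $\mathcal G$; I retain it, in particular the $a=k$ term $2^{-k}\int|\nabla w|_g^{2k}dv_g$ and the $a=1$ term $\frac12\int\sigma_{k-1}(g)|\nabla w|_g^2dv_g$.

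Next I bound the error terms. The tensors $T_{k-a}(S_g)$ with $a\ge1$ are the derivatives of $\sigma_{k-a+1}$, and $k-a+1\le k$. For $a\ge2$ we have $\lambda(g^{-1}S_g)\in\Gamma_{k-1}^+\subset\Gamma_{k-a+1}^+$, so $T_{k-a}$ is positive definite. Its trace is $(n-k+a)\sigma_{k-a}(g)$, hence $|T_{k-a}|\le C\sigma_{k-a}(g)$.

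The case $a=1$, i.e.\ $T_{k-1}$ paired with $S_{g_0}$, needs separate treatment, because $\lambda\in\Gamma_{k-1}^+$ does not give $T_{k-1}\ge0$. Here I use $(S_{g_0})^j_i=e^{2w}(S_{g_0})_{ij}$ in $g$-terms, so $|S_{g_0}|_g\le Ce^{2w}$, where $e^{2w}=|dx|_g^2$-scaling. I then expand $T_{k-1}=\sigma_{k-1}I-T_{k-2}S_g$ and bound each piece. The piece $T_{k-2}S_g$ is handled through a positive-definite $T_{k-2}$, Cauchy--Schwarz, and the trace identities of Lemma~\ref{lem:algebra}. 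This gives something like $|\langle T_{k-1},S_{g_0}\rangle|\le Ce^{2w}\bigl(\sigma_{k-1}+\sigma_{k-2}\,\sigma_1\bigr)$, although this is exactly where I am least sure of a clean bound.

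Each remaining error term has the form $\sigma_{k-a}(g)e^{2w}|\nabla w|_g^{2(a-1)}$, or $\sigma_{k-a}(g)e^{2w}|\nabla w|_g^{2(a-1)}$ coming from the $w^lw_i$ terms, which carry the same homogeneity. I treat these with weighted Young inequalities. Writing $e^{2w}=\mu$, the idea is to split $\sigma_{k-a}\mu|\nabla w|^{2a-2}\le\delta\,\sigma_{k-a}|\nabla w|^{2a}+C_\delta\,\sigma_{k-a}\mu^{a}\cdot(\ldots)$. Iterating down the chain, I aim for a final bound of the form $C\int_M e^{2kw}\,dv_g=C\int_M e^{(2k-n)w}dv_{g_0}$.

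The cleaner route is to do the Young inequalities pointwise on products $\sigma_{j}(g)\,\mu^{k-j}$, together with the Maclaurin-type estimate $\sigma_j\le C\sigma_{j-1}\cdot(\ldots)$. Because Maclaurin goes the wrong way, I would instead absorb each error into the good term with the same $\sigma$-index, leaving at the bottom index only $\int e^{2kw}dv_g=\int e^{(2k-n)w}dv_{g_0}$. Finally, Hölder's inequality gives $\int e^{(2k-n)w}dv_{g_0}\le \bigl(\int e^{-nw}dv_{g_0}\bigr)^{\frac{n-2k}{n}}\mathrm{Vol}(g_0)^{\frac{2k}{n}}$, which is $C\,\mathrm{Vol}(g)^{\frac{n-2k}n}$. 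This yields the claim.

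The main obstacle is the term with $a=1$, which involves the non-positive tensor $T_{k-1}$ and a zeroth-order factor. It must be absorbed using only $\sigma_{k-1}>0$, via the decomposition of $T_{k-1}$ above and the good $a=1$ term. The absorption must also be arranged so that no uncontrolled $\sigma_k$ or $\sigma_{k-1}$ term without a $|\nabla w|^2$ weight survives. If this fails, my fallback is to bound $\int\sigma_{k-1}(g)e^{2w}dv_g$ via the Guan--Wang Sobolev inequality for $\sigma_{k-1}$ in $\mathcal C_{k-1}$.
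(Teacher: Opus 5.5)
Your frame (integrate \eqref{eq:intsigmak}, drop the divergence terms, keep the nonnegative sum, finish with H\"older) is the paper's. However, the step that makes the proof work is missing, and the substitutes you propose fail.

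Your Young splitting leaves errors $\int_M\sigma_{k-a}(g)e^{2aw}\,dv_g$ with $1\le a\le k-1$. These cannot be absorbed into the good term $\int_M\sigma_{k-a}(g)|\nabla w|_g^{2a}\,dv_g$ of the same index. At a critical point of $w$ the good term vanishes while $\sigma_{k-a}(g)e^{2aw}>0$, and trading $\sigma_{k-a}$ for lower-index quantities pointwise would need Maclaurin in the wrong direction, as you observe. Your fallback also points the wrong way: the Guan--Wang inequality bounds $\int_M\sigma_{k-1}(g)\,dv_g$ from below, whereas you need an upper bound on $\int_M\sigma_{k-1}(g)e^{2w}\,dv_g$.

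The missing idea is an integral identity that lowers the $\sigma$-index. It rests on the Newton tensors $T_j(S_g)$ being divergence free on a locally conformally flat manifold. Write $(k-a)\sigma_{k-a}(g)=(T_{k-a-1}(S_g))^i_j(S_g)^j_i$ and insert $S_g=S_{g_0}+\nabla^2w-dw\otimes dw+\frac12|\nabla w|_g^2\,g$. Multiply by $e^{2aw}$ and integrate the Hessian term by parts using $\nabla^j(T_{k-a-1})^i_j=0$. That term becomes $-2a\int_M T_{k-a-1}(\nabla w,\nabla w)e^{2aw}\,dv_g\le0$, since $T_{k-a-1}$ is positive definite on $\Gamma_{k-1}^+$. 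Young on the remaining terms then gives
\[
\int_M\sigma_{k-a}(g)e^{2aw}\,dv_g\le\varepsilon\int_M\sigma_{k-a-1}(g)|\nabla w|_g^{2(a+1)}\,dv_g+C_\varepsilon\int_M\sigma_{k-a-1}(g)e^{2(a+1)w}\,dv_g ,
\]
which is \eqref{eq:induction formula}. Iterating this down to $\sigma_0$ is what leaves only $\int_M e^{2kw}\,dv_g$; no pointwise argument replaces it.

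The $a=1$ term needs the same device, not an absolute-value bound. Your estimate $|\langle T_{k-1},S_{g_0}\rangle|\le Ce^{2w}(\sigma_{k-1}+\sigma_{k-2}\sigma_1)$ does not close. On $\Gamma_{k-1}^+$ the product $\sigma_1\sigma_{k-2}$ is not bounded by $C\sigma_{k-1}$ (\eqref{eq:alg5} goes the other way). Moreover, $\int_M\sigma_{k-2}(g)\sigma_1(g)e^{2w}\,dv_g$ is dominated by none of your good terms. For $k=2$ the estimate is actually false, since $|S_g|_g$ is not controlled by $\sigma_1(g)$ when only $\sigma_1(g)>0$.

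The paper keeps your splitting $T_{k-1}=\sigma_{k-1}I-T_{k-2}S_g$ and then treats the two pieces as follows. (i) The piece $\int_M\sigma_{k-1}(g)\sigma_1(g_0)e^{2w}\,dv_g\ge0$ is dropped, using the standing assumption $g_0\in\mathcal C_{k-1}$. Alternatively, bound it by $C\int_M\sigma_{k-1}(g)e^{2w}\,dv_g$ and apply the identity above with $a=1$. (ii) In $\langle T_{k-2}S_g,S_{g_0}\rangle$ it substitutes the transformation law for $S_g$. It then integrates the Hessian term by parts against the divergence-free $T_{k-2}$, so the derivative lands on $S_{g_0}$; see \eqref{eq:0}. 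Every resulting term is the positive tensor $T_{k-2}$ times lower-order quantities, hence at most $\varepsilon\int_M\sigma_{k-2}(g)|\nabla w|_g^4\,dv_g+C\int_M\sigma_{k-2}(g)e^{4w}\,dv_g$. The index-lowering identity then finishes the argument.
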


\noindent\textit{Proof of Proposition~\ref{prop6.1}.}
By Corollary~\ref{lowerboundofint} and the Sobolev inequality on $\mathcal C_{k-1}$ proved in \cite{GWDuke}, we obtain
\eq{
\frac{\int_M \sigma_k(g)\,dv_g}{\left(\int_M \sigma_{k-1}(g)\,dv_g\right)^{\frac{n-2k}{n-2(k-1)}}}
=
\frac{\int_M \sigma_k(g)\,dv_g}{\operatorname{Vol}(g)^{\frac{n-2k}{n}}}
\left(\frac{\int_M \sigma_{k-1}(g)\,dv_g}{\operatorname{Vol}(g)^{\frac{n-2(k-1)}{n}}}\right)^{-\frac{n-2k}{n-2(k-1)}}
>-C.
}
\qed


\begin{proof}[Proof of Proposition~\ref{thm:intsigmak}]

Let $g=e^{-2w}g_{0}$, so that $g_{0}=e^{2w}g$.
Then
\begin{equation}\label{conformal change}
(S_{g})_{ij}=(S_{g_{0}})_{ij}+\nabla_{ij}w-w_{i}w_{j}+\frac{|\nabla_{g}w|^{2}}{2}g_{ij}.
\end{equation}
Here $\nabla$ is the Levi-Civita connection with respect to $g$.
In the following proof, we sometimes write $(T_{k-a})^{i}_{j}$ for $(T_{k-a}(S_g))^{i}_{j}$ when no confusion arises.

We begin with the following induction identity:
\begin{align}\label{eq:induction lemma-1}
 & \frac{a}{2^{a-1}}T_{k-a}(S_{g})(\nabla_{g}w,\nabla_{g}w)|\nabla_{g}w|^{2a-2}\\
= &-\frac{1}{2^a}\nabla^j(T_{k-a-1}(S_g)^i_jw_i|\nabla_g w|^{2a})\nonumber\\
& +\frac{a+1}{2^{a}}(T_{k-a-1})_{j}^{i}w^{j}w_{i}|\nabla_{g}w|^{2a}\nonumber\\
&+\frac{k+a}{2^{a}}\sigma_{k-a}(g)|\nabla_{g}w|^{2a} -\frac{(n-k+a+1)}{2^{a+1}}\sigma_{k-a-1}(g)|\nabla_{g}w|^{2a+2}\nonumber \\
 & -\frac{1}{2^{a}}(T_{k-a-1}(S_{g}))_{j}^{i}(S_{g_{0}})_{i}^{j}|\nabla_{g}w|^{2a}-\frac{a}{2^{a-1}}(T_{k-a-1}(S_{g}))_{j}^{i}(S_{g_{0}})_{k}^{j}w^{k}w_{i}|\nabla_{g}w|^{2a-2}.
\nonumber
\end{align}
To obtain \eqref{eq:induction lemma-1}, substituting $\sigma_{k-a}(g)\delta^i_{l}-(T_{k-a-1})_{j}^{i}(S_{g})_{l}^{j}$ into $(T_{k-a})^i_l$ and using \eqref{conformal change},
we have
\begin{align}
 & T_{k-a}(S_{g})(\nabla_{g}w,\nabla_{g}w)|\nabla_{g}w|^{2a-2} \nonumber \\
\overset{\eqref{conformal change}}{=} & \sigma_{k-a}(g)|\nabla_{g}w|^{2a}-(T_{k-a-1})_{j}^{i}w_{l}^{j}w^{l}w_{i}|\nabla_{g}w|^{2a-2}+\frac{1}{2}(T_{k-a-1})_{j}^{i}w^{j}w_{i}|\nabla_{g}w|^{2a}\label{eq:formula 1}\\
 & -(T_{k-a-1}(S_{g}))_{j}^{i}(S_{g_{0}})_{l}^{j}w^{l}w_{i}|\nabla_{g}w|^{2a-2}.\nonumber 
\end{align}
Since $\nabla_i (T_{k-a-1})^i_j=0$, we have
\begin{align*}
 & (T_{k-a-1}(S_{g}))_{j}^{i}w_{k}^{j}w^{k}w_{i}|\nabla_{g}w|^{2a-2}
= \frac{1}{2}(T_{k-a-1})_{j}^{i}\nabla^{j}(|\nabla_{g}w|^{2})w_{i}|\nabla_{g}w|^{2a-2}\\
= & -\frac{1}{2a}(T_{k-a-1})_{j}^{i}\nabla^{j}w_{i}|\nabla_{g}w|^{2a}+\frac{1}{2a}\nabla^j((T_{k-a-1})^i_j w_i|\nabla_g w|^{2a})\\
\overset{\eqref{conformal change}}{=} & -\frac{1}{2a}(T_{k-a-1})_{j}^{i}|\nabla_{g}w|^{2a}\left((S_{g})_{i}^{j}-(S_{g_{0}})_{i}^{j}+w^{j}w_{i}-\frac{|\nabla_{g}w|^{2}}{2}\delta_{i}^{j}\right)\\
&+\frac{1}{2a}\nabla^j((T_{k-a-1})^i_j w_i|\nabla_g w|^{2a})\\
= &\frac{1}{2a}\nabla^j((T_{k-a-1})^i_j w_i|\nabla_g w|^{2a})\\
& +\frac{(n-k+a+1)}{4a}\sigma_{k-a-1}(g)|\nabla_{g}w|^{2a+2}-\frac{k-a}{2a}\sigma_{k-a}(g)|\nabla_{g}w|^{2a}\\
 & -\frac{1}{2a}(T_{k-a-1})_{j}^{i}w^{j}w_{i}|\nabla_{g}w|^{2a}+\frac{1}{2a}(T_{k-a-1})_{j}^{i}(S_{g_{0}})_{i}^{j}|\nabla_{g}w|^{2a}.
\end{align*}
Thus, substituting this into (\ref{eq:formula 1}), we obtain
\begin{align}
 & T_{k-a}(S_{g})(\nabla_{g}w,\nabla_{g}w)|\nabla_{g}w|^{2a-2}\nonumber \\
= &-\frac{1}{2a}\nabla^j((T_{k-a-1})^i_j w_i|\nabla_g w|^{2a})+ \frac{a+1}{2a}(T_{k-a-1})_{j}^{i}w^{j}w_{i}|\nabla_{g}w|^{2a}\nonumber\\
&+\frac{k+a}{2a}\sigma_{k-a}(g)|\nabla_{g}w|^{2a}
 -\frac{(n-k+a+1)}{4a}\sigma_{k-a-1}(g)|\nabla_{g}w|^{2a+2}\nonumber \\
 & -\frac{1}{2a}(T_{k-a-1})_{j}^{i}(S_{g_{0}})_{i}^{j}|\nabla_{g}w|^{2a}-(T_{k-a-1}(S_{g}))_{j}^{i}(S_{g_{0}})_{l}^{j}w^{l}w_{i}|\nabla_{g}w|^{2a-2},\nonumber 
\end{align}
and thus, we deduce \eqref{eq:induction lemma-1}. 

By an induction argument, we further get that 
\begin{align*}
 & k\sigma_{k}(g^{-1}S_{g})=T_{k-1}(S_{g})^{ij}S_{g,ij}\\
\overset{\eqref{conformal change}}{=} & T_{k-1}(S_{g})^{ij}S_{g_{0},ij}-T_{k-1}(S_{g})^{ij}w_{i}w_{j}+(n-k+1)\sigma_{k-1}(g)\frac{|\nabla_{g}w|^{2}}{2}+\nabla_i(T_{k-1}(S_g)^{ij}\nabla_jw)\\
\overset{\eqref{eq:induction lemma-1}}{=}  
&\nabla_i(T_{k-1}(S_g)^{ij}\nabla_jw)+\frac12\nabla^j((T_{k-2}(S_g))^i_jw_i|\nabla_gw|^2)\\
& -(T_{k-2})_{j}^{i}w^{j}w_{i}|\nabla_{g}w|^{2}dv_{g}- \frac{k+1}{2}\sigma_{k-1}(g)|\nabla_{g}w|^{2}\\
 & +\frac{(n-k+2)}{4}\sigma_{k-2}(g)|\nabla_{g}w|^{4}+(n-k+1)\sigma_{k-1}(g)\frac{|\nabla_{g}w|^{2}}{2}\\
 & +T_{k-1}(S_{g})^{ij}S_{g_{0},ij} +\frac{1}{2}(T_{k-2})_{j}^{i}(S_{g_{0}})_{i}^{j}|\nabla_{g}w|^{2}+(T_{k-2})_{j}^{i}(S_{g_{0}})_{l}^{j}w^{l}w_{i}\\
\overset{\eqref{eq:induction lemma-1}}{=} 
&\nabla_i(T_{k-1}(S_g)^{ij}\nabla_jw)+\frac12\nabla^j((T_{k-2}(S_g))^i_jw_i|\nabla_gw|^2)+\frac{1}{2^2}\nabla^j((T_{k-3}(S_g))^i_jw_i|\nabla_g w|^4)\\
& -\frac{3}{4}(T_{k-3})_{j}^{i}w^{j}w_{i}|\nabla_{g}w|^{4}+\frac{(n-k+3)}{2^{3}}\sigma_{k-3}(S_{g})|\nabla_{g}w|^{6}\\
 & +(n-2k)\big(\frac{1}{2}\sigma_{k-1}(g)|\nabla_{g}w|^{2}+\frac{1}{2^{2}}\sigma_{k-2}(g)|\nabla_{g}w|^{4}\big)\\
 & +T_{k-1}(S_{g})^{ij}S_{g_{0},ij}+\frac{1}{2}(T_{k-2})_{j}^{i}(S_{g_{0}})_{i}^{j}|\nabla_{g}w|^{2}+\frac{1}{2^{2}}(T_{k-3})_{j}^{i}(S_{g_{0}})_{i}^{j}|\nabla_{g}w|^{4}\\
 & +(T_{k-2})_{j}^{i}(S_{g_{0}})_{l}^{j}w^{l}w_{i}+(T_{k-3})_{j}^{i}(S_{g_{0}})_{l}^{j}w^{l}w_{i}|\nabla_{g}w|^{2}\\
= & (n-2k)\sum_{j=1}^{k}\frac{\sigma_{k-j}\left(g^{-1}\circ S_{g}\right)}{2^{j}}|\nabla w|_{g}^{2j}+\sum_{a=1}^{k}\frac{1}{2^{a-1}}(T_{k-a}(S_{g}))_{j}^{i}(S_{g_{0}})_{i}^{j}|\nabla_{g}w|^{2(a-1)}\\
 & +\sum_{a=2}^{k}\frac{a-1}{2^{a-2}}(T_{k-a}(S_{g}))_{j}^{i}(S_{g_{0}})_{l}^{j}w^{l}w_{i}|\nabla_{g}w|^{2(a-2)}+\sum_{a=0}^{k-1}\nabla^j(\frac{1}{2^a}(T_{k-a-1}(S_g))^i_jw_i|\nabla_g w|^{2a}).
\end{align*}

\end{proof}

\begin{proof}[Proof of Corollary \ref{lowerboundofint}]
Since $g\in \mathcal C_{k-1}$, the only possible negative terms in  \eqref{eq:intsigmak} are  
$$\int_M(T_{k-1}(S_{g}))_{j}^{i}(S_{g_{0}})_{i}^{j}dv_g$$
and 
$$\sum_{a=2}^{k}\frac{a-1}{2^{a-2}}{\int_{M}(T_{k-a}(S_{g}))_{j}^{i}(S_{g_{0}})_{k}^{j}w^{k}w_{i}|\nabla_{g}w|^{2(a-2)}}dv_g.$$
In the remaining proof, we will deal with these two types of terms.

Recalling that $\sigma_{1}(g_{0})=\sigma_{1}(g_{0}^{-1}S_{g_0})$, we have
\begin{align}
 & \int(T_{k-1}(S_{g}))_{j}^{i}(S_{g_{0}})_{i}^{j}dv_g\nonumber \\
= & \int\sigma_{k-1}(g)\sigma_{1}(g_{0})e^{2w}dv_g-\int(T_{k-2})_{l}^{i}(S_{g})_{j}^{l}(S_{g_{0}})_{i}^{j}\nonumber \\
\overset{\eqref{conformal change}}{=} & \int\sigma_{k-1}(g)\sigma_{1}(g_{0})e^{2w}dv_g-\int(T_{k-2})_{l}^{i}(S_{g_{0}})_{j}^{l}(S_{g_{0}})_{i}^{j}\nonumber \\
 & -\int(T_{k-2})_{l}^{i}\nabla_{j}^{l}w(S_{g_{0}})_{i}^{j}+\int(T_{k-2})_{l}^{i}w^{l}w_{j}(S_{g_{0}})_{i}^{j}-\int(T_{k-2})_{j}^{i}(S_{g_{0}})_{i}^{j}\frac{|\nabla_{g}w|^{2}}{2}\nonumber \\
= & \int\sigma_{k-1}(g)\sigma_{1}(g_{0})e^{2w}dv_g-\int(T_{k-2})_{l}^{i}(S_{g_{0}})_{j}^{l}(S_{g_{0}})_{i}^{j}+\int(T_{k-2})_{l}^{i}\nabla_{j}w\nabla^{l}(S_{g_{0}})_{i}^{j}\label{eq:0}\\
 & +\int(T_{k-2})_{l}^{i}w^{l}w_{j}(S_{g_{0}})_{i}^{j}-\int(T_{k-2})_{j}^{i}(S_{g_{0}})_{i}^{j}\frac{|\nabla_{g}w|^{2}}{2},\nonumber 
\end{align}
where in the last equality we have used $\nabla^l (T_{k-2})^i_l=0$.
Since $T_{k-2}(S_{g})^{i}_{j}$ is positive (because $g\in \mathcal C_{k-1}$), we have $|T_{k-2}(S_{g})^{i}_{j}|\le C\sigma_{k-2}(g)$ for some constant $C>0$.

We now estimate the terms in \eqref{eq:0} one by one.
\begin{align}
 & \int_{M}(T_{k-2})_{l}^{i}(S_{g_{0}})_{j}^{l}(S_{g_{0}})_{i}^{j}\,dv_{g}
\le  C\,\max|S_{g_{0}}|_{g_0}^{2}\int_M e^{4w}\sigma_{k-2}(g)\,dv_{g}.
\label{eq:1}
\end{align}
Next,
\begin{align}
 \int_{M}(T_{k-2})_{j}^{i}(S_{g_{0}})_{i}^{j}\frac{|\nabla_{g}w|^{2}}{2}\,dv_{g}
\le {}& C\int_{M}\sigma_{k-2}(g)|S_{g_{0}}|_{g_{0}}e^{2w}\frac{|\nabla_{g}w|^{2}}{2}\,dv_g\nonumber \\
\le {}& C\max|S_{g_{0}}|_{g_{0}}\left(\int_{M}\sigma_{k-2}(g)|\nabla_{g}w|^{4}\,dv_g\right)^{\frac{1}{2}}\left(\int_M\sigma_{k-2}(g)e^{4w}\,dv_g\right)^{\frac{1}{2}}\nonumber \\
\le {}& \varepsilon\int_{M}\sigma_{k-2}(g)|\nabla_{g}w|^{4}\,dv_g+C\int_M\sigma_{k-2}(g)e^{4w}\,dv_g.
\label{eq:2}
\end{align}
Moreover,
\begin{align}
\bigg|\int_M (T_{k-2})_{l}^{i}w^{l}w_{j}(S_{g_{0}})_{i}^{j}\,dv_g \bigg|
&\le\varepsilon\int_{M}\sigma_{k-2}(g)|\nabla_{g}w|^{4}\,dv_g+C\int_M\sigma_{k-2}(g)e^{4w}\,dv_g
\label{eq:3}
\end{align}
for any $\varepsilon>0$.

Also, by $|\nabla_{g_0}w|e^w=|\nabla_g w|$, we have 
\begin{align}
\bigg|\int_M (T_{k-2})_{l}^{i}\nabla_{j}w\,\nabla^{l}(S_{g_{0}})_{i}^{j}\,dv_g \bigg|
&\le C\max|\nabla_{g_{0}}S_{g_{0}}|\int_M\sigma_{k-2}(g)|\nabla_{g_{0}}w|e^{4w}\,dv_g\nonumber\\
&\quad +C\int_M \sigma_{k-2}(g)|\nabla_g w|^2 e^{2w} dv_g
\nonumber \\
&\le\varepsilon\int_{M}\sigma_{k-2}(g)|\nabla_{g}w|^{4}\,dv_g+C\int_M\sigma_{k-2}(g)e^{4w}\,dv_g.
\label{eq:4}
\end{align}

For any $a=2,\dots,k-1$,
\begin{align}
 & \bigg|\int_{M}(T_{k-a}(S_{g}))_{j}^{i}(S_{g_{0}})_{k}^{j}w^{k}w_{i}|\nabla_{g}w|^{2(a-2)}\,dv_g\bigg|
\nonumber \\
\le {}& C\max|S_{g_{0}}|_{g_{0}}\int_{M}\sigma_{k-a}(g)|\nabla_{g}w|^{2(a-1)}e^{2w}\,dv_{g}
\nonumber \\
\le {}& \varepsilon_{1}\int_{M}\sigma_{k-a}(g)|\nabla_{g}w|^{2a}\,dv_{g}+C_{\varepsilon_{1}}\int_{M}\sigma_{k-a}(g)e^{2aw}\,dv_{g}.
\label{eq:5}
\end{align}
for any $\varepsilon_1>0$.

Since $\nabla^{j}(T_{k-a-1}(S_{g}))_{j}^{i}=0$, we have 
\begin{align}
 (k-a)\int_{M}\sigma_{k-a}(g)e^{2aw}\,dv_{g}
&= \int_M T_{k-a-1}(S_{g})_{j}^{i}\left((S_{g_{0}})_{i}^{j}+\nabla_{i}^{j}w-w_{i}w^{j}+\frac{|\nabla_{g}w|^{2}}{2}\delta_{i}^{j}\right)e^{2aw}\,dv_{g}
\nonumber \\
&\le \varepsilon_{a}\int_{M}\sigma_{k-a-1}(g)|\nabla_{g}w|^{2(a+1)}\,dv_{g}+C_{\varepsilon_{a}}\int_{M}\sigma_{k-a-1}(g)e^{2(a+1)w}\,dv_{g}.
\label{eq:induction formula}
\end{align}
for any $\varepsilon_{a}>0$.
Thus, by \eqref{eq:induction formula} (iterated), we obtain
\begin{equation}
\int_{M}\sigma_{k-a}(g)e^{2aw}\,dv_{g}
\le\varepsilon_{0}\sum_{i=a}^{k-1}\int_{M}\sigma_{k-i-1}(g)|\nabla_{g}w|^{2(i+1)}\,dv_{g}+C\int_{M}e^{2kw}\,dv_{g}.
\label{eq:6}
\end{equation}
In particular,
\begin{equation}
\int_{M}\sigma_{k-2}(g)e^{4w}\,dv_{g}
\le\varepsilon_{0}\sum_{i=2}^{k-1}\int_M\sigma_{k-i-1}(g)|\nabla_{g}w|^{2(i+1)}\,dv_{g}+C\int_{M}e^{2kw}\,dv_{g}.
\label{eq:7}
\end{equation}
Thus, substituting  (\ref{eq:1})-(\ref{eq:4}) and (\ref{eq:7}) into \eqref{eq:0}, we find that for $\varepsilon>0$ and $\varepsilon_0>0$,
\[
\int_M(T_{k-1}(S_{g}))_{j}^{i}(S_{g_{0}})_{i}^{j}\ge-(3\varepsilon+C\varepsilon_{0})\sum_{a=1}^{k-1}\int\sigma_{k-a-1}(g)|\nabla_{g}w|^{2(a+1)}dv_{g}-C_{\varepsilon,\varepsilon_0}\int_{M}e^{2kw}\,dv_{g}.
\]
By \eqref{eq:5}--\eqref{eq:7}, for each $2\le a\le k$ we obtain the estimate
\begin{align*}
\biggl|\int_{M}(T_{k-a}(S_{g}))_{j}^{i}(S_{g_{0}})_{k}^{j}\,w^{k}w_{i}\,|\nabla_{g}w|^{2(a-2)}\,dv_g\biggr|
&\le \varepsilon_{0}\sum_{i=a-1}^{k-1}\int_{M}\sigma_{k-i-1}(S_{g})\,|\nabla_{g}w|^{2(i+1)}\,dv_{g}
+ C\int_{M}e^{2kw}\,dv_{g}.
\end{align*}
Combining this with \eqref{eq:intsigmak} and choosing $\varepsilon_0,\varepsilon>0$ sufficiently small, we conclude that if $k<\frac n2$ then
\begin{equation}\label{can be used to get the information of eigenvalue}
k\int_{M}\sigma_{k}(g^{-1}S_{g})\,dv_{g}
\ge {} \frac{n-2k}{2}\sum_{j=1}^{k}\int_{M}\frac{\sigma_{k-j}\!
\left(g^{-1} S_{g}\right)}{2^{j}}\,|\nabla w|_{g}^{2j}\,dv_g
- C\int_{M}e^{2kw}\,dv_{g}.
\end{equation}
Here we used that, for $a\ge2$,
\[
\int_{M}(T_{k-a}(S_{g}))_{j}^{i}(S_{g_{0}})_{i}^{j}\,|\nabla w|_{g}^{2(a-1)}\,dv_g\ge0,
\]
which follows from G\aa rding's inequality.

If $2k<n$, then
\begin{equation}\label{eq:8}
\int_{M}e^{2kw}\,dv_{g}
\le\left(\int_{M}e^{-nw}\,dv_{g_{0}}\right)^{\frac{n-2k}{n}}\left(\int_{M}1\,dv_{g_{0}}\right)^{\frac{2k}{n}}
\le C\,\operatorname{Vol}(g)^{\frac{n-2k}{n}}.
\end{equation}
Therefore, we prove this Corollary by \eqref{can be used to get the information of eigenvalue}.
\end{proof}

\section{Appendix. Examples}
\label{AppendixA}
In this appendix, we compute explicit examples without  positivity of the scalar curvature.

Let
\[
g_0=g_{\mathbb{S}^{n}}=\frac{1}{1-s^{2}}\,ds^{2}+\left(1-s^{2}\right)g_{\mathbb{S}^{n-1}},
 \qquad s=x_{n+1}.
\]
For a parameter $\ell>0$, consider the conformal metric
\[
 g_{\ell}=e^{-2\ell s^{2}}g_{\mathbb{S}^{n}}.
\]
We recall the identities
\[
|\nabla_{g_{0}}s|^{2}=1-s^{2},
\qquad
\nabla_{g_{0}}^{2}s=-s\,g_{0}.
\]
For a conformal change $g_{u}=e^{-2u}g_{0}$, the Schouten tensor transforms as
\[
S_{g_{u}}=S_{g_{0}}+\nabla^{2}u+du\otimes du-\frac{1}{2}|\nabla_{g_{0}}u|^{2}g_{0}.
\]

Set
\[
 u_{\ell}(s):=\ell s^{2},
 \qquad
 g_{\ell}:=e^{-2u_{\ell}}g_{0}=e^{-2\ell s^{2}}g_{0}.
\]
Then
\[
\nabla_{g_{0}}u_{\ell}=u_{\ell}'\,\nabla_{g_{0}}s,
\qquad
|\nabla_{g_{0}}u_{\ell}|^{2}=(u_{\ell}')^{2}|\nabla_{g_{0}}s|^{2}=4\ell^{2}s^{2}(1-s^{2}),
\]
and
\[
\nabla_{g_{0}}^{2}u_{\ell}=u_{\ell}''\,\nabla_{g_{0}}s\otimes\nabla_{g_{0}}s-s\,u_{\ell}'\,g_{0}.
\]
The eigenvalues of $g_{0}^{-1}S_{g_{\ell}}$ are
\[
\begin{gathered}
\lambda_{1}(s)=\frac{1}{2}+2\ell-4\ell s^{2}+2\ell^{2}s^{2}\left(1-s^{2}\right),\\
\lambda_{2}(s)=\frac{1}{2}-2\ell s^{2}-2\ell^{2}s^{2}\left(1-s^{2}\right),
\end{gathered}
\]
where $\lambda_{2}(s)$ has multiplicity $n-1$.
Hence
\[\s_1(g_{0}^{-1}S_{g_{\ell}})=\frac{n}{2}+2\ell -2\ell (n+1)s^2+(4-2n)\ell ^2s^2(1-s^2)\]
and $\s_1 $ is not everywhere positive, when $\ell$ is big.
\[
\sigma_{2}(g_{0}^{-1}S_{g_{\ell}})=\frac{(n-1)(n-2)}{2}\,\lambda_{2}(s)^{2}+(n-1)\,\lambda_{1}(s)\lambda_{2}(s).
\]

Since
\[
dv_{g_0}=dv_{g_{\mathbb{S}^{n}}}=\left(1-s^{2}\right)^{\frac{n-2}{2}}dv_{g_{\mathbb{S}^{n-1}}}\,ds,
\]
we have
\begin{align*}
&\int_{\mathbb{S}^{n}}\sigma_{2}(g_\ell)\,dv_{g_{\ell}}\\
&=\omega_{n-1}\int_{-1}^{1}e^{-(n-4)\ell s^{2}}
\left(\frac{(n-1)(n-2)}{2}\,\lambda_{2}(s)^{2}+(n-1)\,\lambda_{1}(s)\lambda_{2}(s)\right)
\left(1-s^{2}\right)^{\frac{n-2}{2}}ds\\
&=2\omega_{n-1}\int_{0}^{1}e^{-(n-4)\ell s^{2}}\left(1-s^{2}\right)^{\frac{n-2}{2}}
\left(\frac{1}{2}-2\ell s^{2}-2\ell^{2}s^{2}\left(1-s^{2}\right)\right)\\
&\quad\times\Biggl((n-1)\left(\frac{1}{2}+2\ell-4\ell s^{2}+2\ell^{2}s^{2}\left(1-s^{2}\right)\right)
+\frac{(n-1)(n-2)}{2}\left(\frac{1}{2}-2\ell s^{2}-2\ell^{2}s^{2}\left(1-s^{2}\right)\right)\Biggr)ds.
\end{align*}
And
\begin{align*}
   \frac{1}{2(n-1)} \int_{\mathbb{S}^n}R_{g_\ell } dv_{g_\ell }
    =&2\omega_{n-1}\int_0^1
e^{-(n-2)\ell s^2}(1-s^2)^{\frac{n-2}{2}}\\
&\times\left(\frac{n}{2}+2\ell -2\ell (n+1)s^2+(4-2n)\ell ^2s^2(1-s^2)\right)ds.
\end{align*}

When $n\ge 5$, the examples were discussed in \cite{GWW2} and we showed that 
\[\frac{\int_{\mathbb{S}^{n}}\sigma_{2}(g_\ell )dv_{g_{\ell }}}{Vol(g_\ell )^{\frac{n-4}{n}}}\rightarrow -\infty, \quad \text{as}\quad  \ell \rightarrow +\infty,\]
and 
\[\frac{\int_{\mathbb{S}^{n}}\sigma_{2}(g_\ell )dv_{g_{\ell }}}{(\int_{\mathbb{S}^n}R_
{g_\ell } dv_{g_\ell})^{\frac{n-4}{n-2}}}\rightarrow -\infty, \quad \text{as}\quad  \ell \rightarrow +\infty.\]
Hence the corresponding infimum for both functionals in the whole class $[g_0]$ is $-\infty$.

Here we  check the case $n=3$. 
From above, we can compute
\begin{equation}\label{3-dim sigma2}
\int_{\mathbb{S}^{3}}\sigma_{2}\left(g_{\ell}^{-1}S_{g_{\ell}}\right)dv_{g_{\ell}}=Vol(g_{\mathbb S^2})\frac{9}{2}\sqrt{\pi} e^\ell \sqrt{\ell}+o(e^\ell\sqrt{\ell}),
\end{equation}
and \begin{align}\label{3-dim integral of sigma1}
&\frac{1}{4}\int_{\mathbb{S}^{3}} R_{g_{\ell}} dv_{g_{\ell}}
=Vol(g_{\mathbb S^2})\sqrt{\pi} \sqrt{\ell}+o(\sqrt \ell),
\end{align}
and 
\begin{align}
Vol(g_{\ell})
=Vol(g_{\mathbb S^2})\frac{1}{\sqrt{\ell}}\sqrt{\frac{\pi}{3}}+o(\frac{1}{\sqrt \ell}).
\end{align}
In particular, as $\ell\to\infty$,
$$Vol(g_{\ell})^{1/3}\int_{\mathbb{S}^{3}}\sigma_2(g_{\ell})dv_{g_{\ell}}\to \infty, \quad
\int_{\mathbb{S}^{3}} \sigma_1(g_{\ell}) dv_{g_{\ell}}\int_{\mathbb{S}^{3}} \sigma_2(g_{\ell})dv_{g_{\ell}}\to \infty.$$
Hence both inequalities in Corollary \ref{k=2andn=3} fail without the positivity of the scalar curvature.  
Moreover, as $\ell\to\infty$,
$$\frac{Vol(g_{\ell})\int_{\mathbb{S}^{3}} \sigma_2(g_{\ell})dv_{g_{\ell}}}{(\int_{\mathbb{S}^{3}} \sigma_1(g_{\ell}) dv_{g_{\ell}})^2}\to \infty.$$
Hence, Conjecture~1 in \cite{GWAdv} does not hold in general.

\medskip

\noindent{\it Acknowledgments.}
This work was carried out while W.~Wei was visiting the University of Freiburg, supported by an Alexander von Humboldt research fellowship. She thanks the Institute of Mathematics at the University of Freiburg for its hospitality. She is also partially supported by NSFC (Grant No.~12571218 and 12271244). W.~Wei is grateful to Professor Jingang Xiong for asking, during her talk at Beijing Normal University in January 2026, whether the condition \(g\in\mathcal{C}_{k-2}\) suffices to ensure the positivity of \(\int_M \sigma_k(g)dv_g\). This question motivated the construction of the counterexamples presented here.
\bibliography{bibforSigmakSobolev}
\bibliographystyle{amsplain}

\end{document}